   \def\cB{{\cal B}}
\def\dbE{\mathbb{E}}   \def\cE{{\cal E}}  
\def\dbF{\mathbb{F}}   \def\cF{{\cal F}}  
   \def\cH{{\cal H}}
   \def\cM{{\cal M}}
\def\dbP{\mathbb{P}}   \def\cP{{\cal P}}  
\def\dbR{\mathbb{R}}     
 \def\sT{\mathscr{T}}
\def\ss{\smallskip}      \def\lt{\left}       \def\hb{\hbox}
\def\ms{\medskip}        \def\rt{\right}      \def\ae{\hbox{\rm a.e.}}
\def\bs{\bigskip}        \def\lan{\langle}    \def\as{\hbox{\rm a.s.}}
\def\ds{\displaystyle}   \def\ran{\rangle}    
\def\ts{\textstyle}         
\def\no{\noindent}          
\def\ns{\noalign{\ss}}     \def\esssup{\mathop{\rm esssup}}
\def\rf{\eqref}            
         \def\hp{\hphantom}
\def\deq{\triangleq}     \def\({\Big (}       \def\nn{\nonumber}
\def\les{\leqslant}      \def\){\Big )}       
\def\ges{\geqslant}      \def\[{\Big[}        \def\cl{\overline}
\def\ti{\tilde}          \def\]{\Big]}        
\def\wt{\widetilde}      \def\q{\quad}        
         \def\qq{\qquad}      \def\1n{\negthinspace}
\def\cd{\cdot}           \def\2n{\1n\1n}      
\def\cds{\cdots}         \def\3n{\1n\2n}
\def\a{\alpha}        \def\G{\Gamma}   \def\g{\gamma}   \def\Om{\Omega}   \def\om{\omega}
\def\b{\beta}         \def\D{\Delta}           \def\p{\phi}
\def\z{\zeta}           \def\th{\theta}    \def\si{\sigma}
\def\e{\varepsilon}     \def\l{\lambda}        
    \def\t{\tau}     \def\f{\varphi}  \def\i{\infty}   
\def\ba{\begin{array}}                \def\ea{\end{array}}
\def\bel{\begin{equation}\label}      \def\ee{\end{equation}}
\newtheoremstyle{indented}{}{}{\it}{\parindent}{\bfseries}{.}{.5em}{}
\theoremstyle{indented}
\newtheorem{theorem}{Theorem}[section]
\newtheorem{definition}[theorem]{Definition}
\newtheorem{proposition}[theorem]{Proposition}
\newtheorem{lemma}[theorem]{Lemma}
\newtheorem{remark}[theorem]{Remark}
\newtheorem{example}[theorem]{Example}
\newenvironment{taggedassumption}[1]
 {\taggedassumptionx}
 {\endtaggedassumptionx}
\sloppy  \allowdisplaybreaks[4]
\begin{document}

\title{\bf Recursive Utility Processes, Dynamic Risk Measures \\ and Quadratic Backward\\ Stochastic Volterra Integral Equations}

\author{
Hanxiao Wang\thanks{School of Mathematical Sciences, Fudan University, Shanghai 200433, China
                    ({\tt hxwang14@fudan.edu.cn}). This author is supported in part by the
                    China Scholarship Council, while visiting University of Central Florida.} \and
Jingrui Sun\thanks{Corresponding author. Department of Mathematics, Southern University of Science and Technology, Shenzhen, Guangdong 518055, China
                   ({\tt sunjr@sustech.edu.cn} ).} \and
Jiongmin Yong\thanks{Department of Mathematics, University of Central Florida, Orlando, FL 32816, USA
                     ({\tt jiongmin.yong@ucf.edu}). This author is supported in part by NSF Grant DMS-1812921.}
}

\maketitle

\no\bf Abstract. \rm For an $\cF_T$-measurable payoff of a European type contingent claim, the recursive utility process/dynamic risk measure can be described by the adapted solution to a backward stochastic differential equation (BSDE). However, for an $\cF_T$-measurable stochastic process (called a position process, not necessarily $\dbF$-adapted), mimicking BSDE's approach will lead to a time-inconsistent recursive utility/dynamic risk measure. It is found that a more proper approach is to use the adapted solution to a backward stochastic Volterra integral equation (BSVIE). The corresponding notions are called equilibrium recursive utility and equilibrium dynamic risk measure, respectively. Motivated by this, the current paper is concerned with BSVIEs whose generators are allowed to have quadratic growth (in $Z(t,s)$). The existence and uniqueness for both the so-called adapted solutions and adapted M-solutions are established. A comparison theorem for adapted solutions to the so-called Type-I BSVIEs is established as well. As consequences of these results, some general continuous-time equilibrium dynamic risk measures and equilibrium recursive utility processes are constructed.

\ms

\no\bf Key words. \rm
Backward stochastic Volterra integral equation, quadratic generator, comparison theorem, equilibrium dynamic risk measure, equilibrium recursive utility process,
time-consistency.

\ms

\no\bf AMS subject classifications. \rm 60H10, 60H20, 91B30, 91B70, 91G80.

\section{Introduction}

Let $(\Om,\cF,\dbP)$ be a complete probability space on which a one-dimensional standard Brownian motion $W=\{W(t);0\les t<\i\}$ is defined, with $\dbF=\{\cF_t\}_{t\ges0}$ being the natural filtration of $W$ augmented by all the $\dbP$-null sets in $\cF$. Let $\xi$ be a (random) payoff at some future time $T$ of a certain European type contingent claim, and $c(\cd)$ be a consumption rate. Following \cite{Duffie-Epstein 1992}, we let $Y(\cd)$ solve the following equation:
\bel{BSDE1}Y(t)=\dbE_t\[\xi+\int_t^T\(f(c(s),Y(s))+A(Y(s))Z(s)^2\)ds\],\q t\in[0,T],\ee
hereafter, $\dbE_t[\,\cd\,]=\dbE[\,\cd\,|\,\cF_t]$ is the conditional expectation operator, and $f:\dbR\times\dbR\to\dbR$ is a given map, called the {\it aggregator},
$$Z(t)^2={d\over dt}\lan Y\ran(t),$$
with $t\mapsto\lan Y\ran(t)$ being the quadratic variation process of $Y(\cd)$, and $A(Y(t))$ is called the {\it variance multiplier}. Such defined $Y(\cd)$ is called a {\it recursive utility process} (which has also been called {\it stochastic differential utility process}) of the payoff $\xi$ and the consumption rate $c(\cd)$.  The main feature of such a process $Y(\cd)$ is that the current value $Y(t)$ depends on the future values $Y(s)$, $t<s\les T$ of the process. This notion was firstly introduced by Duffie and Epstein \cite{Duffie-Epstein 1992} in 1992.
It is easy to see that $(Y(\cd),Z(\cd))$ solves \rf{BSDE1} if and only if it is an adapted solution to the following backward stochastic differential equation (BSDE, for short):
\bel{BSDE2}Y(t)=\xi+\int_t^Tg(s,Y(s),Z(s))ds-\int_t^TZ(s)dW(s),\q t\in[0,T],\ee
with
\bel{g=z2}g(s,y,z)=f(c(s),y)+A(y)z^2.\ee
Thanks to the discovery of the relation between \rf{BSDE1} and \rf{BSDE2}, recursive utility process was later extended to the adapted solution of general BSDEs (see \cite{El Karoui-Peng-Quenez 1997,Lazrak-Quenez 2003,Lazrak 2004}).

\ms

Now, if instead of $\xi$, we have an $\cF_T$-measurable process $\psi(t)$, not necessarily $\dbF$-adapted, which is called a {\it position process} (see \cite{Riedel 2004} for a study of discrete-time cases). It could also be called an {\it anticipated wealth flow process}. For example, it could be an anticipated received dividend process of a stock (which depends on the uncertain performance of the company),  anticipated received mortgage payments (for a bank, say, with an uncertainty of default or prepayment), anticipated claim payments of an insurance policy, the random maintenance costs of an owned facility, etc. The feature of such kind of process is that at time $t$, the actually anticipated value of the process is not $\cF_t$-measurable. To ``calculate'' the recursive utility for such a process at the current time $t$, mimicking \rf{BSDE1}, we might formally solve the following BSDE:
%
%
%
\bel{BSDE4}Y(t;r)=\psi(t)+\int_r^Tg(s,Y(t;s),Z(t;s))ds-\int_r^TZ(t;s)dW(s),
\qq r\in[t,T],\ee
with the current time $t$ being a parameter. Intuitively, $Y(t;r)$ should represent the utility of the process $\psi(\cd)$ at a future time $r$, estimated/predicted at the current time $t$. Therefore, the utility at the current time $t$ should be given by $Y(t;t)$. However, by taking $r=t$ in the above, we obtain
\bel{BSDE5}Y(t;t)=\psi(t)+\int_t^Tg(s,Y(t;s),Z(t;s))ds-\int_t^TZ(t;s)dW(s),\q t\in[0,T],\ee
which is not an equation for the process $t\mapsto Y(t;t)$ since $Y(t;s)$ appears on the right-hand side of the above. A careful observation shows that $Y(t;r)$ obtained through \rf{BSDE4} has some time-inconsistent nature, by which we mean the following: If everything is ideal, the value $Y(t;r)$, which is supposed to be the utility of the process $\psi(\cd)$ at a future time $r$ estimated/predicted at the current time $t$ should be equal to $Y(r;r)$, the realistic utility at future time $r$. But this seems to have very little hope. In another word, $t\mapsto Y(t;t)$ determined by a family of BSDEs as above seems not to be a good description of the recursive utility process for the position process $\psi(\cd)$.

\ms

Suggested by \rf{BSDE4}--\rf{BSDE5}, we propose the following modified equation:
\bel{BSVIE1}Y(t)=\psi(t)+\int_t^Tg(s,Y(s),Z(t,s))ds-\int_t^TZ(t,s)dW(s),\qq t\in[0,T].\ee
Note that the above modification is simply to force $Y(t;s)=Y(s;s)$ in \rf{BSDE5}, then rename $Y(t;t)$ to be $Y(t)$. The advantage of such a modification is that as long as a solution $(Y(\cd),Z(\cd\,,\cd))$ of \rf{BSVIE1} exists, $Y(\cd)$ is time-consistent. Then, $Y(\cd)$ could serve as a good description of the recursive utility for the process $\psi(\cd)$ (by suitably selecting the aggregator $g(s,y,z)$). However, a couple of natural questions arise: (i) Is there any convincing mathematical justification for the model \rf{BSVIE1}, and (ii) By ``brutally'' forcing $Y(t;s)=Y(s;s)$, is the resulting equation \rf{BSVIE1}  well-posed? For question (i), we will sketch a convincing argument in the appendix at the end of the paper, justifying our modification. We will borrow some ideas from the study of time-inconsistent optimal control problems (\cite{Yong 2012}). For question (ii), it turns out that \rf{BSVIE1} is nothing but a so-called {\it backward stochastic Volterra integral equation} (BSVIE, for short), which has been studied since the early 2000 for various cases, and the current paper is actually a continuation of those investigations. With the well-posedness of \rf{BSVIE1} (see below for details), the map $t\mapsto Y(t)$ will be called an {\it equilibrium recursive utility process} of $\psi(\cd)$. Interestingly, our mathematical justification presented in the appendix will perfectly justify the word ``equilibrium''.

\ms

BSVIEs have been studied since 2002 (\cite{Lin 2002}). Let us now elaborate a little more on BSVIEs. Let
$$ g:[0,T]^2\times\dbR\times\dbR\times\dbR\times\Om \to\dbR,\qq\psi:[0,T]\times\Om \to\dbR$$
be two given random fields. We consider the following BSVIE:
\bel{bsvie-II} Y(t)=\psi(t) + \int_t^T g(t,s,Y(s),Z(t,s),Z(s,t))ds - \int_t^T Z(t,s)dW(s),\q t\in[0,T]. \ee
By an {\it adapted solution} to BSVIE \rf{bsvie-II}, we mean an $(\dbR\times\dbR)$-valued random field $(Y,Z)=\{(Y(t)$, $Z(t,s));0\les s,t\les T\}$ such that
\begin{enumerate}[(i)]
\item $Y(\cd)$ is $\dbF$-progressively measurable (not necessarily continuous),

\item for each fixed $0\les t\les T$, $Z(t,\cd)$ is $\dbF$-progressively measurable, and

\item equation \rf{bsvie-II} is satisfied in the usual It\^{o} sense for Lebesgue measure almost every $t\in[0,T]$.

\end{enumerate}
Condition (ii) implies that for any $t\in[0,T)$, the random variable $Z(t,s)$ is $\cF_s$-measurable for any $s\in[t,T]$.
In \rf{bsvie-II}, $g$ and $\psi$ are called the {\it generator} and the {\it free term}, respectively.
Let us point out that in this paper, we only study the BSVIEs with $Y(\cd)$ being one-dimensional.
The case that $Y(\cd)$ being higher dimensional will be significantly different in general, and will be investigated in the near future.
However, the Brownian motion $W(\cd)$ assumed to be one-dimensional is just for convenience of our presentation.

\ms

When $Z(s,t)$ is absent, \rf{bsvie-II} is reduced to the form
\bel{bsvie-I}Y(t)=\psi(t)+\int_t^Tg(t,s,Y(s),Z(t,s))ds-\int_t^TZ(t,s)dW(s),\q t\in[0,T], \ee
which is a natural extension of BSDEs, and is a little more general than \eqref{BSVIE1} since $g$ depends on both $t$ and $s$. BSVIEs of form \rf{bsvie-I}, referred to as {\it Type-I BSVIEs}, were firstly studied by Lin \cite{Lin 2002}, followed by several other researchers: Aman and N'Zi \cite{Aman-N'Zi 2005}, Wang and Zhang \cite{Wang-Zhang 2007}, Djordjevi\'{c} and Jankovi\'{c} \cite{Djordjevic-Jankovic 2013,Djordjevic-Jankovic 2015}, Hu and {\O}ksendal \cite{Hu 2018}.

\ms

BSVIEs of the form \rf{bsvie-II} (containing $Z(s,t)$) were firstly introduced by Yong \cite{Yong 2006,Yong 2008}, motivated by the study of optimal control for forward stochastic Volterra integral equations (FSVIEs, for short). We call \rf{bsvie-II} a {\it Type-II BSVIE} to distinguish it from Type-I BSVIEs.
Type-II BSVIE \rf{bsvie-II} has a remarkable feature that its adapted solution, similarly defined as that for Type-I BSVIEs, might not be unique due to lack of restriction on the term $Z(s,t)$ (with $0\les t\les s\les T$).
Suggested by the natural form of the  adjoint equation in the Pontryagin type maximum principle, Yong \cite{Yong 2008} introduced the notion of {\it adapted M-solutions}: A pair $(Y(\cd),Z(\cd\,,\cd))$ is called an adapted M-solution to \rf{bsvie-II}, if in addition to (i)--(iii) stated above, the following condition is also satisfied:
\bel{M-solution} Y(t)=\dbE[Y(t)]+\int_0^tZ(t,s)dW(s),\q\ae~t\in[0,T],~\as \ee
Under usual Lipschitz conditions, well-posedness was established in \cite{Yong 2008} for the adapted M-solutions to Type-II BSVIEs of form \rf{bsvie-II}.
This important development has triggered extensive research on BSVIEs and their applications. For instance, Anh, Grecksch and Yong \cite{Anh-Grecksch-Yong 2011} investigated BSVIEs in Hilbert spaces; Shi, Wang and Yong \cite{Shi-Wang-Yong 2013} studied well-posedness of BSVIEs containing mean-fields (of the unknowns); Ren \cite{Ren 2010}, Wang and Zhang \cite{Wang-Zhang p} discussed BSVIEs with jumps; Overbeck and R\"oder \cite{Overbeck-Roder p} even developed a theory of path-dependent BSVIEs; Numerical aspect was considered by Bender and Pokalyuk \cite{Bender-Pokalyuk 2013}; relevant optimal control problems were studied by Shi, Wang and Yong \cite{Shi-Wang-Yong 2015},
Agram and {\O}ksendal \cite{Agram-Oksendal 2015}, Wang and Zhang \cite{Wang-Zhang 2017}, and Wang \cite{Wang 2018}; Wang and Yong \cite{Wang-Yong 2015} established various comparison theorems for both adapted solutions and adapted M-solutions to BSVIEs in multi-dimensional Euclidean spaces.

\ms

Recently, inspired by the Four-Step Scheme in the theory of forward-backward stochastic differential equations (FBSDEs, for short) (\cite{Ma-Yong 1999}) and the time-inconsistent stochastic optimal control problems (\cite{Yong 2012}), Wang and Yong \cite{Wang-Yong 2018} established a representation of adapted solutions to Type-I BSVIEs and adapted M-solutions to Type-II BSVIEs in terms of the solution to a system of (non-classical) partial differential equations and the solution to a (forward) stochastic differential equation.

\ms

We point out that in all the above-mentioned works on BSVIEs, the generator $g(t,s,y,z,z')$ of the BSVIE \rf{bsvie-II} satisfies a uniform Lipschitz condition in $(y,z,z')$ so that the generator has a linear growth in $(z,z')$.
However, when the generator $g(s,y,z)$ of BSVIE \eqref{BSVIE1} is given by \eqref{g=z2}, it has a quadratic growth in $z$. Hence, a theory needs to be established for BSVIEs with the generators $g(t,s,y,z,z')$ growing quadratically in $z$, which are called quadratic BSVIEs (QBSVIEs, for short, if the quadratic growth of the generator in $z$ needs to be emphasized). We point out that at the moment, we are not able to handle the case that $z'\mapsto g(t,s,y,z,z')$ is quadratic, and it is also lack of motivation for that case.

\ms

Recall that for BSDE \rf{BSDE2}, when $(y,z)\mapsto g(s,y,z)$ satisfies a uniform Lipschitz condition, with $g(\cd\,,0,0)$ being $L^p$-integrable (with some $p>1$), for any $\cF_T$-measurable $L^p$-integrable random variable $\xi$, it admits a unique adapted solution $(Y(\cd),Z(\cd))$ (\cite{Pardoux-Peng 1990, Ma-Yong 1999, Yong-Zhou 1999}) which could be called a recursive utility process for $\xi$. On the other hand, when $z\mapsto g(s,y,z)$ has an up to quadratic growth, the BSDE \rf{BSDE2} is called a {\it quadratic BSDE} (QBSDE, for short).
In 2000, Kobylanski \cite{Kobylanski 2000} established the well-posedness of QBSDE with $\xi$ being bounded. Since then, some efforts have been made by researchers to relax the assumptions on the generator as well as on the terminal value $\xi$. Among relevant works, we would like to mention Briand and Hu \cite{Briand-Hu 2006,Briand-Hu 2008},
Hu and Tang \cite{Hu-Tang 2016}, Briand and Richou \cite{Briand-Richou 2017}, and Zhang \cite[Chapter 7]{Zhang 2017}. Further, BSDEs with superquadratic growth was investigated by Delbaen, Hu and Bao \cite{Delbaen-Hu-Bao 2011}, where some general negative results concerning the well-posedness can be found. Therefore, one can say that the theory of recursive utility for terminal payoff $\xi$ has reached a pretty mature stage.

\ms

The purpose of this paper is to establish the well-posedness of QBSVIEs under certain conditions. The method introduced by Yong \cite{Yong 2008} and techniques found in Briand--Hu \cite{Briand-Hu 2006, Briand-Hu 2008} will be combined and further developed. In addition, a comparison theorem for adapted solutions of Type-I QBSVIEs will be established. Consequently, equilibrium recursive utility processes and continuous-time equilibrium dynamic risk measures will be investigated. See Yong \cite{Yong 2007} and Wang--Yong \cite{Wang-Yong 2015}, Agram \cite{Agram 2018} for some earlier works. See also Di Persio \cite{Di Persio 2014} for stochastic differential utility, and Kromer--Overbeck \cite{Kromer-Overbeck 2017} for dynamical capital allocation by means of BSVIEs.

\ms

The rest of this paper is organized as follows.
In \autoref{Preliminaries},  we introduce some preliminary notations and definitions, and present some lemmas which are of frequent use in the sequel.
\autoref{I-BSVIE} is devoted to the study of existence and uniqueness of adapted solutions for Type-I QBSVIEs, and \autoref{II-BSVIE} is devoted to the study of existence and uniqueness of adapted M-solutions for Type-II QBSVIE.
A comparison theorem for adapted solutions to Type-I QBSVIEs \rf{bsvie-I} will be established in \autoref{Comparison-thm}, and an application of Type-I BSVIEs to continuous-time equilibrium dynamic risk measures will be presented in \autoref{application}. Some conclusion remarks will be collected in \autoref{remarks}.
Finally, a mathematical justification of the BSVIE model is sketched in the appendix.

\section{Preliminaries}\label{Preliminaries}

For $0\les a<b\les T$, we denote by $\cB([a,b])$ the Borel $\si$-field on $[a,b]$
and define the following sets:
\begin{alignat*}{3}
\D[a,b] &\deq\big\{(t,s)\bigm|a\les t\les s\les b\big\},                  \q& \D^c[a,b] &\deq\big\{(t,s)\bigm|a\les s<t\les b\big\},\\
[a,b]^2 &\deq\big\{(t,s)\bigm|a\les t,s\les b\big\}=\D[a,b]\cup\D^c[a,b], \q& \D^*[a,b] &\deq\cl{\D^c[a,b]}.
\end{alignat*}
Note that $\D^*[a,b]$ is a little different from the complement $\D^c[a,b]$ of $\D[a,b]$ in $[a,b]^2$, since both $\D[a,b]$ and $\D^*[a,b]$ contain the diagonal line segment. In the sequel we shall deal with various spaces of functions and processes, which we collect here first
for the convenience of the reader:
\begin{align*}
L^1(a,b)&\ts=\Big\{h:[a,b]\to\dbR~|~h(\cd)~\hb{is $\cB([a,b])$-measurable, }\int_a^b|h(s)|ds<\i\Big\},\\
L^\i_{\cF_b}(\Om)
&\ts=\Big\{\xi:\Om\to\dbR~|~\xi~\hb{is $\cF_b$-measurable and bounded}\Big\},\\
L^\i_{\cF_b}(a,b)
&\ts=\Big\{\f:[a,b]\times\Om\to\dbR~|~\hb{$\f(\cd)$ is $\cB([a,b])\otimes\cF_b$-measurable and bounded}\Big\},\\
L_\dbF^2(a,b)
&\ts=\Big\{\f:[a,b]\times\Om\to\dbR~|~\f(\cd)~\hb{is $\dbF$-progressively measurable, }\dbE\int_a^b|\f(s)|^2ds<\i \Big\},\\
L^\i_\dbF(a,b)
&\ts=\Big\{\f(\cd)\in L^2_\dbF(a,b)\bigm|\hb{$\f(\cd)$ is bounded}\Big\},\\
L_\dbF^2(\Om;C[a,b])
&\ts=\Big\{\f:[a,b]\times\Om\to\dbR~|~\f(\cd)~\hb{is continuous, $\dbF$-adapted, }\dbE\big[\ds\sup_{a\les s\les b}|\f(s)|^2\big]<\i \Big\},\\
L_\dbF^\i(\Om;C[a,b])
&\ts=\Big\{\f(\cd)\in L^2_\dbF(\Om;C[a,b])\bigm|\ds\sup_{a\les t\les b}|\f(t)|\in L^\i_{\cF_b}(\Om)\Big\},\\
L_{\cF_b}^\i(\Om;C^U[a,b])
&\ts=\Big\{\f(\cd)\in L_{\cF_b}^\i(a,b)\bigm|\hbox{there exists a modulus of continuity $\rho:[0,\i)\to[0,\i)$}\\
&\ts\hp{=\Big\{\ } \hbox{such that}~ |\f(t)-\f(s)|\les \rho(|t-s|),~(t,s)\in[a,b],~\as\Big\},\\
L_\dbF^2(\D[a,b])
&\ts=\Big\{\f:\1n\D[a,b]\1n\times\1n\Om\1n\to\1n\dbR~|~\f(t,\cd)~\hb{is $\dbF$-progressively measurable on $[t,b]$, }\ae~t\1n\in\1n[a,b],\\
&\ts\hp{=\Big\{\ }\qq\qq\qq\qq\qq \dbE\int_a^b\int_t^b|\f(t,s)|^2dsdt<\i \Big\},\\
L_\dbF^2([a,b]^2)
&\ts=\Big\{\f:\1n[a,b]^2\1n\times\1n\Om\1n\to\1n\dbR~|~\f(t,\cd)~\hb{is $\dbF$-progressively measurable on $[a,b]$, }\ae~t\1n\in\1n[a,b],\\
&\ts\hp{=\Big\{\ }\qq\qq\qq\qq\qq\dbE\int_a^b\int_a^b|\f(t,s)|^2dsdt<\i \Big\},\\
\cH^2_\D[a,b] &\ts= L_\dbF^2(a,b)\times L_\dbF^2(\D[a,b]),\qq\cH^2[a,b]\ts= L_\dbF^2(a,b)\times L_\dbF^2([a,b]^2).
\end{align*}
Now, we recall the definitions of adapted solutions and adapted M-solutions for Type-I BSVIE \rf{bsvie-I} and Type-II BSVIE \rf{bsvie-II}, respectively (see \cite{Yong 2008}).
\begin{definition}\rm (i) A pair of processes $(Y(\cd),Z(\cd,\cd))\in\cH^2_\D[0,T]$ is called an {\it adapted solution} of BSVIE \rf{bsvie-I} if \rf{bsvie-I} is satisfied in the usual It\^o sense for Lebesgue measure almost every $t\in[0,T]$.

\ms

(ii) A pair of processes $(Y(\cd),Z(\cd,\cd))\in\cH^2[0,T]$ is called an {\it adapted solution} of BSVIE \rf{bsvie-II}
if \rf{bsvie-II} is satisfied in the usual It\^{o} sense for Lebesgue measure almost every $t\in[0,T]$. Further, it is called an {\it adapted M-solution} of BSVIE \rf{bsvie-II} on $[r,T]$ if, in addition, the following holds:
\bel{M1-solution}
Y(s)=\dbE_r[Y(s)]+\int_r^sZ(s,t)dW(t),\q\ae~ s\in[r,T].
\ee
Here, we recall that $\dbE_r=[\,\cd\,|\,\cF_r]$.

\end{definition}
Let $\cM^2[r,T]$ be the set of all $(y(\cd),z(\cd,\cd))\in\cH^2[r,T]$ satisfying  \rf{M1-solution}. Clearly, $\cM^2[r,T]$ is a closed subspace of $\cH^2[r,T]$.
Further, for any $(y(\cd),z(\cd,\cd))\in\cM^2[r,T]$, we have
$$\dbE|y(s)|^2=\dbE\big|\dbE_r[y(s)]\big|^2+\dbE\int_r^s|z(s,t)|^2
dt\ges\dbE\int_r^s|z(s,t)|^2dt,\q\ae~s\in[r,T].$$
It follows that
\begin{align*}
\|(y(\cd),z(\cd,\cd))\|^2_{\cH^2[r,T]}&\equiv\dbE\[\int_r^T|y(s)|^2ds
+\int_r^T\int_r^T|z(s,t)|^2dtds\]\\
&=\dbE\[\int_r^T|y(s)|^2ds+\int_r^T\int_r^s|z(s,t)|^2dtds+\int_r^T
\int_s^T|z(s,t)|^2dtds\]\\
&\les\dbE\[2\int_r^T|y(s)|^2ds+2\int_r^T\int_s^T|z(s,t)|^2dtds\]\\
&\equiv 2\|(y(\cd),z(\cd,\cd))\|_{\cM^2[r,T]}^2\les 2\|(y(\cd),z(\cd,\cd))\|^2_{\cH^2[r,T]},
\end{align*}
which implies that $\| \cd \|_{\cM^2[r,T]}$ is an equivalent norm of $\|\cd\|_{\cH^2[r,T]}$ on $\cM^2[r,T]$.

\ms

Next, we recall the following definition (see \cite{Kazamari 1994} for relevant details).

\begin{definition} \rm
A uniformly integrable $\dbF$-martingale $M=\{M(t):0\les t\les T\}$ with $M(0)=0$ is called a {\it BMO martingale} on $[0,T]$ if
$$ \|M(\cd)\|^2_{{\rm BMO}(0,T)}\deq\sup_{\tau\in\sT[0,T]}\left\|\dbE_\t\big[|M(T)-M(\t)|^2\big]\right\|_\i<\i,$$
where $\sT[0,T]$ is the set of all $\dbF$-stopping times $\t$ valued in $[0,T]$.
\end{definition}

Sometimes, the norm $\|\cd\|_{{\rm BMO}(0,T)}$ is written as  $\|\cd\|_{{\rm BMO}_\dbP(0,T)}$, indicating the dependence on the probability $\dbP$.

\ms

Next, let $X=\{X_t,\cF_t;0\les t\les T\}$ be a measurable, adapted process satisfying
$$\dbP\[\int_0^T|X(s)|^2ds<\i \]=1.$$
Recall the {\it Dol\'ean-Dade exponential} of $X$:
\bel{Girsanov-E}
\cE\{X\}_t\deq e^{\int_0^tX(s)dW(s)-{1\over2}\int_0^t|X(s)|^2ds},\q t\in[0,T],
\ee
and define a probability measure $\cl\dbP$ on $\cF_T$ by
\bel{Girsanov-P}d\cl\dbP=\cE\{X\}_{\1n_T}d\dbP.
\ee
Then, we have the following lemma which is a combination of the Girsanov's theorem (see Karatzas--Shreve \cite{Karatzas-Shreve 2012} for a proof) and a result found in Kazamaki \cite{Kazamari 1994}.

\begin{lemma}\label{lemma-Girsanov} \sl If $t\mapsto\int_0^tX(s)dW(s)$ is a BMO martingale on $[0,T]$, then $\cE\{X\}_t$ is a uniformly integrable martingale and the process $\cl W=\{\cl W(t),\cF_t\bigm|0\les t\les T\}$ defined by
\bel{lemma-Girsanov-tiW}
\cl W(t)\deq W(t)-\int^t_0 X(s)ds,\q0\les t\les T
\ee
is a standard Brownian motion on $(\Om,\cF_T,\cl\dbP)$.
\end{lemma}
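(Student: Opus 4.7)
The plan is to split the statement into two parts: first, establish that the positive continuous local martingale $\cE\{X\}$ is in fact a uniformly integrable $\dbF$-martingale; then, invoke the classical Girsanov transformation to identify $\cl W$ as a Brownian motion under $\cl\dbP$. The non-trivial step is the first one (the content of Kazamaki's theorem); the second step follows almost automatically via Lévy's characterization.

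For the first step, write $M(t)=\int_0^tX(s)dW(s)$, so that $\cE\{X\}_t=\exp(M(t)-\tfrac12\lan M\ran(t))$ is a positive local martingale, hence a supermartingale. It therefore suffices to prove $\dbE[\cE\{X\}_T]=1$, and the efficient route is through a uniform reverse Hölder inequality for $\cE\{X\}$. The key analytic ingredient is the John--Nirenberg inequality for BMO martingales: there exist constants $c_0>0$ and $K<\i$, depending only on $\|M\|_{{\rm BMO}(0,T)}$, such that
$$\dbE_\t\big[\exp\big(c_0(\lan M\ran(T)-\lan M\ran(\t))\big)\big]\les K,\qquad\as,\ \forall\,\t\in\sT[0,T].$$
From this exponential integrability (after a standard splitting $(\cE\{X\}_T/\cE\{X\}_\t)^p=\cE\{pX\}_T/\cE\{pX\}_\t\cdot\exp(\tfrac{p^2-p}{2}(\lan M\ran(T)-\lan M\ran(\t)))$ and Cauchy--Schwarz), one obtains that there exists $p>1$, depending only on $\|M\|_{{\rm BMO}(0,T)}$, such that
$$\dbE_\t\big[(\cE\{X\}_T/\cE\{X\}_\t)^p\big]\les C,\qquad\as,\ \forall\,\t\in\sT[0,T].$$
In particular the family $\{\cE\{X\}_\t:\t\in\sT[0,T]\}$ is uniformly integrable, and the supermartingale property combined with UI forces $\dbE[\cE\{X\}_T]=1$. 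Hence $\cE\{X\}$ is a uniformly integrable martingale.

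For the second step, once $\cE\{X\}$ is a uniformly integrable martingale, formula \eqref{Girsanov-P} defines a genuine probability measure $\cl\dbP$ on $\cF_T$ equivalent to $\dbP$. The classical Girsanov theorem then yields that $\cl W(t)=W(t)-\int_0^tX(s)ds$ is a continuous local $(\dbF,\cl\dbP)$-martingale. Since quadratic variation is invariant under equivalent changes of measure, $\lan\cl W\ran(t)=t$, and Lévy's characterization identifies $\cl W$ as a standard Brownian motion on $(\Om,\cF_T,\cl\dbP)$. The main obstacle is thus concentrated entirely in the first step: Novikov's criterion would not suffice here, because the BMO hypothesis on $M$ is strictly weaker than $\dbE[\exp(\tfrac12\int_0^T|X(s)|^2ds)]<\i$, so the John--Nirenberg/reverse-Hölder argument of Kazamaki is indispensable.
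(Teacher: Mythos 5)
Your proposal is correct, and in substance it coincides with what the paper does: the paper gives no proof of \autoref{lemma-Girsanov} at all, but simply declares it to be ``a combination of the Girsanov theorem and a result found in Kazamaki'' --- and your argument is exactly the standard proof of that Kazamaki ingredient (John--Nirenberg $\Rightarrow$ reverse H\"older $\Rightarrow$ uniform integrability of $\cE\{X\}$, hence $\dbE[\cE\{X\}_T]=1$ for the positive supermartingale $\cE\{X\}$), followed by the classical Girsanov transformation and L\'evy's characterization. So you have not taken a different route; you have filled in the citation. One small technical caveat: the Cauchy--Schwarz step as literally written does not close for an arbitrary BMO norm, since the first factor $\dbE_\t\big[(\cE\{pX\}_T/\cE\{pX\}_\t)^2\big]=\dbE_\t\big[\cE\{2pX\}_T/\cE\{2pX\}_\t\cdot e^{p^2(\lan M\ran(T)-\lan M\ran(\t))}\big]$ carries the exponent $p^2\approx 1$ on the bracket increment, which John--Nirenberg only controls when $c_0\ges 1$, i.e.\ when $\|M\|_{{\rm BMO}(0,T)}$ is small. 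The standard fix (and what Kazamaki actually does in his Theorem 3.1) is to use H\"older with conjugate exponents $(r,r')$ chosen jointly with $p$ so that the Dol\'ean--Dade factor appears with expectation $\les1$ by the supermartingale property and the residual exponential carries a coefficient below the John--Nirenberg threshold; your conclusion --- a reverse H\"older inequality with $p>1$ depending only on the BMO norm --- is then exactly right. You are also right that Novikov's criterion is not available here, which is why the BMO machinery is genuinely needed.
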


Next, we introduce the following spaces. Let $0\les a< b< c\les T$, and
\begin{align*}
&\cl{\rm BMO}(a,b)=\Big\{\f:[a,b]\times\Om\to\dbR~\big|~\f(\cd)\in L_\dbF^2(a,b), \\
&\q \qq\qq\qq\qq \|\f(\cd)\|^2_{\cl{\rm BMO}(a,b)}\deq\sup_{\t\in\sT[a,b]}\Big\|\dbE_\t\[\int_\t^b|\f(s)|^2ds\,
\]\Big\|_\i<\i \Big\},\\
& \cl{\rm BMO}(\D[a,b])=\Big\{\f:\D[a,b]\times\Om\to\dbR~\big|~\f(\cd,\cd)\in  L_\dbF^2(\D[a,b]), \\
&\q \qq\qq\qq\qq \|\f(\cd,\cd)\|^2_{\cl{\rm BMO}\big(\D[a,b]\big)}\deq\esssup_{t\in[a,b]}\sup_{\t\in\sT[t,b]}\,\Big\|\,\dbE_\t
\[\int_\t^b|\f(t,s)|^2ds\]\Big\|_\i<\i \Big\},\\
& \cl{\rm BMO}\big([a,b]\times[b,c]\big)=\Big\{\f:[a,b]\times[b,c]\times\Om\to\dbR~\big|~\f(\cd,\cd)\in  L_\dbF^2([a,b]\times[b,c]), \\
&\q \qq\qq\qq\qq \|\f(\cd,\cd)\|^2_{\cl{\rm BMO}([a,b]\times[b,c])}\deq\esssup_{t\in[a,b]}\sup_{\t\in\sT[b,c]}\,\Big\|\,\dbE_\t
\[\int_\t^c|\f(t,s)|^2ds\]\Big\|_\i<\i \Big\}.
\end{align*}
We note that for $\f(\cd)\in\cl{\rm BMO}(a,b)$, if we let $\f(s)\equiv 0,~s\in[0,a)$, then $\int_0^s\f(r)dW(r);~0\les s\les b$ is a BMO martingale on $[0,b]$.
Similarly, for $\f(\cd\,,\cd)\in\cl{\rm BMO}(\D[a,b])$, if we let $\f(t,s)\equiv 0,~s\in[0,t)$, then $\int_0^s\f(t,r)dW(r);~0\les s\les b$ is a BMO martingale on $[0,b]$ for almost all $t\in[a,b)$. The situation for $\cl{\rm BMO}\big([a,b]\times[b,c]\big)$ is also similar. The following lemma plays a basic role in our subsequent arguments. We refer the reader to \cite[Theorem 3.3]{Kazamari 1994} for the proof and details.

\begin{lemma}\label{lemma-BMO} \sl
For $K>0$, there are constants $c_1,c_2>0$ depending only on K such that for any BMO martingale $M(\cd)$,
we have for any one-dimensional BMO martingale $N(\cd)$ such that $\|N(\cd)\|_{{\rm BMO}(0,T)}\les K$,
$$
c_1\|M(\cd)\|_{{\rm BMO}_{\dbP}(0,T)}\les\|\cl M(\cd)\|_{{\rm BMO}_{\cl \dbP}(0,T)}\les c_2\|M(\cd)\|_{{\rm BMO}_{\dbP}(0,T)},
$$
where $\cl M(\cd)\deq M(\cd)-\lan M, N\ran(\cd)$ and $d\cl\dbP=\bar\cE\{N(\cd)\}_{_T}d\dbP$.
\end{lemma}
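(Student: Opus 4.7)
The plan is to follow the Kazamaki-style argument via the reverse Hölder inequality for stochastic exponentials, exploiting the uniform BMO bound on $N(\cdot)$ to produce constants $c_1,c_2$ depending only on $K$. First I would invoke the key structural fact: since $\|N(\cd)\|_{{\rm BMO}_\dbP(0,T)}\les K$, Kazamaki's criterion guarantees that $\cE\{N\}_\cd$ is a uniformly integrable $\dbP$-martingale, so $\bar\dbP$ is a bona fide probability; moreover there exists $p=p(K)>1$ such that $\cE\{N\}$ satisfies the reverse Hölder inequality $R_p(\dbP)$, i.e.\ for every $\t\in\sT[0,T]$,
$$\dbE_\t\[\lt({\cE\{N\}_T\over\cE\{N\}_\t}\rt)^{\!p}\]\les C_p,$$
with $C_p$ depending only on $K$. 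By the same token, applied now to $-N$ under the measure $\bar\dbP$, there is $p'=p'(K)>1$ with $\dbE^{\bar\dbP}_\t[(\cE\{N\}_\t/\cE\{N\}_T)^{p'}]\les C_{p'}$.

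Next I would translate expectations across the two measures. By Girsanov, $\bar M(\cd)=M(\cd)-\lan M,N\ran(\cd)$ is a local $\bar\dbP$-martingale, and for any $\t\in\sT[0,T]$,
$$\dbE^{\bar\dbP}_\t\[|\bar M(T)-\bar M(\t)|^2\]=\dbE_\t\[{\cE\{N\}_T\over\cE\{N\}_\t}|\bar M(T)-\bar M(\t)|^2\].$$
Applying Hölder with conjugate exponents $(p,q)$ and the reverse Hölder bound above gives
$$\dbE^{\bar\dbP}_\t\[|\bar M(T)-\bar M(\t)|^2\]\les C_p^{1/p}\,\Bigl(\dbE_\t\[|\bar M(T)-\bar M(\t)|^{2q}\]\Bigr)^{1/q}.$$
To conclude the upper estimate $\|\bar M\|_{{\rm BMO}_{\bar\dbP}}\les c_2\|M\|_{{\rm BMO}_\dbP}$, I would bound the right-hand side in terms of $\|M\|_{{\rm BMO}_\dbP}$. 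This uses the energy/John--Nirenberg inequality for BMO martingales: for any BMO martingale $L$ and $k\les \|L\|_{{\rm BMO}}^{-2}$ (up to a universal constant), $\dbE_\t[|L(T)-L(\t)|^{2k}]\les C_k\|L\|_{{\rm BMO}}^{2k}$. Choosing $q=q(K)$ small enough (since $p=p(K)$, we may take $q$ as large as needed only up to a $K$-dependent threshold, so this is fine) one bounds $\dbE_\t[|M(T)-M(\t)|^{2q}]$ by $\|M\|^{2q}_{{\rm BMO}_\dbP}$. The correction term requires a Kunita--Watanabe/energy inequality controlling $\lan M,N\ran(T)-\lan M,N\ran(\t)$ pathwise by a product of quadratic variations, whose $2q$-moments are then estimated by $\|M\|_{{\rm BMO}_\dbP}\|N\|_{{\rm BMO}_\dbP}\les K\|M\|_{{\rm BMO}_\dbP}$. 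Combining yields a $K$-dependent constant $c_2$.

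For the lower inequality $c_1\|M\|_{{\rm BMO}_\dbP}\les\|\bar M\|_{{\rm BMO}_{\bar\dbP}}$ I would run the same argument backwards: observe that, under $\bar\dbP$, the process $\bar W(t)=W(t)-\int_0^tX^N(s)ds$ (with $X^N$ the integrand of $N$) is Brownian and $M(\cd)=\bar M(\cd)+\lan\bar M,-N\ran^{\bar\dbP}(\cd)$ is recovered from $\bar M$ by precisely the reverse Girsanov transformation driven by $-N$, whose BMO$_{\bar\dbP}$ norm is again controlled in terms of $K$ by the forward direction (which gives $\|N\|_{{\rm BMO}_{\bar\dbP}}\les c_2(K)\|N\|_{{\rm BMO}_\dbP}\les c_2(K)K$). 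Applying the upper estimate we just proved, now with the roles of $\dbP$ and $\bar\dbP$ reversed and with the exponent $p'(K)$ from the reverse Hölder inequality of $1/\cE\{N\}$ under $\bar\dbP$, yields $\|M\|_{{\rm BMO}_\dbP}\les (c_1)^{-1}\|\bar M\|_{{\rm BMO}_{\bar\dbP}}$ with $c_1=c_1(K)>0$.

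The main obstacle, and the only delicate part, is tracking that every constant produced depends only on $K$: this ties together (a) the quantitative version of Kazamaki's criterion (which gives $p(K)$), (b) the John--Nirenberg/energy inequality with the admissible exponent window determined by $K$, and (c) the fact that $\|N\|_{{\rm BMO}_{\bar\dbP}}$ remains controlled by a function of $K$, so that the backward Girsanov step does not enlarge the constant in an uncontrolled way. These ingredients are exactly what is established in Kazamaki's monograph; a direct invocation of Theorem~3.3 there delivers the claim.
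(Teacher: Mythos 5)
Your proposal is correct and ultimately rests on exactly the same foundation as the paper: the paper offers no internal proof of this lemma and simply cites Kazamaki's Theorem~3.3, which is precisely the result your sketch reconstructs (reverse H\"older inequality for $\cE\{N\}$ with exponent $p(K)>1$ depending only on $K$, the Bayes change-of-conditional-expectation formula, Kunita--Watanabe plus energy/John--Nirenberg moment bounds, and the reverse Girsanov step for the lower inequality). The only blemish is the phrase ``choosing $q$ small enough'': the conjugate exponent $q=p/(p-1)$ is in fact forced to be \emph{large} when $p(K)$ is close to $1$, but since the energy inequalities bound all polynomial moments of a BMO martingale in terms of its BMO norm, this does not affect the argument.
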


We now consider the following BSDE:
\bel{pre-bsde-1-d}
Y(t)=\xi+\int_t^T f(s,Y(s),Z(s))ds-\int_t^T Z(s)dW(s),\q t\in[0,T].
\ee
Let us introduce the following hypothesis.

\begin{taggedassumption}{(A0)}\label{A0} \rm Let the generator $f:[0,T]\times\dbR\times\dbR\times\Om\to\dbR$ be $\cB([0,T]\times \dbR\times\dbR)\otimes\cF_T$-measurable
such that $s\mapsto f(s,y,z)$ is $\dbF$-progressively measurable for all $(y,z)\in \dbR\times\dbR$. There exist constants $\b$, $\g$, $L$ and a function $h(\cd)\in L^1(0,T)$ such that
\begin{align}
\label{|f|}|f(s,y,z)|\les h(s)+\b|y|+{\g\over 2}|z|^{2},\q (s,y,z)\in[0,T]\times\dbR\times \dbR;\\
\nn|f(s,y_1,z_1)-f(s,y_2,z_2)|\les L|y_1-y_2|+L(1+|z_1|+|z_2|)|z_1-z_2|,\\
\label{|f-f|}\qq\qq\qq\qq\qq\qq (s,y_i,z_i)\in[0,T]\times\dbR\times\dbR,~i=1,2.
\end{align}
\end{taggedassumption}

\begin{lemma}\label{lemma-briand-hu} \sl
Let {\rm\ref{A0}} hold. Then, for any $\xi\in L^\i_{\cF_T}(\Om)$, BSDE \rf{pre-bsde-1-d} admits a unique adapted solution $(Y(\cd),Z(\cd))\in L_\dbF^\i(\Om;$ $C[0,T])\times\cl{\rm BMO}(0,T)$. Moreover,
\bel{lemma-briand-hu-main*}
e^{\g|Y(t)|}\les\dbE_t\[e^{\g e^{\b(T-t)}|\xi|+\g\int_t^T|h(s)|e^{\b(s-t)}ds}\].
\ee
\end{lemma}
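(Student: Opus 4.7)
The plan is to follow the classical Kobylanski--Briand--Hu program for quadratic BSDEs. First, I would approximate $f$ by a monotone sequence of globally Lipschitz generators $f_n$, obtained for instance by truncating the quadratic growth in $z$ at level $n$ via a smooth cutoff $\pi_n$ (say $f_n(s,y,z):=f(s,y,\pi_n(z))$), in such a way that each $f_n$ still satisfies \eqref{|f|} with the same $h,\b,\g$, and $f_n\to f$ monotonically. By the classical theory of Lipschitz BSDEs \cite{Pardoux-Peng 1990}, each approximating equation admits a unique adapted solution $(Y^n,Z^n)\in L_\dbF^\i(\Om;C[0,T])\times L_\dbF^2(0,T)$. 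The task is then to derive uniform-in-$n$ estimates, extract a limit, and check the limit is an adapted solution of \eqref{pre-bsde-1-d}.

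The key step is the exponential a priori bound \eqref{lemma-briand-hu-main*} for the approximants, from which the uniform $L^\i$ bound on $Y^n$ is immediate. Introduce the deterministic functions $B(t):=e^{\b(T-t)}$ and $A(t):=\int_t^T|h(s)|e^{\b(s-t)}ds$, which satisfy $\dot B=-\b B$ and $\dot A=-|h|-\b A$, and apply It\^o's formula (via Tanaka) to $\Psi^n(t):=\exp\bigl(\g B(t)|Y^n(t)|+\g A(t)\bigr)$. The choices of $B$ and $A$ are designed so that, after using \eqref{|f|} to dominate $\text{sgn}(Y^n)f_n$, the $\b|Y^n|$ contributions from the bound on $|f_n|$ cancel against $\dot B|Y^n|$, and the $|h|$ contributions are absorbed by $\dot A$, leaving only a positive multiple of $|Z^n|^2$ in the drift. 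Passing to the equivalent measure $\wt\dbP^n$ via the Girsanov transform driven by $-\g B\,\text{sgn}(Y^n)Z^n$ -- which is legitimate by Lemma \ref{lemma-Girsanov} once one knows $Z^n\in\cl{\rm BMO}(0,T)$ from classical Lipschitz theory -- the It\^o correction $\tfrac12\g^2B^2|Z^n|^2$ is precisely cancelled by the quadratic drift produced by the change of measure, rendering $\Psi^n$ a $\wt\dbP^n$-submartingale. The conditional submartingale inequality between $t$ and $T$, together with Lemma \ref{lemma-BMO} to transfer the BMO-controlled bound back to $\dbP$, yields \eqref{lemma-briand-hu-main*} uniformly in $n$; boundedness of $\xi$ and integrability of $h$ then give $\sup_n\|Y^n\|_{L^\i}\les K$.

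Given the uniform $L^\i$ bound on $Y^n$, applying It\^o's formula to $e^{2\g|Y^n|}$ and using \eqref{|f|} produces the uniform control $\sup_{\t\in\sT[0,T]}\|\dbE_\t[\int_\t^T|Z^n|^2ds]\|_\i\les C$, i.e.\ $\sup_n\|Z^n\|_{\cl{\rm BMO}(0,T)}\les C$. Monotonicity of $(f_n)$ together with the comparison principle for Lipschitz BSDEs makes $(Y^n)$ monotone, hence pointwise convergent to some bounded $Y$, and Kobylanski's monotone stability argument (cf.\ \cite{Kobylanski 2000, Briand-Hu 2006}), combined with the uniform BMO bound, gives $Z^n\to Z$ in $L_\dbF^2(0,T)$ with $Z\in\cl{\rm BMO}(0,T)$; one checks that $(Y,Z)$ solves \eqref{pre-bsde-1-d} and inherits \eqref{lemma-briand-hu-main*} in the limit. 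For uniqueness, given two solutions $(Y,Z),(Y',Z')$ in the announced class, the local Lipschitz condition \eqref{|f-f|} lets one write $f(s,Y,Z)-f(s,Y',Z')=\l(s)(Y-Y')+\m(s)(Z-Z')$ with $|\m(s)|\les L(1+|Z(s)|+|Z'(s)|)$, so $\int_0^{\cd}\m\,dW$ is a BMO martingale by the a priori bound; Girsanov reduces the equation for $Y-Y'$ to a linear BSDE with vanishing terminal data, forcing $Y\equiv Y'$ and then $Z\equiv Z'$. The principal technical obstacle is the joint design of the exponential test function and the accompanying change of measure in the a priori step, so that the two $|Z^n|^2$ contributions (one from It\^o's correction, one from the quadratic growth of $f$) combine with compatible signs, and so that all relevant BMO norms survive the change of probability through Lemma \ref{lemma-BMO}.
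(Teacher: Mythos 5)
Your overall program (truncate, derive uniform exponential and BMO estimates, pass to the limit, linearize for uniqueness) is the standard Kobylanski--Briand--Hu route and is in principle viable; note, though, that the paper itself does none of this work --- it simply quotes \cite[Theorems 7.2.1 and 7.3.3]{Zhang 2017} for existence, uniqueness and the BMO property, and \cite[Proposition 1]{Briand-Hu 2008} for the estimate \rf{lemma-briand-hu-main*}. The problem is that your central step, the derivation of the a priori bound, does not work as written. Your weights are time-reversed: with $B(s)=e^{\b(T-s)}$ one has $\dot B=-\b B$, so after bounding $-\hbox{sgn}(Y^n)f_n\ges -h-\b|Y^n|-\frac{\g}{2}|Z^n|^2$ the two $|Y^n|$-terms in the drift of $\g B|Y^n|$ combine to $\g(\dot B-\b B)|Y^n|=-2\g\b B|Y^n|$ rather than cancelling, and likewise $\dot A=-|h|-\b A$ cannot absorb the term $-\g B|h|$ (which carries the extra factor $B$). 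Moreover, even if your $\Psi^n$ were a submartingale, the resulting inequality $\Psi^n(t)\les\dbE_t[\Psi^n(T)]$ would read $e^{\g e^{\b(T-t)}|Y^n(t)|+\g A(t)}\les\dbE_t[e^{\g|\xi|}]$, which is not \rf{lemma-briand-hu-main*} and is in fact false already for $f=\b y$ with constant $\xi>0$. The correct test process, for fixed $t$, is $s\mapsto\exp\big(\g e^{\b(s-t)}|Y(s)|+\g\int_t^s e^{\b(r-t)}|h(r)|dr\big)$ on $[t,T]$: it starts at $e^{\g|Y(t)|}$, ends at the integrand on the right-hand side of \rf{lemma-briand-hu-main*}, and its weight $c(s)=e^{\b(s-t)}$ satisfies $\dot c=\b c$ and $c\ges 1$, which is exactly what makes the drift nonnegative.

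Second, your Girsanov step is both unnecessary and wrong-signed. If the change of measure really cancelled the It\^o correction $\frac12\g^2B^2|Z^n|^2$, the drift would be left with the uncompensated term $-\frac{\g^2}{2}B|Z^n|^2$ coming from the quadratic growth of $f$, and $\Psi^n$ would not be a submartingale. In the Briand--Hu argument no change of measure is used: the It\^o correction $\frac12\g^2c^2|Z|^2$ is kept and dominates $\frac12\g^2c|Z|^2$ precisely because $c\ges1$; the Tanaka local time and the boundedness of $Y$ then give a genuine submartingale and hence \rf{lemma-briand-hu-main*}. A smaller issue: the truncation $f_n(s,y,z)=f(s,y,\pi_n(z))$ is generally not monotone in $n$, so you cannot invoke monotone stability with that construction; either use inf-convolutions, or (simpler here) exploit the local Lipschitz condition \rf{|f-f|} together with the uniform BMO bound to show $(Y^n,Z^n)$ is Cauchy directly. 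Your uniqueness argument via linearization and Girsanov is fine.
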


\begin{proof} \rm By \cite[Theorem 7.3.3]{Zhang 2017}, BSDE \rf{pre-bsde-1-d} admits a unique adapted solution $(Y(\cd),Z(\cd))\in L^\infty_\dbF(\Om;C[0,T])\times L^2_\dbF(0,T)$. Then, by \cite[Theorem 7.2.1]{Zhang 2017}, we see that the adapted solution $(Y(\cd),$ $Z(\cd))\in L^\infty_\dbF(\Om;C[0,T])\times\cl{\rm BMO}(0,T)$. Further, by \cite[Proposition 1]{Briand-Hu 2008}, we have inequality \rf{lemma-briand-hu-main*}.
\end{proof}


\section{Adapted Solution to Type-I QBSVIE}\label{I-BSVIE}

In this section, we will establish the existence and uniqueness of the adapted solution to Type-I QBSVIE. Keep in mind that we may just use ``BSVIE'', instead of ``Type-I QBSVIE'', for convenience. First, let us look at the following simple example.

\begin{example}\rm
Consider the one-dimensional BSVIE:
\bel{ex-3-1} Y(t)=\psi(t)+\int_t^T{Z(t,s)^2\over 2}ds-\int_t^TZ(t,s)dW(s), \ee
where $\psi(\cd)\in L^\i_{\cF_T}(0,T)$, and $W(\cd)$ is a one-dimensional standard Brownian motion. In order to solve  equation \rf{ex-3-1}, we introduce a family of BSDEs parameterized by $t\in[0,T]$:
\bel{ex-3-2} \eta(t,s)=\psi(t)+\int_s^T {\z(t,r)^2\over 2}dr-\int_s^T\z(t,r)dW(r),\q s\in[t,T]. \ee
By \autoref{lemma-briand-hu}, BSDE \rf{ex-3-2} admits a unique adapted solution
$(\eta(t,\cd),\z(t,\cd))\in L_\dbF^\i(\Om;C[t,T])\times\cl{\rm BMO}(t,T)$.
Let
$$ Y(t)=\eta(t,t) ~\hbox{and} ~Z(t,s)=\zeta(t,s),\q (t,s)\in\D[0,T], $$
then
$$ Y(t)=\psi(t)+\int_t^T {Z(t,s)^2\over 2}ds-\int_t^T Z(t,s)dW(s),\q t\in[0,T], $$
which implies that $(Y(\cd),Z(\cd,\cd))$ is an adapted solution to BSVIE \rf{ex-3-1}.
The uniqueness of the solutions to BSVIE \rf{ex-3-1} can be obtained by the following \autoref{thm-exist-unique-no-y}. Moreover, the first term $Y(\cd)$ of the unique solution to BSVIE \rf{ex-3-1} could be solved explicitly:
\bel{Y-ex}
Y(t)=\ln\{\dbE[e^{\psi(t)}|\cF_t]\},\q t\in[0,T].
\ee
\end{example}

Clearly, from the expression \rf{Y-ex}, we see that as long as
$$\sup_{t\in[0,T]}\dbE\[e^{\psi(t)}\]<\infty,$$
by a usual approximation technique, one could find that BSVIE \rf{ex-3-1} will still have the adapted solution with $Y(\cd)$  given by \rf{Y-ex}. Some general exploration in this direction will be carried out elsewhere.

\ms

From the above example, we see that BSVIE \rf{ex-3-1} can be fully characterized by a family of BSDEs \rf{ex-3-2}. The main reason is that the generator of equation \rf{ex-3-1} is independent of $y$. This suggests us first consider a special case of Type-I QBSVIE \rf{bsvie-I}.

\subsection{A special case}

Consider the following BSVIE:
\bel{bsvie-no-y}
Y(t)=\psi(t)+\int_t^T g(t,s,Z(t,s))ds-\int_t^T Z(t,s)dW(s),
\ee
where the generator $g:\D[0,T]\times\dbR\times\Om\to\dbR$ and the free term $\psi:[0,T]\times\Om\to\dbR$  are given maps.
We adopt the following assumption concerning $g(\cd)$, which is comparable with \ref{A0}.
\begin{taggedassumption}{(A1)}\label{A1} \rm Let the generator $g:\D[0,T]\times \dbR\times \Om\to\dbR$ be $\cB(\D[0,T]\times \dbR)\otimes\cF_T$-measurable such that $s\mapsto g(t,s,z)$ is $\dbF$-progressively measurable on $[t,T]$, for all $(t,z)\in [0,T)\times\dbR$. There exist two constants $\g$, $L$ and a function $h(\cd)\in L^1(0,T;\dbR)$ such that
\begin{align*}
&|g(t,s,z)|\les h(s)+{\g\over 2}|z|^{2},\q (t,s,z)\in\D[0,T]\times \dbR;\\
&|g(t,s,z_1)-g(t,s,z_2)|\les L(1+|z_1|+|z_2|)|z_1-z_2|,\q (t,s,z_i)\in\D[0,T]\times\dbR,~i=1,2.
\end{align*}
\end{taggedassumption}

\noindent Now, we state the following existence and uniqueness result of BSVIE \rf{bsvie-no-y}.

\begin{theorem}\label{thm-exist-unique-no-y} \sl
Let {\rm\ref{A1}} hold. Then for any $\psi(\cd)\in L^\i_{\cF_T}(0,T)$, BSVIE \rf{bsvie-no-y} admits a unique adapted solution $(Y(\cd),Z(\cd,\cd))\in L^\i_\dbF(0,T)\times\cl{\rm BMO}(\D[0,T])$.
\end{theorem}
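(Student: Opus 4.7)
The plan is to follow the strategy suggested by the example, exploiting the fact that under \ref{A1} the generator is independent of $y$: for each fixed $t\in[0,T)$ I will solve the one-parameter family of quadratic BSDEs
\bel{plan-bsde}
\eta(t,s)=\psi(t)+\int_s^T g(t,r,\z(t,r))dr-\int_s^T\z(t,r)dW(r),\qq s\in[t,T],
\ee
and then set $Y(t)\deq\eta(t,t)$ and $Z(t,s)\deq\z(t,s)$ on $\D[0,T]$. Since $\psi(t)\in L^\i_{\cF_T}(\Om)$ and $g(t,\cd,\cd)$ satisfies the hypotheses of \ref{A0} uniformly in $t$ (with $\b=0$), \autoref{lemma-briand-hu} guarantees, for each $t$ separately, a unique adapted solution $(\eta(t,\cd),\z(t,\cd))\in L^\i_\dbF(\Om;C[t,T])\times\cl{\rm BMO}(t,T)$.

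The next task is to obtain uniform (in $t$) a priori bounds. The estimate \rf{lemma-briand-hu-main*} of \autoref{lemma-briand-hu} yields
$$
e^{\g|\eta(t,s)|}\les\dbE_s\[e^{\g\|\psi\|_\i+\g\int_0^T|h(r)|dr}\]\les e^{\g\|\psi\|_\i+\g\|h\|_{L^1}},
$$
so $\|\eta(t,\cd)\|_\i$ is bounded by a constant $C_0$ independent of $t$. Applying It\^o's formula to $e^{2\g\eta(t,\cd)}$ (the standard exponential-transform trick for quadratic BSDEs) then produces a uniform bound
$$
\sup_{t\in[0,T]}\,\esssup_{\t\in\sT[t,T]}\Big\|\dbE_\t\[\int_\t^T|\z(t,s)|^2ds\]\Big\|_\i\les C_1,
$$
which is exactly what is needed so that the resulting field lies in $\cl{\rm BMO}(\D[0,T])$.

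The step I expect to be the main obstacle is the joint measurability of $(t,s,\om)\mapsto\z(t,s,\om)$ and $t\mapsto\eta(t,t,\om)$, since \rf{plan-bsde} is only solved pointwise in $t$. I will handle this by approximation: first assume $\psi$ is piecewise constant in $t$ on a partition $0=t_0<\cds<t_N=T$, so \rf{plan-bsde} reduces to finitely many QBSDEs and joint measurability is automatic; the uniform $L^\i$ and $\cl{\rm BMO}$ bounds above are partition-free. For general bounded $\psi$, I approximate $\psi$ by $\psi_n$ piecewise constant in $t$ with $\|\psi_n\|_\i\les\|\psi\|_\i$ and $\psi_n(t)\to\psi(t)$ in $L^2_{\cF_T}$ for a.e.\ $t$. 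Standard stability estimates for quadratic BSDEs with BMO control (via Girsanov reduction of the Lipschitz term $L(1+|z_1|+|z_2|)|z_1-z_2|$ using \autoref{lemma-Girsanov} and \autoref{lemma-BMO}) give convergence of $(\eta_n(t,\cd),\z_n(t,\cd))$ to a limit $(\eta(t,\cd),\z(t,\cd))$ in $L^\i_\dbF(\Om;C[t,T])\times\cl{\rm BMO}(t,T)$ uniformly in $t$. The limit inherits joint measurability and the uniform bounds, yielding $(Y,Z)\in L^\i_\dbF(0,T)\times\cl{\rm BMO}(\D[0,T])$. Evaluating \rf{plan-bsde} at $s=t$ shows $(Y,Z)$ solves \rf{bsvie-no-y}.

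For uniqueness, let $(Y_i,Z_i)$, $i=1,2$, be two adapted solutions in the stated space. For a.e.\ $t\in[0,T]$, define
$$
\eta_i(t,s)\deq\psi(t)+\int_s^T g(t,r,Z_i(t,r))dr-\int_s^T Z_i(t,r)dW(r),\qq s\in[t,T],
$$
so that $\eta_i(t,T)=\psi(t)$, $\eta_i(t,t)=Y_i(t)$, and $(\eta_i(t,\cd),Z_i(t,\cd))$ is the adapted solution of the QBSDE with terminal $\psi(t)$ and generator $g(t,\cd,\cd)$. Since $Z_i(t,\cd)\in\cl{\rm BMO}(t,T)$, the uniqueness clause in \autoref{lemma-briand-hu} forces $\eta_1(t,\cd)=\eta_2(t,\cd)$ and $Z_1(t,\cd)=Z_2(t,\cd)$ on $[t,T]$, hence $Y_1(t)=Y_2(t)$ for a.e.\ $t$, completing the proof.
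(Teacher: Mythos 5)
Your proposal is correct, and for existence it is the paper's own argument: solve the $t$-parameterized family of quadratic BSDEs \rf{eta-zeta-no-y} via \autoref{lemma-briand-hu} and set $Y(t)=\eta(t,t)$, $Z(t,s)=\z(t,s)$. Your uniform $L^\i$ and BMO estimates are the same computations the paper performs later (Steps 1--2 of \autoref{le-l-d-Gamma-e-b}), and your attention to the joint measurability of $(t,s,\om)\mapsto\z(t,s,\om)$ addresses a point the paper passes over in silence --- though note that freezing $\psi$ on a partition does not by itself reduce \rf{plan-bsde} to finitely many BSDEs, since $g$ still depends on $t$; you would need to freeze $g$ in $t$ as well, or argue measurable dependence on the parameter directly. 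Where you genuinely depart from the paper is uniqueness: the paper obtains it as a corollary of the comparison theorem \autoref{thm-comparison-no-y}, whose proof linearizes $g(t,s,Z_1(t,s))-g(t,s,Z_2(t,s))=[Z_1(t,s)-Z_2(t,s)]\th(t,s)$ and applies Girsanov to the difference of the two BSVIE solutions for each fixed $t$, whereas you reconstruct from any adapted solution $(Y_i,Z_i)$ the field $\eta_i(t,s)=Y_i(t)-\int_t^s g(t,r,Z_i(t,r))dr+\int_t^s Z_i(t,r)dW(r)$ and invoke the uniqueness clause of \autoref{lemma-briand-hu}. This is sound and arguably more economical, but since that lemma asserts uniqueness only within $L^\i_\dbF(\Om;C[t,T])\times\cl{\rm BMO}(t,T)$, you should record that $\eta_i(t,\cd)$ is continuous (clear from the forward representation just written) and bounded, which follows from $\eta_i(t,s)=\dbE_s\big[\psi(t)+\int_s^T g(t,r,Z_i(t,r))dr\big]$ together with $|g(t,r,z)|\les h(r)+{\g\over2}|z|^2$ and $Z_i(t,\cd)\in\cl{\rm BMO}(t,T)$. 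The trade-off is that the paper's route delivers the comparison theorem (needed again in \autoref{Comparison-thm} and \autoref{application}) with uniqueness as a byproduct, while yours delivers uniqueness alone, more directly.
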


\begin{proof}
We first show the existence of the adapted solution to BSVIE \rf{bsvie-no-y}.
Consider the following BSDEs parameterized by $t\in[0,T]$:
\bel{eta-zeta-no-y}
\eta(t,s)=\psi(t)+\int_s^T g(t,r,\z(t,r))dr-\int_s^T\z(t,r)dW(r),\q s\in[t,T].
\ee
For almost all $t\in[0,T]$, by \autoref{lemma-briand-hu}, under {\rm\ref{A1}},
BSDE \rf{eta-zeta-no-y} admits a unique adapted solution $(\eta(t,\cd),\z(t,\cd))\in L_\dbF^\i(\Om;C[t,T])\times\cl{\rm BMO}(t,T)$.
Let
$$Y(t)=\eta(t,t),\q Z(t,s)=\z(t,s),\q (t,s)\in\D[0,T],$$
then $(Y(\cd),Z(\cd,\cd))\in L^\i_\dbF(0,T)\times\cl{\rm BMO}(\D[0,T])$ and
$$
Y(t)=\psi(t)+\int_t^Tg(t,s,Z(t,s))ds-\int_t^T Z(t,s)dW(s),\q t\in[0,T],
$$
which implies that $(Y(\cd),Z(\cd,\cd))$ is an adapted solution for BSVIE \rf{bsvie-no-y}.

\ms

The uniqueness is followed from the next theorem.
\end{proof}

Consider the following BSVIEs: For $i=1,2$,
\bel{bsvie-no-y-com}
Y_i(t)=\psi_i(t)+\int_t^T g_i(t,s,Z_i(t,s))ds-\int_t^T Z_i(t,s)dW(s),\q t\in[0,T].
\ee
We have the following comparison theorem.

\begin{theorem}\label{thm-comparison-no-y} \sl
Let $g_1(\cd)$ and $g_2(\cd)$ satisfy {\rm\ref{A1}}, $\psi_1(\cd),\psi_2(\cd)\in L^\i_{\cF_T}(0,T)$. Let $(Y_i(\cd),Z_i(\cd,\cd))\in  L^\i_\dbF(0,T)\times\cl{\rm BMO}(\D[0,T])$ be the adapted solution of corresponding BSVIE \rf{bsvie-no-y-com}. Suppose
\bel{psi-g-no-y-com}
\psi_1(t)\les\psi_2(t),\q g_1(t,s,z)\les g_2(t,s,z),\q \as,~\ae~(t,s,z)\in\D[0,T]\times\dbR,
\ee
then we have
\bel{com-no-y}
Y_1(t)\les Y_2(t),\q\as,~\ae~t\in[0,T].
\ee
In particular, if $g_1(\cd)=g_2(\cd)$ and $\psi_1(\cd)=\psi_2(\cd)$, the comparison implies the uniqueness of adapted solution to BSVIEs \rf{bsvie-no-y}.
\end{theorem}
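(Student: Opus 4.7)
The plan is to reduce the BSVIE comparison to a pointwise-in-$t$ comparison result for quadratic BSDEs, and then to handle the latter by the classical linearization-plus-Girsanov argument made possible by \autoref{lemma-Girsanov}.

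\textbf{Step 1 (reduction to QBSDEs).} Fix $t\in[0,T]$ outside a Lebesgue-null set on which the BSVIE identity \rf{bsvie-no-y-com} holds for both $i=1,2$. Define
$$\eta_i(t,s)\deq Y_i(t)+\int_t^s g_i(t,r,Z_i(t,r))dr-\int_t^s Z_i(t,r)dW(r),\q s\in[t,T].$$
Rearranging \rf{bsvie-no-y-com} shows that $(\eta_i(t,\cd),Z_i(t,\cd))$ is an adapted solution on $[t,T]$ of the quadratic BSDE with terminal value $\psi_i(t)$ and generator $g_i(t,\cd,\cd)$, with $\eta_i(t,t)=Y_i(t)$. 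Since $Z_i(t,\cd)\in\cl{\rm BMO}(t,T)$ by hypothesis, \autoref{lemma-briand-hu} identifies this as \emph{the} unique such solution in $L^\i_\dbF(\Om;C[t,T])\times\cl{\rm BMO}(t,T)$. It therefore suffices to show $\eta_1(t,t)\les\eta_2(t,t)$ for a.e.\ $t$.

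\textbf{Step 2 (linearization).} Write $\D\eta\deq\eta_2-\eta_1$, $\D Z(s)\deq Z_2(t,s)-Z_1(t,s)$, and $\D\psi\deq\psi_2(t)-\psi_1(t)\ges 0$. Decompose
$$g_2(t,s,Z_2(t,s))-g_1(t,s,Z_1(t,s))=\a(s)\D Z(s)+\b(s),$$
where $\b(s)\deq g_2(t,s,Z_1(t,s))-g_1(t,s,Z_1(t,s))\ges 0$ by \rf{psi-g-no-y-com}, and
$$\a(s)\deq\frac{g_2(t,s,Z_2(t,s))-g_2(t,s,Z_1(t,s))}{\D Z(s)}\,\mathbf{1}_{\{\D Z(s)\ne 0\}}$$
is $\dbF$-progressively measurable. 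The local Lipschitz estimate in \ref{A1} yields $|\a(s)|\les L(1+|Z_1(t,s)|+|Z_2(t,s)|)$, so since $Z_i(t,\cd)\in\cl{\rm BMO}(t,T)$, the process $N(s)\deq\int_t^s\a(r)dW(r)$ is a BMO martingale whose norm is controlled by $L$, $T$, and $\|Z_i(t,\cd)\|_{\cl{\rm BMO}(t,T)}$.

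\textbf{Step 3 (Girsanov and conclusion).} By \autoref{lemma-Girsanov}, $\cl W(s)\deq W(s)-\int_t^s\a(r)dr$ is a standard Brownian motion on $[t,T]$ under $\cl\dbP$ defined by $d\cl\dbP=\cE\{\a\}_T d\dbP$. Subtracting the two BSDEs from Step 1 and inserting the linearization produces
$$\D\eta(t,s)=\D\psi+\int_s^T\b(r)dr-\int_s^T\D Z(r)d\cl W(r),\q s\in[t,T].$$
By \autoref{lemma-BMO}, $\D Z$ remains BMO under $\cl\dbP$, hence the stochastic integral on the right is a true $\cl\dbP$-martingale. Taking $\cl\dbP$-conditional expectation $\cl\dbE_s[\cd]$ gives $\D\eta(t,s)=\cl\dbE_s\big[\D\psi+\int_s^T\b(r)dr\big]\ges 0$, and setting $s=t$ yields $Y_2(t)\ges Y_1(t)$, proving \rf{com-no-y}; the uniqueness corollary follows by applying the inequality in both directions. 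The main obstacle is the BMO control of $\a$: its linear growth in $|Z_1|+|Z_2|$ forces the BMO norm of $N$ to depend on $\|Z_i(t,\cd)\|^2_{\cl{\rm BMO}(t,T)}$, and the essential-sup-in-$t$ uniformity built into the definition of $\cl{\rm BMO}(\D[0,T])$ is precisely what makes the change of measure admissible for a.e.\ $t$, after which the $y$-free structure of the generator renders the final sign argument immediate.
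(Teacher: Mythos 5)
Your proof is correct and follows essentially the same route as the paper's: linearize the generator in $z$, use the BMO bound on the resulting slope to change measure via Girsanov (Lemma \ref{lemma-Girsanov}), and take conditional expectation under the new measure so that the sign of the surviving terms, guaranteed by \rf{psi-g-no-y-com}, yields the comparison (the paper linearizes $g_1$ and keeps $g_1(t,s,Z_2(t,s))-g_2(t,s,Z_2(t,s))\les 0$ as the remainder, while you linearize $g_2$ and keep $g_2(t,s,Z_1(t,s))-g_1(t,s,Z_1(t,s))\ges 0$; this is an immaterial symmetry). The only slip is the sign in your Step 1 definition: since $d\eta_i(t,s)=-g_i\,ds+Z_i\,dW(s)$, the forward representation should be $\eta_i(t,s)=Y_i(t)-\int_t^s g_i(t,r,Z_i(t,r))dr+\int_t^s Z_i(t,r)dW(r)$ (with your signs one gets $\eta_i(t,T)=2Y_i(t)-\psi_i(t)\neq\psi_i(t)$), but the backward form you actually manipulate in Steps 2--3 is the correct one, so nothing downstream is affected.
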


\begin{proof}
We note that
\begin{align}
\nn Y_1(t)-Y_2(t)&=\psi_1(t)-\psi_2(t)+\int_t^T\left[g_1(t,s,Z_1(t,s))-g_2(t,s,Z_2(t,s))\right]ds\\
\label{com-no-y-y1-y2}        &{\hp =}\qq-\int_{t}^{T}\left[Z_1(t,s)-Z_2(t,s)\right]dW(s).
\end{align}
Define the process $\th(\cd,\cd)$ such that
\begin{align}
&\th(t,s)=0,\q(t,s)\in\D^*[0,T];\\
&|\th(t,s)|\les C(1+|Z_1(t,s)|+|Z_2(t,s)|),\q (t,s)\in\D[0,T];\\
\label{com-no-y-beta}&g_1(t,s,Z_1(t,s))-g_1(t,s,Z_2(t,s))
=\big[Z_1(t,s)-Z_2(t,s)\big]\th(t,s),\q(t,s)\in\D[0,T].
\end{align}
Hereafter, $C>0$ stands for a generic constant which could be different from line to line. Then, for almost all $t\in[0,T]$, $W(t;\cd)$ defined by
\bel{W(t,s)}
W(t;s)\deq W(s)-\int_0^s\th(t,r)dr,\q s\in[0,T]
\ee
is a Brownian motion on $[0,T]$ under the equivalent probability measure $\cl{\dbP}_t$ defined by
$$d\cl{\dbP}_t\deq\cE\{\th(t,\cd)\}_{\1n_T}d\dbP. $$
The corresponding expectation is denoted by $\dbE^{\bar\dbP_t}$. Thus, by \rf{com-no-y-y1-y2} and \rf{W(t,s)}, we have
\begin{align*}
Y_1(t)-Y_2(t)&=\psi_1(t)-\psi_2(t)+\int_t^T\left[g_1(t,s,Z_2(t,s))-g_2(t,s,Z_2(t,s))\right]ds\\
             &\hp{=\ }-\int_t^T\left[Z_1(t,s)-Z_2(t,s)\right]dW(t;s).
\end{align*}
Taking the conditional expectation with respect to $\cl{\dbP}_t$ on the both sides of the above equation and then by \rf{psi-g-no-y-com}, we have
\begin{align*}
Y_1(t)-Y_2(t)&=\dbE^{\bar\dbP_t}_t\[\psi_1(t)-\psi_2(t)+\int_t^T\left[g_1(t,s,Z_2(t,s))
-g_2(t,s,Z_2(t,s))\right]ds\]\les 0,\q\as
\end{align*}
Hence, \rf{com-no-y} follows.
\end{proof}

\begin{remark}
\rm \autoref{thm-exist-unique-no-y} and \autoref{thm-comparison-no-y} are both concerned with the BSVIE \rf{bsvie-no-y},
a very special case of Type-\uppercase\expandafter{\romannumeral1} BSVIE \rf{bsvie-I}, in which, the generator $g(\cd)$ is independent of the variable $y$. This makes the BSVIE \rf{bsvie-no-y} much easier to handle. Even though, \autoref{thm-exist-unique-no-y} and \autoref{thm-comparison-no-y} serve as a crucial bridge to the proof of the results for general Type-I BSVIEs.
\end{remark}
\subsection{The general case}
In this subsection, we will consider the following Type-\uppercase\expandafter{\romannumeral1} BSVIE:
\bel{bsvie-1-d}
Y(t)=\psi(t)+\int_t^Tg(t,s,Y(s),Z(t,s))ds-\int_t^TZ(t,s)dW(s),\qq t\in[0,T].
\ee
We first introduce the following assumption, which is comparable to \ref{A0}.
\begin{taggedassumption}{(A2)}\label{A2}\rm
Let the generator $g:\D[0,T]\times\dbR\times\dbR\times\Om\to\dbR$ be $\cB(\D[0,T]\times\dbR\times\dbR)\otimes\cF_T$-measurable such that $s\mapsto g(t,s,y,z)$ is $\dbF$-progressively measurable on $[t,T]$ for all $(t,y,z)\in [0,T]\times\dbR\times\dbR$. There exist two constants $L$ and $\g$ such that:
\begin{align*}
&
|g(t,s,y,z)|\les L(1+|y|)+{\g\over 2}|z|^2,\q\forall (t,s,y,z)\in\D[0,T]\times\dbR\times\dbR;\\
&|g(t,s,y_1,z_1)-g(t,s,y_2,z_2)|\les L\big\{|y_1-y_2|+(1+|z_1|+|z_2|)|z_1-z_2|\big\},\\
&\qq\qq\qq\qq\qq\qq\q\,~\forall (t,s,y_i,z_i)\in\D[0,T]\times\dbR\times\dbR,~i=1,2.
\end{align*}
\end{taggedassumption}

At the same time, we introduce the following additional assumption which will be used to establish a better regularity for the adapted solutions.

\begin{taggedassumption}{(A3)}\label{A3}\rm
Let $g:[0,T]^2\times\dbR\times\dbR\times\Om\to\dbR$ be measurable such that for every $(t,y,z)\in[0,T]\times\dbR\times\dbR$, $s\mapsto g(t,s,y,z)$ is $\dbF$-progressively measurable. There exists a modulus of continuity $\rho:[0,\i)\to[0,\i)$ (a continuous and monotone increasing function with $\rho(0)=0$) such that
\begin{align*}
|g(t,s,y,z)-g(t',s,y,z)|\les\rho(|t-t'|)(1+|y|+|z|^2),\q \forall~t,t',s\in[0,T],~(y,z)\in\dbR\times\dbR.
\end{align*}
\end{taggedassumption}

Note that in \ref{A3}, the generator $g(t,s,y,z)$ is defined for $(t,s)$ in the square domain $[0,T]^2$ instead of the triangle domain $\D[0,T]$, and the uniform continuity of the map $t\mapsto g(t,s,y,z)$ (uniform for $(s,y,z)$ in any bounded set) is assumed. Now, we state the main result of this subsection.

\begin{theorem}\label{thm-bsvie-1-d-exist-unique} \sl
Let {\rm\ref{A2}} hold. Then for any $\psi(\cd)\in L^\infty_{\cF_T}(0,T)$, BSVIE \rf{bsvie-1-d} admits a unique adapted solution $(Y(\cd),Z(\cd,\cd))\in L^\i_{\dbF}(0,T)\times \cl{\rm BMO}(\D[0,T])$.
\end{theorem}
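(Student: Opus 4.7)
The plan is to reduce \rf{bsvie-1-d} to the $y$-free case already handled in \autoref{thm-exist-unique-no-y} via a Picard iteration on the first component $Y(\cd)$ alone, with the contraction supplied by a BMO--Girsanov linearisation as in \autoref{lemma-Girsanov}.

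\textbf{Step 1: The Picard map and its uniform bound.} Given any $Y\in L^\i_\dbF(0,T)$, freeze $Y$ and form the generator $g^Y(t,s,z):=g(t,s,Y(s),z)$. Under \ref{A2}, $g^Y$ verifies \ref{A1} with the integrable $h(s)\equiv L(1+\|Y\|_\i)$, so \autoref{thm-exist-unique-no-y} produces a unique adapted solution $(\widetilde Y,\widetilde Z)\in L^\i_\dbF(0,T)\times\cl{\rm BMO}(\D[0,T])$ of
$$\widetilde Y(t)=\psi(t)+\int_t^T g^Y(t,s,\widetilde Z(t,s))ds-\int_t^T\widetilde Z(t,s)dW(s),\q t\in[0,T].$$
Define $\Gamma(Y):=\widetilde Y$. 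The BSDE representation $\widetilde Y(t)=\eta(t,t)$ from \rf{eta-zeta-no-y}, coupled with \autoref{lemma-briand-hu} applied with $\b=0$, yields the deterministic bound
$$|\widetilde Y(t)|\les\|\psi\|_\i+L(1+\|Y\|_\i)(T-t),\qq\ae~t\in[0,T],~\as,$$
so $\Gamma$ sends $L^\i_\dbF(0,T)$ into itself.

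\textbf{Step 2: Contraction via BMO change of measure.} For $Y_1,Y_2\in L^\i_\dbF(0,T)$, set $(\widetilde Y_i,\widetilde Z_i):=\Gamma(Y_i)$. Imitating the linearisation in the proof of \autoref{thm-comparison-no-y}, one produces bounded random fields $\a,\th$ with $|\a(t,s)|\les L$ and $|\th(t,s)|\les L(1+|\widetilde Z_1(t,s)|+|\widetilde Z_2(t,s)|)$ such that
$$\widetilde Y_1(t)-\widetilde Y_2(t)=\int_t^T\a(t,s)[Y_1(s)-Y_2(s)]ds-\int_t^T[\widetilde Z_1(t,s)-\widetilde Z_2(t,s)]d\bar W(t;s),$$
with $\bar W(t;s):=W(s)-\int_0^s\th(t,r)dr$. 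Because $\widetilde Z_i(t,\cd)\in\cl{\rm BMO}(t,T)$, the local martingale $\int_0^{\cd}\th(t,r)dW(r)$ is BMO, and \autoref{lemma-Girsanov} supplies an equivalent probability $\bar\dbP_t$ under which $\bar W(t;\cd)$ is a standard Brownian motion. Taking conditional expectation under $\bar\dbP_t$ absorbs the stochastic integral and gives
$$|\widetilde Y_1(t)-\widetilde Y_2(t)|\les L(T-t)\|Y_1-Y_2\|_{L^\i_\dbF(0,T)},\qq\ae~t\in[0,T],~\as,$$
so $\Gamma$ is a strict contraction on $L^\i_\dbF(0,T)$ whenever $LT<1$.

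\textbf{Step 3: Patching and the main obstacle.} When $LT<1$, Banach's fixed-point theorem applied in the complete space $L^\i_\dbF(0,T)$ delivers a unique fixed point, whose associated $Z$ furnished by Step~1 makes $(Y,Z)\in L^\i_\dbF(0,T)\times\cl{\rm BMO}(\D[0,T])$ the required adapted solution. For arbitrary $T$, pick a partition $0=T_0<T_1<\cds<T_n=T$ with $L(T_{k+1}-T_k)<1$ and solve backwards: having constructed $(Y,Z)$ on $[T_{k+1},T]$, iterate the Picard map on $L^\i_\dbF(T_k,T_{k+1})$ by solving, for every $t\in[T_k,T_{k+1}]$, the BSDE \rf{eta-zeta-no-y} on $[t,T]$ whose generator couples the candidate values of $Y$ on $[T_k,T_{k+1}]$ with the already-determined values on $[T_{k+1},T]$; differences are supported only on $[T_k,T_{k+1}]$, so the Step~2 estimate yields the same contraction factor $L(T_{k+1}-T_k)<1$, and the pieces assemble into a unique global solution. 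The main technical obstacle is Step~2: the linearisation coefficient $\th$ has only linear growth in $|\widetilde Z_i|$, so no direct $L^2$ energy estimate controls the martingale term. It is precisely \autoref{thm-exist-unique-no-y}'s delivery of BMO---rather than merely $L^2$---regularity of the $Z$-components that enables \autoref{lemma-Girsanov} to absorb this quadratic-growth term by an equivalent change of measure; this is why the $y$-independent case had to be established in $\cl{\rm BMO}(\D[0,T])$ before the general case could be approached.
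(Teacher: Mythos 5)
Your proof is correct, and it follows the paper's overall architecture --- freeze $Y(\cd)$ in the generator, invoke the $y$-independent result (\autoref{thm-exist-unique-no-y}) to define a Picard map, linearise the $z$-increment into a BMO--Girsanov change of measure, and patch backwards in time --- but your execution of the contraction and of the patching is genuinely leaner than the paper's, in two respects. First, the paper runs the fixed point on pairs $(U(\cd),V(\cd,\cd))$ in an invariant ball $\cB_\e$ and must also contract the $Z$-component in $\cl{\rm BMO}(\D[T-\e,T])$; transferring that estimate back from $\bar\dbP_t$ to $\dbP$ via \autoref{lemma-BMO} introduces a constant depending on $\|\psi(\cd)\|_\i$, so the admissible step length $\e$ depends on the size of the free term, and the paper must then prove, via the ODE comparison function $\a(\cd)$ in Step 1 of its proof, that the successive free terms $\psi^{T-\e},\psi^{T-2\e},\dots$ stay uniformly bounded so that a single $\e$ works throughout the induction. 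Because you measure the contraction in the $Y$-component alone, the conditional expectation under $\bar\dbP_t$ annihilates the martingale term outright and your contraction constant $L(T_{k+1}-T_k)$ depends only on $L$; the entire $\a(\cd)$ apparatus becomes unnecessary and the partition can be fixed in advance. (Uniqueness still follows, since any adapted solution in $L^\i_\dbF(0,T)\times\cl{\rm BMO}(\D[0,T])$ has its first component a fixed point of $\G$, by the uniqueness assertion of \autoref{thm-exist-unique-no-y} applied with $Y$ frozen.) Second, by solving the parameterized BSDE on the full interval $[t,T]$ for each $t\in[T_k,T_{k+1}]$, you absorb what the paper isolates as a separate stochastic Fredholm integral equation \rf{psi(T-e)} --- the determination of $Z(t,s)$ on $[0,T-\e]\times[T-\e,T]$ and of the $\cF_{T-\e}$-measurable free term $\psi^{T-\e}$ --- into the BSDE solve itself; the computations are the same, but your packaging avoids stating and proving well-posedness of the SFIE separately. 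The one point worth making explicit in your Step 2 is that killing the stochastic integral $\int_t^T[\wt Z_1(t,s)-\wt Z_2(t,s)]\,d\bar W(t;s)$ under $\dbE^{\bar\dbP_t}_t$ requires it to be a true $\bar\dbP_t$-martingale, which is exactly where \autoref{lemma-BMO} is used qualitatively (BMO under $\dbP$ implies BMO under $\bar\dbP_t$) even though its constants never enter your final estimate.
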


We will prove \autoref{thm-bsvie-1-d-exist-unique} by means of contraction mapping theorem. For any $(U(\cd),V(\cd,\cd))\in L^\i_{\dbF}(0,T)\times\cl{\rm BMO}(\D[0,T])$, consider the following BSVIE:
\bel{bsvie-1-d-uv}
Y(t)=\psi(t)+\int_t^Tg(t,s,U(s),Z(t,s))ds-\int_t^TZ(t,s)dW(s).
\ee
By \autoref{thm-exist-unique-no-y}, BSVIE \rf{bsvie-1-d-uv} admits a unique adapted solution $(Y(\cd),Z(\cd,\cd))\in L^\i_\dbF(0,T)\times\cl{\rm BMO}$ $(\D[0,T])$. Thus, the map
\bel{1-d-gamma}
\G(U(\cd),V(\cd,\cd))\deq(Y(\cd),Z(\cd,\cd)),\q(U(\cd),V(\cd,\cd))\in L^\i_{\dbF}(0,T)\times\cl{\rm BMO}(\D[0,T])
\ee
is well-defined. In order to prove \autoref{thm-bsvie-1-d-exist-unique}, we present the following lemma.

\begin{lemma}\label{le-l-d-Gamma-e-b} \sl
Let {\rm\ref{A2}} hold and $\e\in(0,{1\over2L}]$. Then for any $\psi(\cd)\in L^\i_{\cF_T}(0,T)$, the map $\G(\cd,\cd)$ defined by \rf{1-d-gamma} satisfies the following:
\bel{le-1-d-Gamma-e-b-main}
\G(\cB_\e)\subseteq\cB_\e,
\ee
where $\cB_\e$ is defined by the following:
\bel{1-d-b-e}\ba{ll}
\ns\ds\cB_\e\deq\Big\{(U(\cd),V(\cd,\cd))\in L^\i_{\dbF}(T-\e,T)\times
\cl{\rm BMO}(\D[T-\e,T])\bigm|\\
\ns\ds\qq\qq\qq\qq\|U(\cd)\|_{L^\i_\dbF(T-\e,T)}\les2\|\psi(\cd)\|_\i+1,
\q\|V(\cd\,,\cd)\|^2_{\cl{\rm BMO}(\D[T-\e,T])}\les A\Big\},
\ea\ee
with
$$A={2\over\g^2}e^{\g\|\psi(\cd)\|_\i}+{1\over\g}e^{2(\g+1)
\|\psi(\cd)\|_\i+\g+2}.$$

\end{lemma}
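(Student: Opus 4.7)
The plan is to reduce the BSVIE \rf{bsvie-1-d-uv} to a family of classical BSDEs parameterized by $t$, on each of which \autoref{lemma-briand-hu} applies, and then to post-process the resulting estimates into the uniform $L^\infty$ and $\cl{\rm BMO}$ bounds defining $\cB_\e$. For each fixed $t\in[T-\e,T]$, the construction behind \autoref{thm-exist-unique-no-y} identifies $Y(t)=\eta(t,t)$ and $Z(t,s)=\zeta(t,s)$, where $(\eta(t,\cd),\zeta(t,\cd))$ is the unique adapted solution of
\begin{align*}
\eta(t,s)=\psi(t)+\int_s^T g(t,r,U(r),\zeta(t,r))dr-\int_s^T\zeta(t,r)dW(r),\q s\in[t,T].
\end{align*}
The induced BSDE generator $f(s,y,z):=g(t,s,U(s),z)$ does not depend on $y$, so under \ref{A2} and the standing bound $\|U\|_\infty\les 2\|\psi\|_\infty+1$, it satisfies \ref{A0} with $\b=0$ and $h(s)\equiv L(2\|\psi\|_\infty+2)$.

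For the $L^\infty$ part, applying \rf{lemma-briand-hu-main*} with $\b=0$ gives $|\eta(t,s)|\les\|\psi\|_\infty+L(2\|\psi\|_\infty+2)\e$ for every $s\in[t,T]$; the constraint $\e\les 1/(2L)$ absorbs the second term into $\|\psi\|_\infty+1$, so $|\eta(t,s)|\les M:=2\|\psi\|_\infty+1$ uniformly in $(t,s)$, which in particular yields $\|Y\|_{L^\infty_\dbF(T-\e,T)}\les 2\|\psi\|_\infty+1$ and a uniform pointwise bound on $\eta(t,\cd)$ that is reused below.

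For the $\cl{\rm BMO}$ estimate on $Z$, I would apply It\^o's formula to $\phi(|\eta(t,\cd)|)$ with the convex weight
\begin{align*}
\phi(y)=\frac{e^{\g y}-\g y-1}{\g^2},\qq y\ges 0.
\end{align*}
This choice is dictated by the identity $\tfrac12\phi''(y)-\tfrac{\g}{2}\phi'(y)=\tfrac12$, which causes the quadratic-in-$\zeta$ contribution coming from the bound $|f|\les L(1+|U|)+\tfrac{\g}{2}|\zeta|^2$ to be absorbed exactly by the It\^o second-order term, leaving a positive residue $\tfrac12|\zeta|^2$. Taking conditional expectation at any $\t\in\sT[t,T]$ and using $\phi\ges 0$, $\eta(t,T)=\psi(t)$, together with the monotonicity of $\phi'$, $|\eta|\les M$ and $1+|U|\les 1+M$, I obtain
\begin{align*}
\dbE_\t\Big[\int_\t^T|\zeta(t,s)|^2ds\Big]\les 2\phi(\|\psi\|_\infty)+2\phi'(M)L(1+M)\e.
\end{align*}
Bounding $\phi(\|\psi\|_\infty)\les e^{\g\|\psi\|_\infty}/\g^2$, $\phi'(M)\les e^{\g M}/\g$, the polynomial factor $1+M\les e^{1+M}=e^{2\|\psi\|_\infty+2}$, and finally $\e\les 1/(2L)$ produces exactly $A=\tfrac{2}{\g^2}e^{\g\|\psi\|_\infty}+\tfrac{1}{\g}e^{2(\g+1)\|\psi\|_\infty+\g+2}$. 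Since all estimates are uniform in $t$, passing to the essential supremum over $t\in[T-\e,T]$ gives $\|Z\|^2_{\cl{\rm BMO}(\D[T-\e,T])}\les A$.

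The main obstacle is the choice of the weight $\phi$: it must simultaneously produce the algebraic balance above (so that a strictly positive coefficient on $|\zeta|^2$ survives after the $\g/2$ factor from the growth of $f$) and satisfy $\phi'(0)=0$ so that $\phi(|\cd|)$ is $C^1$ and It\^o's formula can be applied to $|\eta|$ (rigorously, via the standard approximation of $|\cd|$ by smooth convex functions). A secondary, routine point is that the $\cl{\rm BMO}(t,T)$ regularity of $\zeta(t,\cd)$ furnished by \autoref{lemma-briand-hu} guarantees that the stochastic integrals in the It\^o expansion are true martingales, so the conditional-expectation step is legitimate.
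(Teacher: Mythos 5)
Your proposal is correct and follows essentially the same route as the paper's proof: the same reduction to the $t$-parameterized family of BSDEs, the same application of \autoref{lemma-briand-hu} with $\b=0$ and $h(s)=L(1+|U(s)|)$ for the $L^\i$ bound, and the same exponential test function $\p(y)=\g^{-2}\big(e^{\g|y|}-\g|y|-1\big)$ in It\^o's formula (exploiting $\tfrac12\p''-\tfrac\g2|\p'|=\tfrac12$) for the $\cl{\rm BMO}$ bound, arriving at the identical constant $A$. The only (harmless) difference is that you evaluate the conditional expectation at a stopping time $\t\in\sT[t,T]$ rather than at a deterministic time $s$, which is if anything slightly more faithful to the definition of the $\cl{\rm BMO}$ norm.
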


\begin{proof}

For any $(U(\cd),V(\cd,\cd))\in \cB_\e $, consider a family of BSDEs (parameterized by $t\in[0,T]$):
\bel{le-l-d-Gamma-e-b-1}
\eta(t,s)=\psi(t)+\int_s^Tg(t,r,U(r),\z(t,r))dr-\int_s^T\z(t,r)dW(r),\q s\in[t,T].
\ee
Note that $U(\cd)$ is bounded.
For almost all $t\in[T-\e,T]$, by \autoref{lemma-briand-hu}, the above BSDE admits a unique adapted solution $(\eta(t,\cd),\z(t,\cd))\in L^\i_\dbF(\Om;C[t,T])\times\cl{\rm BMO}(t,T)$. Let
\bel{le-1-d-Gamma-e-b-yz}
Y(t)=\eta(t,t),\q Z(t,s)=\z(t,s),\q (t,s)\in\D[T-\e,T].
\ee
Then by \autoref{thm-exist-unique-no-y}, $(Y(\cd),Z(\cd,\cd))\in L^\i_{\dbF}(0,T)\times\cl{\rm BMO}(\D[0,T])$ is the unique adapted solution to BSVIE \rf{bsvie-1-d-uv}. The rest of the proof is divided into two steps.

\ss

{\bf Step 1:} {\it Estimate of $\|Y(\cd)\|_\i$. }

\ms

For BSDE \rf{le-l-d-Gamma-e-b-1}, by \ref{A2}, we have
$$|g(t,r,U(r),\z)|\les L\big(1+|U(r)|\big)+{\g\over2}|\z|^2.$$
Thus, note that $\e\in(0,{1\over 2L}]$, by \autoref{lemma-briand-hu} with $h(s)=L(1+|U(s)|)$, $\g=\g$ and $\b=0$, we have
\bel{le-l-d-Gamma-e-b-9}\ba{ll}
\ns\ds e^{\g|\eta(t,s)|}\les\dbE_s\[e^{\g \big(|\psi(t)|+L\int_s^T(1+|U(r)|)dr\big)}\]\les e^{\g\big[\|\psi(\cd)\|_\i+L\e\big(1+\|U(\cd)\|_{L^\i_\dbF(T-\e,T)}\big)\big]}\\
\ns\ds\qq\q~\les e^{\g(2\|\psi(\cd)\|_\i+1)},\q T-\e\les t\les s\les T,\ea\ee
which is equivalent to
\bel{le-l-d-Gamma-e-b-9.1}|\eta(t,s)|\les2\|\psi(\cd)\|_\i+1,\q T-\e\les t\les s\les T.\ee
Consequently, noting $Y(t)=\eta(t,t)$, one has
$$\|Y(\cd)\|_{L^\i_\dbF(T-\e,T)}\les2\|\psi(\cd)\|_\i+1.$$

{\bf Step 2:} {\it Estimate of $\|Z(\cd\,,\cd)\|^2_{\cl{\rm BMO}(\D[T-\e,T])}$.}

\ms

Define
\bel{le-l-d-Gamma-e-b-2}\p(y)\deq\g^{-2}\big(e^{\g|y|}-\g|y|-1\big);\q y\in\dbR. \ee
%
Then, we have
\bel{le-l-d-Gamma-e-b-3}
\p'(y)
=\g^{-1}[e^{\g|y|}-1]\hb{sgn}(y),\q
\p''(y)
=e^{\g|y|},\ee
which leads to $\p''(y)=\g|\p'(y)|+1$. Applying It\^o's formula to $s\mapsto\p(\eta(t,s))$, we have
\begin{align}
\nn                         &\p(\psi(t))-\p(\eta(t,s))\\
\label{le-l-d-Gamma-e-b-4}  &\q=-\int_s^T \p'(\eta(t,r))g(t,r,U(r),\z(t,r))dr+{1\over 2}\int_s^T \p''(\eta(t,r))|\z(t,r)|^2dr\\
\nn                         &\q\hp{=\ } +\int_s^T\p'(\eta(t,r))\z(t,r)dW(r),\q s\in[t,T].
\end{align}
Taking conditional expectation on the both sides of \rf{le-l-d-Gamma-e-b-4} and by {\rm\ref{A2}}, we have
\begin{align*}
&\p(\eta(t,s))+{1\over 2}\dbE_s\[\int_s^T\p''(\eta(t,r))|\z(t,r)|^2dr\]\\
&~\les\p(\|\psi(\cd)\|_\i)+L\dbE_s\[\int_s^T|\p'(\eta(t,r))|\big(1+|U(r)|\big)dr\]
+{\g\over2}\dbE_s\[\int_s^T|\p'(\eta(t,r))|\,|\z(t,r)|^2dr\].
\end{align*}
Combining this with \rf{le-l-d-Gamma-e-b-3}, one obtains
\bel{le-l-d-Gamma-e-b-5}\p(\eta(t,s))+{1\over 2}\dbE_s\[\int_s^T|\z(t,r)|^2dr\]\les\p(\|\psi(\cd)\|_\i)+L\dbE_s\[
\int_s^T|\p'(\eta(t,r))|(1+|U(r)|)dr\].\ee
Then, noting that $\p(\eta(t,s))\ges0$, we simply drop it to get
\begin{align*}
&\dbE_s\[\int_s^T|Z(t,r)|^2dr\]\les2\p(\|\psi(\cd)\|_\i)+2L\dbE_s\[\int_s^T|\p'(\eta(t,r))|(1+|U(r)|)dr\]\\
&~\les{2\over\g^2}e^{\g\|\psi(\cd)\|_\i}+{2L\over\g}\e e^{\g(2\|\psi(\cd)\|_\i+1)}e^{2(\|\psi(\cd)\|_\i+1)}
 \les{2\over\g^2}e^{\g\|\psi(\cd)\|_\i}+{1\over\g}e^{2(\g+1)\|\psi(\cd)\|_\i+\g+2}.
\end{align*}
Hence,
\bel{Z<A}\|Z(\cd\,,\cd)\|^2_{\cl{\rm BMO}(\D[T-\e,T])}\les{2\over\g^2}e^{\g\|\psi(\cd)\|_\i}+{1\over\g}e^{2(\g+1)
\|\psi(\cd)\|_\i+\g+2}=A.\ee
This proves our claim. \end{proof}

The next result is concerned with the local solution of BSVIE \rf{bsvie-1-d}.

\begin{proposition}\label{pro-l-d-Gamma-e-c} \sl
Let {\rm\ref{A2}} hold and the map $\G(\cd\,,\cd)$ be defined by \rf{1-d-gamma}.
Then there is $\e>0$  such that $\Gamma(\cd\,,\cd)$ is a contraction on $\mathcal{B}_\e$, where $\mathcal{B}_\e$ is defined by \rf{1-d-b-e}.
This implies that BSVIE \rf{bsvie-1-d} admits a unique adapted solution on $[T-\e,T]$.
\end{proposition}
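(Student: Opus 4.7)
The plan is to show that for sufficiently small $\e>0$, the map $\G$ is a strict contraction on $\cB_\e$ endowed with the product norm of $L^\i_\dbF(T-\e,T)$ and $\cl{\rm BMO}(\D[T-\e,T])$, so that Banach's fixed-point theorem supplies a unique fixed point which is by construction an adapted solution of \rf{bsvie-1-d} on $[T-\e,T]$. A useful simplification is that, from the form of BSVIE \rf{bsvie-1-d-uv}, the output $(Y(\cd),Z(\cd\,,\cd))=\G(U(\cd),V(\cd\,,\cd))$ depends only on $U(\cd)$ and not on $V(\cd\,,\cd)$; hence it suffices to dominate the output norm by $\|U_1-U_2\|_{L^\i_\dbF(T-\e,T)}$ alone, where $(Y_i,Z_i)=\G(U_i,V_i)$ for $i=1,2$. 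Since each $(Y_i(\cd),Z_i(\cd\,,\cd))$ is built from the family of BSDEs \rf{le-l-d-Gamma-e-b-1} for $(\eta_i(t,\cd),\z_i(t,\cd))$, all estimates can be carried out at the BSDE level and then combined in $t$.

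Fix $(U_i,V_i)\in\cB_\e$ for $i=1,2$. By \ref{A2}, decompose
$$g(t,r,U_1,\z_1)-g(t,r,U_1,\z_2)=\th(t,r)(\z_1-\z_2),\q|\th(t,r)|\les L(1+|\z_1(t,r)|+|\z_2(t,r)|).$$
Because $\G(\cB_\e)\subseteq\cB_\e$ by \autoref{le-l-d-Gamma-e-b}, $\|\z_i(t,\cd)\|^2_{\cl{\rm BMO}(t,T)}\les A$ uniformly in $t\in[T-\e,T]$, so $\|\th(t,\cd)\|_{\cl{\rm BMO}(t,T)}$ is bounded uniformly in $t$ by some constant $K=K(L,A,T)$. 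By \autoref{lemma-Girsanov}, the equivalent measure $d\cl\dbP_t=\cE\{\th(t,\cd)\}_T d\dbP$ is well-defined and $\cl W(t;s)=W(s)-\int_0^s\th(t,r)dr$ is a standard Brownian motion under $\cl\dbP_t$. The difference of the BSDEs then becomes
$$\eta_1(t,s)-\eta_2(t,s)=\int_s^T[g(t,r,U_1(r),\z_2(t,r))-g(t,r,U_2(r),\z_2(t,r))]dr-\int_s^T(\z_1-\z_2)d\cl W(t;r),$$
and taking $\dbE^{\cl\dbP_t}_s$ together with the Lipschitz bound from \ref{A2} yields
$$|\eta_1(t,s)-\eta_2(t,s)|\les L(T-s)\|U_1-U_2\|_{L^\i_\dbF(T-\e,T)}\les L\e\|U_1-U_2\|_{L^\i_\dbF(T-\e,T)},\q s\in[t,T].$$
Setting $s=t$ delivers $\|Y_1-Y_2\|_{L^\i_\dbF(T-\e,T)}\les L\e\|U_1-U_2\|_{L^\i_\dbF(T-\e,T)}$.

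For the BMO estimate on $Z_1-Z_2$, I would apply It\^o's formula to $s\mapsto(\eta_1(t,s)-\eta_2(t,s))^2$ under $\cl\dbP_t$ on $[\t,T]$ for an arbitrary $\t\in\sT[t,T]$ and take $\dbE^{\cl\dbP_t}_\t$; the martingale term vanishes in expectation because $\eta_1-\eta_2$ is bounded and $\z_1-\z_2\in\cl{\rm BMO}_{\cl\dbP_t}(t,T)$. Combined with the $L^\i$-bound above, this gives
$$\dbE^{\cl\dbP_t}_\t\int_\t^T|\z_1-\z_2|^2 dr\les 2L\,\dbE^{\cl\dbP_t}_\t\int_\t^T|\eta_1-\eta_2|\cdot|U_1-U_2|dr\les 2L^2\e^2\|U_1-U_2\|_{L^\i_\dbF(T-\e,T)}^2,$$
i.e.\ $\|\z_1(t,\cd)-\z_2(t,\cd)\|^2_{\cl{\rm BMO}_{\cl\dbP_t}(t,T)}\les 2L^2\e^2\|U_1-U_2\|^2_{L^\i_\dbF(T-\e,T)}$. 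Applying \autoref{lemma-BMO} with the uniform BMO bound $K$ then converts this back to $\dbP$ with a conversion constant depending only on $K$; taking essential supremum over $t\in[T-\e,T]$ gives
$$\|Z_1-Z_2\|^2_{\cl{\rm BMO}(\D[T-\e,T])}\les C\e^2\|U_1-U_2\|^2_{L^\i_\dbF(T-\e,T)}$$
for a constant $C=C(L,\g,A,T)$. Combining both estimates shows the product-norm Lipschitz constant of $\G$ on $\cB_\e$ is of order $\e$; further shrinking $\e$ (while preserving $\e\les1/(2L)$) makes $\G$ a strict contraction, and Banach's theorem concludes. The main technical obstacle is guaranteeing that the Kazamaki conversion constants in \autoref{lemma-BMO} can be chosen uniformly in the parameter $t$, which is exactly what the uniform BMO bound on $\z_i(t,\cd)$ from \autoref{le-l-d-Gamma-e-b} secures.
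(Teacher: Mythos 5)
Your proposal is correct and follows essentially the same route as the paper: the Lipschitz decomposition of $g$ in $z$ producing $\th(t,\cd)$, the uniform BMO bound on $\th$ inherited from \autoref{le-l-d-Gamma-e-b}, the Girsanov change to $\cl\dbP_t$, the resulting estimate of order $\e\,\|U_1-U_2\|_{L^\i_\dbF(T-\e,T)}$ for both components, and the Kazamaki conversion (\autoref{lemma-BMO}) back to $\dbP$ with constants uniform in $t$. The only cosmetic difference is that the paper obtains the $Y$- and $Z$-difference bounds simultaneously by squaring the difference identity and taking $\dbE^{\cl\dbP_t}_s$, whereas you first bound $|\eta_1-\eta_2|$ and then apply It\^o's formula to its square — the same computation packaged in two steps.
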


\begin{proof}
Let $\e\in(0,{1\over2L}]$. For any $(U(\cd),V(\cd\,,\cd)),(\wt U(\cd),\wt V(\cd\,,\cd))\in\cB_\e$, set
\bel{pro-l-d-Gamma-e-c-1}
(Y(\cd),Z(\cd,\cd))=\G(U(\cd),V(\cd\,,\cd))\q\hbox{and}\q(\wt Y (\cd),\wt Z(\cd))=\G(\wt U(\cd),\wt V(\cd\,,\cd));
\ee
that is,
\begin{align}
\label{pro-l-d-Gamma-e-c-2} \eta(t,s)&=\psi(t)+\int_s^T g(t,r,U(r),\z(t,r))ds-\int_s^T\z(t,r)dW(r),\\
\label{pro-l-d-Gamma-e-c-3} \wt\eta(t,s)&=\psi(t)+\int_s^T g(t,r,\wt U(r),\wt\z(t,r))dr-\int_s^T\wt\z(t,r)dW(r),
\end{align}
and
\bel{pro-l-d-Gamma-e-c-4}
Y(t)=\eta(t,t),~\wt Y (t)=\wt\eta(t,t),\q Z(t,r)=\z(t,r),~\wt Z(t,r)=\wt\z(t,r).
\ee
By \autoref{le-l-d-Gamma-e-b}, $(Y(\cd),Z(\cd\,,\cd))$ and $(\wt Y(\cd),
\wt Z(\cd\,,\cd))\in\cB_\e$. By \ref{A2}, for almost all $t\in[T-\e,T]$,  we can define the process $\th(t,\cd)$ in an obvious way such that:
\begin{align}
\label{pro-l-d-Gamma-e-c-5-1} &\th(t,s)=0, \q(t,s)\in[T-\e,T]\times[0,t],\\
\label{pro-l-d-Gamma-e-c-5-2}  &|\th(t,s)|\les L(1+|\z(t,s)|+|\wt\z(t,s)|),\q (t,s)\in\D[T-\e,T],\\
\label{pro-l-d-Gamma-e-c-5-3}    &g(t,s,\wt U(s),\z(t,s))-g(t,s,\wt U(s),\wt\z(t,s))=[\z(t,s)-\wt\z(t,s)]\th(t,s).
\end{align}
Note that $(Y(\cd),\z(\cd\,,\cd)),(\wt Y(\cd),\wt\z(\cd\,,\cd))\in\cB_\e$. Thus, by \rf{pro-l-d-Gamma-e-c-5-1}--\rf{pro-l-d-Gamma-e-c-5-2},
\begin{align}
\nn\|\th(\cd,\cd)\|^2_{\cl{\rm BMO}(\D[T-\e,T])}&\les 3L^2T+3L^2\|\z(\cd,\cd)\|^2_{\cl{\rm BMO}(\D[T-\e,T])}+3L^2\|\wt\z(\cd,\cd)\|^2_{\cl{\rm BMO}(\D[T-\e,T])}]\\
&\les3L^2T+6L^2A.
\end{align}
Thus, for almost all $t\in[T-\e,T]$, $\int_0^s\th(t,r)dW(r); 0\les s\les T$ is a BMO martingale and
\bel{pro-1-d-Gamma-e-c-6}
\left\|\int_0^\cd\th(t,r)dW(r)\right\|^2_{{\rm BMO}(0,T)}\les3L^2T+6L^2A.
\ee
By \autoref{lemma-Girsanov}, $W(t;\cdot)$ defined by
\bel{pro-l-d-Gamma-e-c-7}
 W(t;s)\deq W(s)-\int_0^s\th(t,r)dr,\q s\in[0,T]
\ee
is a Brownian motion on $[0,T]$ under the equivalent probability measure $\cl{\dbP}_t$, which is defined by
\bel{pro-l-d-Gamma-e-c-8}
d\cl{\dbP}_t\deq\cE\{\th(t,\cd)\}_{\1n_T}d\dbP.
\ee
Denote the expectation in $\bar\dbP_t$ by $\dbE^{\bar\dbP_t}$.
Combining \rf{pro-l-d-Gamma-e-c-2}, \rf{pro-l-d-Gamma-e-c-3}, and \rf{pro-l-d-Gamma-e-c-5-3}--\rf{pro-l-d-Gamma-e-c-7},
we have
\begin{align}
\nn &\eta(t,s)-\wt\eta(t,s)+\int_s^T[\z(t,r)-\wt\z(t,r)]d W(t;r)\\
\label{pro-l-d-Gamma-e-c-9}&~=\int_s^T\left[g(t,r,U(r),\z(t,r))-g(t,r,\wt U(r),\z(t,r))\right]dr.
\end{align}
Taking square and then taking conditional expectation with respect to
$\bar\dbP_t$ on the both sides of the above equation, we have (noting
$T-\e\les t\les s\les T$)
\begin{align}
\nn                         &|\eta(t,s)-\wt\eta(t,s)|^2+\dbE^{\bar\dbP_t}_s\[\int_s^T
|\z(t,r)-\wt\z(t,r)|^2dr\]\\
\nn                         &~=\dbE^{\bar\dbP_t}_s\Big\{\[\int_s^T\(g(t,r,U(r),\z(t,r))-g(t,r,\wt U(r),\z(t,r))\)dr\]^2\Big\}\\
\label{pro-l-d-Gamma-e-c-10}&~\les\dbE^{\bar\dbP_t}_s\Big\{\[\int_s^T
\(L|U(r)-\wt U(r)|\)dr\]^2\Big\}\\
\nn                         &~\les L^2(T-t)^2\|U(\cd)-\wt U(\cd)\|^2_{L^\i_\dbF(T-\e,T)}\les L^2\e ^2\|U(\cd)-\wt U(\cd)\|^2_{L^\i_\dbF(T-\e,T)}.
\end{align}
Let $s=t$, by \rf{pro-l-d-Gamma-e-c-4} and \rf{pro-l-d-Gamma-e-c-10}, we have
\bel{pro-l-d-Gamma-e-c-11}
\|Y(\cd)-\wt Y(\cd)\|^2_{L^\i_\dbF(T-\e,T)}\les L^2\e^2\|U(\cd)-\wt U(\cd)\|^2_{L^\i_\dbF(T-\e,T)}.
\ee
Also, by \rf{pro-l-d-Gamma-e-c-4}, \rf{pro-l-d-Gamma-e-c-10},  \rf{pro-1-d-Gamma-e-c-6}, and \autoref{lemma-BMO}, there is a constant $C$ (which is depending on $\|\psi(\cd)\|_\i$ and is independent of $t$) such that
\begin{align}
\nn &\sup_{s\in[t,T]}\dbE_s\[\int_s^T|Z(t,r)-\ti Z(t,r)|^2dr\]= \sup_{s\in[t,T]}\dbE_s\[\int_s^T|\z(t,r)-\wt \z(t,r)|^2dr\]\\
\label{pro-l-d-Gamma-e-c-12}&\q\les C\sup_{s\in[t,T]}\dbE^{\bar\dbP_t}_s\[\int_s^T|\z(t,r)-\ti \z(t,r)|^2dr\]\les CL^2\e^2\|U(\cd)-\wt U(\cd)\|^2_{L^\i_\dbF(T-\e,T)}.
\end{align}
Thus,
\bel{pro-l-d-Gamma-e-c-13}
\|Z(\cd,\cd)-\wt Z(\cd,\cd)\|^2_{\cl{\rm BMO}(\D[T-\e,T])}\les CL^2\e^2\|U(\cd)-\wt U(\cd)\|^2_{L^\i_\dbF(T-\e,T)}.
\ee
Combining \rf{pro-l-d-Gamma-e-c-11}--\rf{pro-l-d-Gamma-e-c-13}, we see that for some small $\e>0$, the map $\G(\cd\,,\cd)$ is a contraction on the set $\cB_\e$. Hence, BSVIE \rf{bsvie-1-d} admits a unique adapted solution on $[T-\e,T]$.
\end{proof}

Let us make some comments on the above local existence of the unique adapted solution.

\vskip-1cm

\setlength{\unitlength}{.01in}
~~~~~~~~~~~~~~~~~~~~~~~~~~~~~~~~~~~~~~~~~~~~~~~\begin{picture}(230,240)
\put(0,0){\vector(1,0){170}}
\put(0,0){\vector(0,1){170}}
\put(110,0){\line(0,1){150}}
\put(150,0){\line(0,1){150}}
\put(0,110){\line(1,0){150}}
\put(0,150){\line(1,0){150}}
\thicklines
\put(0,0){\color{red}\line(1,1){150}}
\put(122,135){\makebox(0,0){$\textcircled{\small 1}$}}
\put(55,130){\makebox(0,0){$\textcircled{\small 2}$}}
\put(-10,150){\makebox(0,0)[b]{$\scriptstyle T$}}
\put(150,-12){\makebox(0,0)[b]{$\scriptstyle T$}}
\put(-15,105){\makebox(0,0)[b]{$\scriptstyle T-\e$}}
\put(105,-12){\makebox(0,0)[b]{$\scriptstyle T-\e$}}
\put(180,-5){\makebox{$t$}}
\put(0,180){\makebox{$s$}}
\put(35,80){\makebox(0,0){$\scriptstyle\D[0,T-\e]$}}
\put(75,35){\makebox(0,0){$\scriptstyle\D^*[0,T-\e]$}}
\end{picture}

\bs

\centerline{(Figure 1)}

\bs

\no We have seen that $(Y(s),Z(t,s))$ is defined for $(t,s)\in\D[T-\e,T]$, the region marked $\textcircled{\small 1}$ in the above figure. Now, for any $t\in[0,T-\e]$, we can rewrite our Type-I BSVIE as follows:
\bel{BSVIE(0,T-e)} Y(t)=\psi^{T-\e}(t)+\int_t^{T-\e}g(t,s,Y(s),Z(t,s))ds-\int_t^{T-\e}
Z(t,s)dW(s),\q t\in[0,T-\e],\ee
where
\bel{psi(T-e)}\psi^{T-\e}(t)=\psi(t)+\int_{T-\e}^Tg(t,s,Y(s),Z(t,s))ds
-\int_{T-\e}^TZ(t,s)dW(s),\q t\in[0,T-\e].\ee
If $\psi^{T-\e}(\cd)\in L^\i_{\cF_{T-\e}}(0,T-\e)$, then \rf{BSVIE(0,T-e)} is a BSVIE on $[0,T-\e]$. However, unlike BSDEs, having $(Y(s),Z(t,s))$ defined on $\D[T-\e,T]$, $\psi^{T-\e}(t); t\in[0,T-\e]$ has still not been defined yet. Since, on the right-hand side of \rf{psi(T-e)}, although $Y(s)$ with $s\in[T-\e,T]$ has already been determined, $Z(t,s)$ has not been defined for $(t,s)\in[0,T-\e]\times[T-\e,T]$, the region marked $\textcircled{\small 2}$ in the above figure, which is needed to define $\psi^{T-\e}(t)$. Moreover, we need that $\psi^{T-\e}(t)$ is $\cF_{T-\e}$-measurable (not just $\cF_T$-measurable). Hence, \rf{psi(T-e)} is actually a {\it stochastic Fredholm integral equation} (SFIE, for short) to be solved to determine $\psi^{T-\e}(t);t\in[0,T-\e]$.

\ms

Now, we are at the position to prove \autoref{thm-bsvie-1-d-exist-unique}.

\ms

\it \textbf{Proof of \autoref{thm-bsvie-1-d-exist-unique}}. \rm  The proof will be divided into three steps.

\ss

{\bf Step 1:} {\it Estimate of $|Y(\cdot)|^2$.}

\ms

For given $\psi(\cd)\in L^\infty_{\cF_T}(0,T)$, we can find a constant $\wt C>0$ such that $\|\psi(\cd)\|_\i^2\les\wt C$ and (by \ref{A2})
\bel{thm-bsvie-1-d-exist-unique-1}
|2xg(t,s,y,0)|\les\wt C+\wt C|x|^2+\wt C|y|^2,\q\forall (t,s,x,y)\in\D[0,T]\times\dbR\times\dbR.
\ee
Let us consider the following (integral form of) ordinary differential equation:
\bel{thm-bsvie-1-d-exist-unique-2}
\a(t)=\wt C+\int_t^T\wt C\a(s)ds+\int_t^T\wt C[\a(s)+1]ds,\q t\in[0,T].
\ee
It is easy to see that the unique solution to the above ordinary differential equation is given by
$$\a(t)=\(\wt C+{1\over2}\)e^{2\wt C(T-t)}-{1\over2},\q t\in[0,T],$$
which is a (continuous) decreasing function. Thus,
$$\|\psi(\cd)\|_\i^2\les\wt C=\a(T)\les\a(0).$$
By \autoref{pro-l-d-Gamma-e-c}, there exists an $\e>0$ (depending on $\|\psi(\cd)\|_\i$) such that $\G(\cd\,,\cd)$ defined by \rf{1-d-gamma} is a contraction on $\cB_\e$. Therefore, a Picard iteration sequence converges to the unique adapted solution $(Y(\cd),Z(\cd,\cd))$ of the BSVIE on $[T-\e,T]$. Namely, if we define:
\bel{thm-bsvie-1-d-exist-unique-4}
 \left\{\begin{aligned}
            (Y^0(\cd),Z^0(\cd,\cd))&=0,\\
            (Y^{k+1}(\cd),Z^{k+1}(\cd,\cd))&=\G(Y^k(\cd),Z^k(\cd,\cd)),\q k\ges0;
\end{aligned}\right.\ee
that is,
\begin{align*}
&(Y^0(\cd),Z^0(\cd,\cd))=0,\\
&\eta^{k+1}(t,s)=\psi(t)+\int_s^T g(t,r,Y^k(r),\z^{k+1}(t,r))dr-\int_s^T\z^{k+1}(t,r)dW(r),\\
&Y^{k+1}(t)=\eta^{k+1}(t,t),\q Z^{k+1}(t,s)=\z^{k+1}(t,s),\q (t,s)\in\D[T-\e,T],
\end{align*}
then
\bel{thm-bsvie-1-d-exist-unique-5}
\lim_{k\to\i}\|(Y^k(\cd),Z^k(\cd,\cd))-(Y(\cd),Z(\cd,\cd))\|_{
L^\i_\dbF(T-\e,T)\times\cl{\rm BMO}(\D[T-\e,T])}=0.
\ee
Next, for almost all $t\in[T-\e,T]$, similar to \rf{pro-l-d-Gamma-e-c-5-2}, \rf{pro-l-d-Gamma-e-c-5-3}, \rf{pro-l-d-Gamma-e-c-7}, and \rf{pro-l-d-Gamma-e-c-8},
there exists a process $\th^{k+1}(t,\cd)$ such that
\bel{thm-bsvie-1-d-exist-unique-6}
g(t,r,Y^k(r),\z^{k+1}(t,r))-g(t,r,Y^k(r),0)=\z^{k+1}(t,r)\th^{k+1}(t,r),
\ee
and
\bel{thm-bsvie-1-d-exist-unique-7}
W^{k+1}(t;s)\deq W(s)-\int_0^s\th^{k+1}(t,r)dr,\q s\in[0,T]
\ee
is a Brownian motion on $[0,T]$ under the corresponding equivalent probability measure $\dbP^{k+1}_t$ defined by
$$\dbP_t^{k+1}=\cE\{\th^{k+1}(t,\cd)\}_{\1n_T}d\dbP.$$
For simplicity, we denote $\dbP^{k+1}_t$ by  $\dbP^{k+1}$ here, suppressing the subscript $t$. The corresponding expectation is denoted by $\dbE^{k+1}$.
It follows that
\begin{align}
                         \nn\eta^{k+1}(t,s)&=\psi(t)+\int_s^Tg(t,r,Y^k(r),\z^{k+1}(t,r))dr-\int_s^T\z^{k+1}(t,r)dW(r),\\
 \label{thm-bsvie-1-d-exist-unique-8}      &=\psi(t)+\int_s^Tg(t,r,Y^k(r),0)dr-\int_s^T\z^{k+1}(t,r)dW^{k+1}(t;r).
\end{align}
Applying the It\^o formula to the map $s\mapsto|\eta^{k+1}(t,s)|^{2}$ and taking conditional expectation  $\dbE^{k+1}_\t=\dbE^{k+1}[\,\cd\,|\,\cF_\t]$ for any $\t\in[T-\e,s]$, by \rf{thm-bsvie-1-d-exist-unique-1}, we have
\begin{align}
\nn& \dbE_\t^{k+1}\[|\eta^{k+1}(t,s)|^2\]+\dbE_\t^{k+1}\[\int_s^T|\z^{k+1}
(t,r)|^2dr\]\\
\label{thm-bsvie-1-d-exist-unique-9}&~=\dbE_\t^{k+1}\[|\psi(t)|^2\]
+\dbE_\t^{k+1}\[\int_s^T2\eta^{k+1}(t,r)g(t,r,Y^k(r),0)dr\]\\
\nn&~\les\wt C+\wt C\int_s^T\dbE_\t^{k+1}\[|\eta^{k+1}(t,r)|^2\]dr+\wt C\int_s^T\Big\{\dbE_\t^{k+1}\[|Y^{k}(r)|^2\]+1\Big\}dr.
\end{align}
We now prove the following inequality by induction:
\bel{thm-bsvie-1-d-exist-unique-10}
|Y^k(t)|^2\les\a(t),\q t\in[T-\e,T],\q\hbox{for any}~k\ges0.
\ee
In fact, by \rf{thm-bsvie-1-d-exist-unique-4}, it is obvious to see $|Y^0(t)|^2=0\les\a(t)$.
Suppose $|Y^k(t)|^2\les\a(t)$ for any $t\in[T-\e,T]$, then
\bel{thm-bsvie-1-d-exist-unique-11}
\dbE_\t^{k+1}\[|\eta^{k+1}(t,s)|^2\]\les\wt C+\wt C\int_s^T\dbE_\t^{k+1}\[|\eta^{k+1}(t,r)|^2\]dr+\wt C\int_s^T[\a(r)+1]dr.
\ee
In light of \rf{thm-bsvie-1-d-exist-unique-2}, by the comparison theorem of ordinary differential equations, we have
\bel{thm-bsvie-1-d-exist-unique-12}
\dbE_\t^{k+1}\[|\eta^{k+1}(t,s)|^2\]\les\a(s).
\ee
Let $\t=s$ and $s=t$, we have
\bel{thm-bsvie-1-d-exist-unique-13}
|Y^{k+1}(t)|^2\les\a(t),\q t\in[T-\e,T].
\ee
Thus, by induction, \rf{thm-bsvie-1-d-exist-unique-10} holds. Then by \rf{thm-bsvie-1-d-exist-unique-5},  we have
\bel{thm-bsvie-1-d-exist-unique-14}
|Y(t)|^{2}\les\a(t),\q t\in[T-\e,T].
\ee

\ss

{\bf Step 2:} {\it A related stochastic Fredholm integral equation is solvable on $[0,T-\e]$.}

\ms

We now solve SFIE \rf{psi(T-e)} on $[0,T-\e]$. Let us introduce a family of BSDEs parameterized by $t\in[0,T-\e]$:
\bel{thm-bsvie-1-d-exist-unique-16}
\eta(t,s)=\psi(t)+\int_s^T g(t,r,Y(r),\z(t,r))dr-\int_s^T\z(t,r)dW(r),\q s\in[T-\e,T].
\ee
By \autoref{lemma-briand-hu}, the above BSDE admits a unique adapted solution $(\eta(t,\cd),\z(t,\cd))$ on $[T-\e,T]$.
Note that \rf{thm-bsvie-1-d-exist-unique-14}, similar to \rf{thm-bsvie-1-d-exist-unique-12}, we have
\bel{thm-bsvie-1-d-exist-unique-171}
|\eta(t,s)|^2\les\a(s),\q s\in[T-\e,T].
\ee
Similar to \rf{Z<A}, we have
\bel{thm-bsvie-1-d-exist-unique-172}
\esssup_{t\in[0,T-\e]}\|\z(t,\cd)\|^2_{\cl{\rm BMO}([T-\e,T])}<\i.
\ee
Let $\psi^{T-\e}(t)=\eta(t,T-\e)$ and $Z(t,s)=\z(t,s)$,
we have $(\psi^{T-\e}(\cd),Z(\cd,\cd))\in L^\i_{\cF_{T-\e}}(0,T-\e)\times\cl{\rm BMO}([0,T-\e]\times[T-\e,T])$
and $(\psi^{T-\e}(\cd),Z(\cd,\cd))$ is a solution to SFIE \rf{psi(T-e)}. Moreover, by \rf{thm-bsvie-1-d-exist-unique-171}, we have
\bel{thm-bsvie-1-d-exist-unique-18}
|\psi^{T-\e}(t)|^2=|\eta(t,T-\e)|^2\les\a(T-\e)\les\a(0),\q t\in[0,T-\e].
\ee
Next, we will prove the solution to SFIE \rf{psi(T-e)} is unique.
Let
$$\ba{ll}
\ns\ds(\psi^{T-\e}(\cd),Z(\cd,\cd)),~(\wt\psi^{T-\e}(\cd),\wt Z(\cd,\cd))\in L^\i_{\cF_{T-\e}}(0,T-\e)\times\cl{\rm BMO}([0,T-\e]\times[T-\e,T]).\ea$$
be two solutions to SFIE \rf{psi(T-e)}. Then
\begin{align}
\label{thm-bsvie-1-d-exist-unique-19}\psi^{T-\e}(t)-\wt\psi^{T-\e}(t)&
=\int_{T-\e}^T\[g(t,s,Y(s),Z(t,s))-g(t,s,Y(s),\wt Z(t,s))\]ds\\
\nn&\hp{=\ }-\int_{T-\e}^T\[Z(t,s)-\wt Z(t,s)\]dW(s),\q t\in[0,T-\e].
\end{align}
For almost all $t\in[0,T-\e]$, similar to \rf{pro-l-d-Gamma-e-c-5-2}, \rf{pro-l-d-Gamma-e-c-5-3}, \rf{pro-l-d-Gamma-e-c-7}, and \rf{pro-l-d-Gamma-e-c-8}, there is a process $\wt\th(t,\cd)$ such that:
\bel{thm-bsvie-1-d-exist-unique-20}
g(t,s,Y(s),Z(t,s))-g(t,s,Y(s),\wt Z(t,s))=[Z(t,s)-\wt Z(t,s)]\wt\th(t,s),
\ee
and
\bel{thm-bsvie-1-d-exist-unique-21}
\cl W (t;s)\deq W(s)-\int_0^s\wt\th(t,r)dr,\q s\in[0,T]
\ee
is a Brownian motion on $[0,T]$ under the corresponding equivalent probability measure $\cl\dbP_t$. The corresponding expectation is denoted by $\dbE^{\bar\dbP_t}$. Combining \rf{thm-bsvie-1-d-exist-unique-19}--\rf{thm-bsvie-1-d-exist-unique-21}, we have
\bel{thm-bsvie-1-d-exist-unique-22}
\psi^{T-\e}(t)-\wt\psi^{T-\e}(t)=-\int_{T-\e}^T\[Z(t,s)-\wt Z(t,s)\]d\cl W(t;s),\q t\in[0,T-\e].
\ee
Taking conditional expectation $\dbE^{\bar\dbP_t}_{T-\e}[\,\cd\,]\equiv\dbE^{\bar\dbP_t}[\,\cd\,|\,
\cF_{T-\e}]$ on the both sides of the equation \rf{thm-bsvie-1-d-exist-unique-22}, we have
\bel{thm-bsvie-1-d-exist-unique-23}
\dbE^{\bar\dbP_t}_{T-\e}\[\psi^{T-\e}(t)-\wt\psi^{T-\e}(t)\]
=0, \q t\in[0,T-\e].
\ee
Note that $\psi^{T-\e}(t)$ is $\cF_{T-\e}$-adapted for any $t\in[0,T-\e]$. It follows that
\bel{thm-bsvie-1-d-exist-unique-24}
\psi^{T-\e}(t)=\wt\psi^{T-\e}(t),\q\as,~t\in[0,T-\e].
\ee
By \rf{thm-bsvie-1-d-exist-unique-22}--\rf{thm-bsvie-1-d-exist-unique-24}, we have
\bel{thm-bsvie-1-d-exist-unique-25}
\int_{T-\e}^T\left[Z(t,s)-\wt Z(t,s)\right]d\cl W(t;s)=0,\q t\in[0,T-\e],
\ee
which implies
\bel{thm-bsvie-1-d-exist-unique-26}
Z(t,s)=\wt Z(t,s),\q\as,~(t,s)\in[0,T-\e]\times[T-\e,T].
\ee
Combining \rf{thm-bsvie-1-d-exist-unique-24}--\rf{thm-bsvie-1-d-exist-unique-26}, SFIE \rf{psi(T-e)} admits a unique solution.

\ss

{\bf Step 3:} {\it Complete the proof by induction.}

\ms
Combining Steps 1 and 2, we have uniquely determined
\bel{thm-bsvie-1-d-exist-unique-27}\left\{\begin{aligned}
   Y(t),\q &\q t\in[T-\e,T],\\
   Z(t,s),&\q (t,s)\in \D[T-\e,T]\bigcup\([0,T-\e]\times[T-\e,T]\).
\end{aligned}\right.\ee
Now, we consider BSVIE \rf{BSVIE(0,T-e)} on $[0,T-\e]$.
By \rf{thm-bsvie-1-d-exist-unique-18}, we see that the above procedure can be repeated. We point out that the introduction of $\a(\cd)$ is to uniformly control the terminal state $\psi(T-\e)$, $\psi(T-2\e)$, etc. Then we can use induction to finish the proof of the existence and uniqueness of adapted solution to BSVIE \rf{bsvie-1-d}.$\hfill\qed$

\ms

\begin{remark}\rm When the terminal condition $\psi(\cd)$ is bounded,  the well-posedness of QBSVIE  \rf{bsvie-1-d} is established by \autoref{thm-bsvie-1-d-exist-unique}. If $\psi(\cd)$ is unbounded, the unboundedness of $\psi(\cd)$ will bring some essential difficulties in establishing the solvability of QBSVIE \rf{bsvie-1-d}. At the moment, we are not able to overcome the difficulties. We hope to come back in our future publications.
\end{remark}

We now would like to look at some better regularity for the adapted solution of BSVIEs under the additional condition \ref{A3}.

\begin{theorem}\label{thm-bsvie-exist-unique-c} \sl
Let {\rm\ref{A2}}--{\rm\ref{A3}} hold. Then for any $\psi(\cd)\in L^\i_{\cF_T}(\Om;C^U[0,T])$, BSVIE \rf{bsvie-1-d} admits a unique adapted solution $(Y(\cd),Z(\cd,\cd))\in L_\dbF^\i(\Om;C[0,T])\times\cl{\rm BMO}(\D[0,T])$.
\end{theorem}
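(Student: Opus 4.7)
Since $L^\i_{\cF_T}(\Om;C^U[0,T])\subset L^\i_{\cF_T}(0,T)$, \autoref{thm-bsvie-1-d-exist-unique} already delivers a unique adapted solution $(Y,Z)\in L^\i_\dbF(0,T)\times\cl{\rm BMO}(\D[0,T])$, and the new content of this theorem is the upgrade $Y\in L^\i_\dbF(\Om;C[0,T])$. The plan is to work with the auxiliary parameterization $Y(t)=\eta(t,t)$ already used in the proof of \autoref{thm-bsvie-1-d-exist-unique}: for each $t\in[0,T]$, \autoref{lemma-briand-hu} provides the unique adapted solution $(\eta(t,\cd),\z(t,\cd))\in L_\dbF^\i(\Om;C[t,T])\times\cl{\rm BMO}(t,T)$ of
$$
\eta(t,s)=\psi(t)+\int_s^T g(t,r,Y(r),\z(t,r))\,dr-\int_s^T\z(t,r)\,dW(r),\qq s\in[t,T],
$$
with the previously constructed $Y$ plugged into the generator. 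The estimates from Step 1 of the proof of \autoref{thm-bsvie-1-d-exist-unique} yield uniform-in-$t$ bounds $\|\eta(t,\cd)\|_\i\les M$ and $\|\z(t,\cd)\|^2_{\cl{\rm BMO}(t,T)}\les K$.

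For $0\les t_1<t_2\les T$ I would use the decomposition
$$
Y(t_2)-Y(t_1)=\bigl[\eta(t_2,t_2)-\eta(t_1,t_2)\bigr]+\bigl[\eta(t_1,t_2)-\eta(t_1,t_1)\bigr].
$$
Both $\eta(t_1,\cd)$ and $\eta(t_2,\cd)$ are defined on $[t_2,T]$; their difference $\d\eta$ solves a BSDE whose driver I linearize by writing, via \ref{A2}, $g(t_1,r,Y,\z(t_1))-g(t_2,r,Y,\z(t_2))=\th(r)[\z(t_1,r)-\z(t_2,r)]+R(r)$, where $|\th(r)|\les L(1+|\z(t_1,r)|+|\z(t_2,r)|)$ (so $\int_0^\cd\th\,dW$ is BMO with norm uniformly controlled by $K$) and, by \ref{A3}, $|R(r)|\les\rho(|t_1-t_2|)(1+|Y(r)|+|\z(t_2,r)|^2)$. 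Applying \autoref{lemma-Girsanov} to the equivalent measure $\cl\dbP_{t_1}$ generated by $\th$, and \autoref{lemma-BMO} to preserve the BMO norm of $\z(t_2,\cd)$ under $\cl\dbP_{t_1}$ uniformly in $(t_1,t_2)$, then taking the $\cl\dbP_{t_1}$-conditional expectation at $s=t_2$ kills the martingale and gives the pathwise bound
$$
|\eta(t_1,t_2)-\eta(t_2,t_2)|\les\rho_\psi(|t_1-t_2|)+C_0\,\rho(|t_1-t_2|)\q\as,
$$
where $\rho_\psi$ is the deterministic modulus of continuity of $\psi$ supplied by the $C^U$-space and $C_0$ depends only on $M,K,L,\g,T$. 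Thus the first bracket carries a deterministic modulus, uniform in $(t_1,t_2)$. The second bracket equals $\int_{t_1}^{t_2}g(t_1,r,Y(r),\z(t_1,r))\,dr-\int_{t_1}^{t_2}\z(t_1,r)\,dW(r)$, which tends to $0$ a.s.\ as $t_2\to t_1$ by the a.s.\ $s$-continuity of $\eta(t_1,\cd)$.

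The delicate point is promoting these pointwise a.s.\ estimates to an a.s.\ modulus of continuity for $t\mapsto Y(t)$ valid on a single full-measure event. I would establish, via BDG and the John--Nirenberg property of BMO applied uniformly to the family $\{\z(t,\cd)\}_{t\in[0,T]}$, an $L^p$-estimate of the form $\dbE|\eta(t,t)-\eta(t,t')|^p\les C_p|t-t'|^{p/2}$ for some $p>2$, and combine it with the deterministic first-bracket modulus to invoke Kolmogorov's continuity criterion; this produces an a.s.\ continuous modification of $Y$, to which the uniform sup-bound $\|Y\|_\i\les M$ then transfers, yielding $Y\in L^\i_\dbF(\Om;C[0,T])$. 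The principal obstacle is the John--Nirenberg step on short intervals: the BMO norm directly controls $\dbE\bigl[\int_{t'}^T|\z(t,r)|^2dr\bigm|\cF_{t'}\bigr]$ but not the corresponding integral over $[t',t]$ with a quantitative rate in $|t-t'|$, so one must complement the BMO bound with the absolute continuity of $r\mapsto\int_{t'}^r|\z(t,u)|^2du$ and exploit its exponential integrability to extract the needed rate.
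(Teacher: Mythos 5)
Your core estimate is exactly the paper's: the same parameterized family $\eta(t,s)$, the same linearization in $z$ producing a BMO driver $\th$, the same Girsanov change of measure together with \autoref{lemma-BMO} to keep the BMO norm of $\z(t_2,\cd)$ under control, and the same use of \ref{A3} and the modulus of $\psi$ to get the deterministic bound $|\eta(t_1,s)-\eta(t_2,s)|\les C\,\rho(|t_1-t_2|)$ uniformly in $s$. Up to that point the proposal is correct and faithful to the paper.

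Where you diverge is in assembling these facts into continuity of $t\mapsto\eta(t,t)$, and there your route has a gap that the paper's does not. You try to run Kolmogorov's continuity criterion, which forces you to produce a quantitative rate $\dbE|\eta(t,t)-\eta(t,t')|^p\les C_p|t-t'|^{p/2}$ for the second bracket $\eta(t_1,t_2)-\eta(t_1,t_1)=\int_{t_1}^{t_2}g(t_1,r,Y(r),\z(t_1,r))\,dr-\int_{t_1}^{t_2}\z(t_1,r)\,dW(r)$. This requires $\dbE\bigl[\bigl(\int_{t_1}^{t_2}|\z(t_1,r)|^2dr\bigr)^{q}\bigr]\les C|t_2-t_1|^{\a}$ for some $\a>0$, and neither the BMO bound, nor John--Nirenberg, nor the exponential integrability you invoke yields any polynomial decay in $|t_2-t_1|$; absolute continuity of $r\mapsto\int|\z(t_1,u)|^2du$ gives convergence to zero without a rate. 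You correctly flag this as the principal obstacle but do not close it. The point is that no rate is needed: the paper simply observes that (i) the first bracket has a \emph{deterministic} modulus $C\rho(|t_1-t_2|)$ uniform in $s$, so $\sup_s|\eta(t_1,s)-\eta(t_2,s)|\to0$ a.s.\ as $|t_1-t_2|\to0$, and (ii) for each fixed $t$, $s\mapsto\eta(t,s)$ is already a.s.\ continuous because $(\eta(t,\cd),\z(t,\cd))\in L^\i_\dbF(\Om;C[0,T])\times\cl{\rm BMO}$ by \autoref{lemma-briand-hu}. Writing $|\eta(t',s')-\eta(t,s)|\les\sup_{\si}|\eta(t',\si)-\eta(t,\si)|+|\eta(t,s')-\eta(t,s)|$ then gives joint continuity of $(t,s)\mapsto\eta(t,s)$, hence continuity of $Y(t)=\eta(t,t)$, with no $L^p$ estimate or modification argument at all. (The only remaining technicality --- upgrading the per-pair a.s.\ bound in (i) to a single full-measure event --- is handled by the deterministic constant via a countable dense set of parameters, not by Kolmogorov.) So: replace your last step by this elementary two-term splitting and the proof is complete; as written, the Kolmogorov step is both unnecessary and unproved.
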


\begin{proof} Without loss of generality, let us assume that
$$|\psi(t')-\psi(t)|\les\rho(|t-t^\prime|),\q \forall~t,t'\in[0,T],$$
with the same modulus of continuity $\rho(\cd)$ given in \ref{A3}.

\ms

By \autoref{thm-bsvie-1-d-exist-unique}, BSVIE \rf{bsvie-1-d} admits a unique adapted solution $(Y(\cd),Z(\cd,\cd))\in L_\dbF^\i(0,T)\times \cl{\rm BMO}(\D[0,T])$. We just need to prove that $Y(\cd)\in L_\dbF^\i(\Om;C[0,T])$, i.e., $Y(\cd)$ is continuous. Consider the following family of BSDEs (parameterized by $t\in[0,T]$):
\bel{thm-a-con-1}
\eta(t,s)=\psi(t)+\int_s^Tg(t,r,Y(r),\z(t,r))dr-\int_s^T\z(t,r)dW(r),
\q s\in[0,T].
\ee
By \autoref{lemma-briand-hu}, for any $t\in[0,T]$, BSDE \rf{thm-a-con-1} admits a unique adapted solution $(\eta(t,\cd),\z(t,\cd))\in L_\dbF^\i(\Om;C[0,T])\times\cl{\rm BMO}(0,T)$.
By \autoref{thm-bsvie-1-d-exist-unique}, we have $Y(t)=\eta(t,t)$, $Z(t,s)=\z(t,s)$ for any $(t,s)\in\D[0,T]$.
Now, let $0\les t<t'\les T$. Similar to \rf{pro-l-d-Gamma-e-c-5-2}, \rf{pro-l-d-Gamma-e-c-5-3}, \rf{pro-l-d-Gamma-e-c-7}, and \rf{pro-l-d-Gamma-e-c-8}, there is a process $\th(t,t';\cd)$ such that
\bel{thm-a-con-2}
g(t',s,Y(s),\z(t,s))-g(t',s,Y(s),\z(t',s))=[\z(t,s)-\z(t',s)]\th(t,t';s),
\ee
and
\bel{thm-a-con-3}
 W (t,t^\prime;s)\deq W(s)-\int_0^s\th(t,t^\prime;r)dr,\q s\in[0,T]
\ee
is a Brownian motion on $[0,T]$ under the corresponding equivalent probability measure $\dbP_{t,t'}$.
The corresponding expectation is denoted by $\dbE^{\dbP_{t,t'}}$.
Combining \rf{thm-a-con-1}, \rf{thm-a-con-2}, and \rf{thm-a-con-3}, we have
\begin{align*}
\eta(t,s)-\eta(t^\prime,s)&=\psi(t)-\psi(t^\prime)-\int_s^T[\z(t,r)-\z(t^\prime,r)]dW(t,t^\prime;r)\\
                        &{\hp=\ } +\int_s^T[g(t,r,Y(r),\z(t,r))-g(t^\prime,r,Y(r),\z(t,r))]dr.
\end{align*}
Taking conditional expectation $\dbE_s^{\dbP_{t,t'}}[\,\cd\,]\equiv\dbE_s^{\dbP_{t,t'}}[\,\cd\,|\cF_s]$ on the both sides of the above equation, we have
\begin{align*}
\eta(t,s)-\eta(t',s)&=\dbE_s^{\dbP_{t,t'}}\[\psi(t)-
                        \psi(t')+\int_s^T\(g(t,r,Y(r),\z(t,r))
-g(t',r,Y(r),\z(t,r))\)dr\].
\end{align*}
Combining this with {\rm\ref{A3}}, by \autoref{lemma-BMO}, we have
\begin{align*}
|\eta(t,s)-\eta(t',s)|&\les\dbE_s^{\dbP_{t,t'}}\[|\psi(t)
                        -\psi(t')|+\int_s^T|g(t,r,Y(r),
\z(t,r))-g(t',r,Y(r),\z(t,r))|dr\]\\
                        &\les \rho(|t-t^\prime|)+\rho(|t-t^\prime|)\dbE_s^{\dbP_{t,t'}}\[\int_s^T(1+|Y(s)|+|\z(t,r)|)dr\]\\
                        & \les C (1+\|Y(\cd)\|_{L_{\dbF}^\i(0,T)}) \rho(|t-t^\prime|)+C\rho(|t-t^\prime|)\dbE_s^{\dbP_{t,t'}}\[\int_s^T|\z(t,r)|^2dr\]\\
                         & \les C (1+\|Y(\cd)\|_{L_{\dbF}^\i(0,T)}) \rho(|t-t^\prime|)+C\rho(|t-t^\prime|)\|\z(t,\cd)\|_{\cl{\rm BMO}_{\dbP_{t,t'}}(t,T)}\\
                         & \les C (1+\|Y(\cd)\|_{L_{\dbF}^\i(0,T)}) \rho(|t-t^\prime|)+C\rho(|t-t^\prime|)\|\z(t,\cd)\|_{\cl{\rm BMO}_{\dbP}(t,T)}\\
                         & \les C (1+\|Y(\cd)\|_{L_{\dbF}^\i(0,T)}+\|\z(\cd,\cd)\|_{\cl{\rm BMO}_{\dbP}(\D[0,T])}) \rho(|t-t^\prime|)\\
                         &= C (1+\|Y(\cd)\|_{L_{\dbF}^\i(0,T)}+\|Z(\cd,\cd)\|_{\cl{\rm BMO}_{\dbP}(\D[0,T])}) \rho(|t-t^\prime|),
\end{align*}
where $C>0$ is  a generic constant (which could be different from line to line).
This leads to
$$\lim_{|t-t'|\to0}\[\sup_{s\in[0,T]}|\eta(t,s)-\eta(t',s)|\]=0,~\as $$
On the other hand, since $\eta(t,\cd)\in L_\dbF^\i(\Om;C[0,T])$ for any $t\in[0,T]$, one has
\bel{state}\lim_{|s-s'|\to 0}|\eta(t,s)-\eta(t,s')|=0,\q\forall t\in[0,T],~\as\ee
It follows that $(t,s)\mapsto\eta(t,s)$ is continuous, i.e.,
$$\lim_{(t',s')\to(t,s)}|\eta(t',s')-\eta(t,s)|=0,\q\forall(t,s)\in[0,T]^2,~\as$$
Consequently, $t\mapsto\eta(t,t)=Y(t)$ is continuous.
\end{proof}

\section{Adapted M-solution to Type-II QBSVIE}\label{II-BSVIE}
We now consider the following one-dimensional Type-II QBSVIE:
\bel{bsvie-m} Y(t)=\psi(t)+\int_t^Tg(t,s,Y(s),Z(t,s),Z(s,t))ds-\int_t^T Z(t,s)dW(s),\qq t\in[0,T]. \ee
Since $Z(s,t)$ is presented in the generator $g(\cd)$, we shall consider the adapted M-solution. Let us first introduce the following assumption:
\begin{taggedassumption}{(A4)}\label{A4} \rm
Let the generator $g:\D[0,T]\times\dbR\times\dbR\times\dbR\times \Om\to\dbR$ be $\cB(\D[0,T]\times\dbR\times\dbR\times\dbR)\otimes\cF_T$-measurable
such that $s\mapsto g(t,s,y,z,z')$ is $\dbF$-progressively measurable on $[t,T]$ for all $(t,y,z,z')\in [0,T]\times\dbR\times\dbR\times\dbR$.
There exist two constants $L$ and $\g$ such that:
\begin{align*}
&|g(t,s,y,z,z')|\les L(1+|y|)+{\g\over 2}|z|^2,\q\forall (t,s,y,z,z')\in\D[0,T]\times\dbR\times\dbR\times\dbR;\\
&|g(t,s,y_1,z_1,z_1')-g(t,s,y_2,z_2,,z_2')|\les L\(|y_1-y_2|+(1+|z_1|+|z_2|)|z_1-z_2|+|z_1'-z_2'|\),\\
&\qq\qq\qq\qq\qq\qq\qq\qq\qq~\forall (t,s,y_i,z_i,z_i')\in\D[0,T]\times\dbR\times\dbR\times\dbR,~ i=1,2.
\end{align*}
\end{taggedassumption}

Note that in \ref{A4}, we have assumed that $z'\mapsto g(t,s,y,z,z')$ is bounded. This will allow us to use the results for Type-I QBSVIEs. Therefore, the following result can be regarded as a byproduct of the results for Type-I QBSVIEs from the previous section. The case that allowing $z'\mapsto g(t,s,y,z,z')$ to be unbounded seems to be more difficult and might be treated in our future investigations. Now, here is the main result of this section.

\begin{theorem}\label{thm-bsvie-m-exist-unique} \sl
Let  {\rm\ref{A4}} hold. Then for any $\psi(\cd)\in L^\i_{\cF_T}(0,T)$, Type-II QBSVIE \rf{bsvie-m} admits a unique adapted M-solution $(Y(\cd),Z(\cd,\cd))\in\cM^2[0,T]\bigcap \lt(L^\i_{\dbF}(0,T)\times \cl{\rm BMO}(\D[0,T])\rt)$.
\end{theorem}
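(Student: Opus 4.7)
The plan is to reduce the Type-II QBSVIE to a fixed-point problem governed by the Type-I theory developed in Section \ref{I-BSVIE}, using the same localize-and-patch scheme as in the proof of \autoref{thm-bsvie-1-d-exist-unique}. Fix $\e\in(0,{1\over2L}]$ to be chosen small, and work on $[T-\e,T]$ first. For any pair $(U(\cd),V(\cd\,,\cd))$ lying in a suitable ball inside $\cM^2[T-\e,T]\cap\lt(L^\i_\dbF(T-\e,T)\times\cl{\rm BMO}(\D[T-\e,T])\rt)$ (the analog of $\cB_\e$ from \rf{1-d-b-e}, enlarged to include the M-solution condition), I would freeze $Z(s,t)$ in the generator by plugging $V(s,t)$. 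This produces the Type-I QBSVIE
\bel{plan-reduce}
\wt Y(t)=\psi(t)+\int_t^T\wt g(t,s,\wt Y(s),\wt Z(t,s))ds-\int_t^T\wt Z(t,s)dW(s),\q t\in[T-\e,T],
\ee
where $\wt g(t,s,y,z)\deq g(t,s,y,z,V(s,t))$. Assumption \ref{A4} (together with the boundedness, hence BMO-property, of $V(\cd\,,\cd)$) guarantees that $\wt g$ satisfies \ref{A2}, so \autoref{thm-bsvie-1-d-exist-unique} yields a unique adapted solution $(\wt Y(\cd),\wt Z(\cd\,,\cd))\in L^\i_\dbF(T-\e,T)\times\cl{\rm BMO}(\D[T-\e,T])$. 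The key point is that the quadratic nonlinearity survives only in the $z$ variable, for which the Type-I theory is adequate, while the $z'$ appearance is harmless because it enters linearly through a fixed BMO process.

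Next I would extend $\wt Z(t,s)$ off the diagonal: since $\wt Y(s)\in L^\i_{\cF_s}(\Om)$, the martingale representation theorem applied to $s\mapsto\dbE_s[\wt Y(s)]-\dbE[\wt Y(s)]$ uniquely produces $\wt Z(s,t)$ for $0\les t\les s$ making $\wt Y(s)=\dbE[\wt Y(s)]+\int_{T-\e}^s\wt Z(s,t)dW(t)$, thereby fulfilling the M-solution constraint \rf{M1-solution}. Setting $\G(U,V)\deq(\wt Y,\wt Z)$ defines the fixed-point map. To show $\G$ is a contraction on the ball, I would run the estimate of \autoref{pro-l-d-Gamma-e-c} verbatim on the diagonal region $\D[T-\e,T]$: the Girsanov-type change of measure (\autoref{lemma-Girsanov}, \autoref{lemma-BMO}) controls the quadratic term, and the Lipschitz dependence in $y$ and $z'$ produces
$$\|\wt Y-\wt Y'\|_{L^\i_\dbF}^2+\|\wt Z-\wt Z'\|_{\cl{\rm BMO}(\D[T-\e,T])}^2\les C(\e^2+\e)\lt(\|U-U'\|_{L^\i_\dbF}^2+\|V-V'\|_{\cl{\rm BMO}(\D[T-\e,T])}^2\rt).$$
For the off-diagonal piece, I would use the It\^o isometry against the M-solution identity: $\dbE\int_{T-\e}^T\int_{T-\e}^s|\wt Z(s,t)-\wt Z'(s,t)|^2dtds=\dbE\int_{T-\e}^T|\wt Y(s)-\wt Y'(s)-\dbE[\wt Y(s)-\wt Y'(s)]|^2ds\les C\e\|\wt Y-\wt Y'\|_{L^\i_\dbF}^2$, and convert to the BMO norm on $[T-\e,T]\times[T-\e,T]$ using that $\wt Y$ is bounded. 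Choosing $\e$ small gives the contraction.

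To invariance-check the ball, I would combine the Briand--Hu estimate \rf{lemma-briand-hu-main*} on each BSDE in the family
$$\eta(t,s)=\psi(t)+\int_s^T g(t,r,U(r),\z(t,r),V(r,t))dr-\int_s^T\z(t,r)dW(r),\q s\in[t,T],$$
with the $\p$-transform/It\^o trick of Step 2 in the proof of \autoref{le-l-d-Gamma-e-b} to bound $\|\wt Y\|_\i$ and $\|\wt Z\|_{\cl{\rm BMO}(\D)}$; the extension $\wt Z(s,\cd)$ on $[0,s]$ is bounded in $L^2_\dbF$ (hence in BMO after enlarging the constant $A$) by the $L^\i$ bound on $\wt Y$. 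Once local existence and uniqueness on $[T-\e,T]$ is in hand, I would repeat the Fredholm argument of Step 2 in the proof of \autoref{thm-bsvie-1-d-exist-unique} to solve for the effective terminal condition $\psi^{T-\e}(\cd)$ on the next interval, noting that here the Fredholm equation acquires a harmless extra linear term in $Z(s,t)$ through the $z'$-slot, which the Lipschitz hypothesis of \ref{A4} handles directly. An ODE comparison analogous to \rf{thm-bsvie-1-d-exist-unique-2} yields a uniform a priori $L^\i$ bound on $Y(\cd)$, so that $\e$ can be kept constant across iterations, and an induction over $[T-k\e,T-(k-1)\e]$ completes the proof.

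The main obstacle I anticipate is the interplay between the M-solution constraint and the BMO framework: the extended $Z(s,t)$ for $t<s$ comes from martingale representation of $Y(s)$ and is therefore $L^2$-controlled rather than naturally BMO-controlled, whereas the contraction argument on the off-diagonal region of $[0,T]^2$ must match the quadratic (BMO-based) estimates on the diagonal region. Reconciling these two norms---so that a single contraction constant can be chosen uniformly in the ball, independently of the size of $\|V\|_{\cl{\rm BMO}}$---will be the most delicate step, and is where the assumption that $z'\mapsto g(t,s,y,z,z')$ is merely Lipschitz (not quadratic) becomes essential.
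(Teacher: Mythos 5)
Your overall architecture matches the paper's: freeze the $z'$-slot with the input $V(s,t)$, solve the resulting Type-I QBSVIE via \autoref{thm-bsvie-1-d-exist-unique}, recover the off-diagonal $Z(s,t)$ by martingale representation, run a local fixed point, and then patch via the SFIE/induction scheme. However, there is a genuine gap in the contraction step. You propose to contract in the norm of $L^\i_\dbF(T-\e,T)\times\cl{\rm BMO}(\D[T-\e,T])$, claiming a bound of the form $\|\wt Y-\wt Y'\|^2_{L^\i_\dbF}\les C(\e^2+\e)\big(\|U-U'\|^2_{L^\i_\dbF}+\|V-V'\|^2_{\cl{\rm BMO}}\big)$. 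But after the Girsanov reduction, the $z'$-Lipschitz term forces you to control, \emph{pointwise in} $(t,\om)$, the quantity
$$\dbE_t\Big[\int_t^T|V(s,t)-V'(s,t)|\,ds\Big]^2\les(T-t)\,\dbE_t\int_t^T|V(s,t)-V'(s,t)|^2\,ds,$$
in which the integration runs over the \emph{first} time variable $s$ with the second variable frozen at $t$. None of the norms in play ($\cl{\rm BMO}$ in either region, or the $\cM^2$ norm) controls this transposed integral uniformly in $(t,\om)$: the BMO-type norms bound $\dbE_\t\int_\t^\cd|\f(s,t)|^2dt$ (integration over the second variable with the first fixed), and the M-solution identity bounds $\dbE\int_r^s|z(s,t)|^2dt$ only after taking full expectation. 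Consequently the $L^\i$ contraction does not close, and your suggested fix (``convert to the BMO norm using that $\wt Y$ is bounded'') only addresses the regularity of the \emph{output} $\wt Z(s,\cd)$, not the control of the \emph{input} $V(s,t)$ inside the generator. You correctly flag this reconciliation of norms as the delicate point in your final paragraph, but you do not resolve it.

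The paper's resolution is to abandon the $L^\i\times\cl{\rm BMO}$ contraction entirely and contract instead in the equivalent $\cM^2[T-\e,T]$ norm: one integrates the pointwise estimate over $t\in[T-\e,T]$, applies Fubini to convert $\dbE\int_{T-\e}^T\int_t^T|z(s,t)-\wt z(s,t)|^2ds\,dt$ into $\dbE\int_{T-\e}^T\int_{T-\e}^s|z(s,t)-\wt z(s,t)|^2dt\,ds$, and then invokes the M-solution identity to dominate this by $\dbE\int_{T-\e}^T|y(s)-\wt y(s)|^2ds$ (see \rf{th-m-yz-tiyz-in-contra}--\rf{th-m-yz-tiyz-in-contra1}). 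The membership of the fixed point in $L^\i_\dbF(0,T)\times\cl{\rm BMO}(\D[0,T])$ is then recovered \emph{a posteriori} from the invariance of the ball $\cB_\e$, which is the part of your argument that is correct. To repair your proof you would need to replace your contraction norm by the $\cM^2$ norm (or otherwise exploit the Fubini/M-solution mechanism); as written, the key inequality does not hold.
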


\begin{proof}
For any $(y(\cd),z(\cd,\cd))\in \cM^2[0,T]$, consider the following BSVIE:
\bel{bsvie-m-uv}
Y(t)=\psi(t)+\int_t^Tg(t,s,Y(s),Z(t,s),z(s,t))ds-\int_t^TZ(t,s)dW(s),\q t\in[0,T].
\ee
In light of {\rm\ref{A4}}, by \autoref{thm-bsvie-1-d-exist-unique},
BSVIE \rf{bsvie-m-uv} admits a unique adapted solution $(Y(\cd),Z(\cd,\cd))\in  L_\dbF^\i(0,T)\times\cl{\rm BMO}(\D[0,T])$.
Determine $Z(s,t);(t,s)\in\D[0,T]$ by martingale representation theorem, i.e.,
$$Y(s)=\dbE[Y(s)]+\int_0^sZ(s,t)dW(t),\q s\in[0,T].$$
This means that BSVIE \rf{bsvie-m-uv} admits a unique adapted M-solution $(Y(\cd),Z(\cd,\cd))\in\cM^2[0,T]$.
Thus the map
\bel{m-gamma}
\wt\G(y(\cd),z(\cd,\cd))\deq(Y(\cd),Z(\cd,\cd)),\qq (y(\cd),z(\cd,\cd))\in \cM^2(0,T)
\ee
is well-defined.
In order to prove BSVIE \rf{bsvie-m} admits a unique adapted M-solution, we need to prove that $\wt\G(\cd\,,\cd)$ has a fixed point in $\cM^2[0,T]$. The proof is divided into two steps.
\ms

{\bf Step 1.}  There is an $\e>0$ such that $\wt\G(\cd,\cd)$ is a contraction on $\cM^2[T-\e,T]$ and hence BSVIE \rf{bsvie-m} admits a unique adapted M-solution on $[T-\e,T]$.

\ms
For any $(y(\cd),z(\cd,\cd)),(\wt y(\cd),\wt z(\cd,\cd))\in\cM^2[T-\e,T]$, with $\e>0$ undetermined, set
\bel{m-gamma-yz}
(Y(\cd),Z(\cd,\cd))=\wt\G(y(\cd),z(\cd,\cd)),\q(\wt Y(\cd),\wt Z(\cd\,,\cd))=\wt\G(\wt y(\cd),\wt z(\cd,\cd));
\ee
that is, for $t\in[T-\e,T]$,
\begin{align}
\label{bsvie-m-gamma-yz1} Y(t)&=\psi(t)+\int_t^Tg(t,s,Y(s),Z(t,s),z(s,t))ds-\int_t^TZ(t,s)dW(s),\\
\label{bsvie-m-gamma-yz2} \wt Y(t)&=\psi(t)+\int_t^Tg(t,s,\wt Y(s),\wt Z(t,s),\wt z(s,t) )ds-\int_t^T\wt Z(t,s)dW(s),
\end{align}
and
\begin{align}
\label{bsvie-m-gamma-m1}Y(s)&=\dbE[Y(s)|\cF_{T-\e}]+\int_{T-\e}^s Z(s,t)dW(t),~s\in[T-\e,T],\\
\label{bsvie-m-gamma-m2} \wt Y(s)&=\dbE[\wt Y(s)|\cF_{T-\e}]+\int_{T-\e}^s \wt Z(s,t)dW(t),~s\in[T-\e,T].
\end{align}
Similar to \autoref{le-l-d-Gamma-e-b}, noting that $z'\mapsto g(t,s,y,z,z')$ is bounded, there is an $\e>0$ such that $\wt\G(y(\cd),z(\cd\,,\cd))\in\cB_\e$ for any $(y(\cd),z(\cd\,,\cd))\in\cM^2(T-\e,T)$, where $\cB_\e$ is defined by \rf{1-d-b-e}.
Thus, we have
\bel{(Y,Z),(Y,Z)}
(Y(\cd),Z(\cd\,,\cd)),~(\wt Y(\cd),\wt Z(\cd\,,\cd))\in\cB_\e.
\ee
By {\rm\ref{A4}}, for any $t\in[T-\e,T]$, there is a process $\th(t,\cd)$ such that:
\begin{align}
\label{th-m-Gamma-beta1} & \th(t,s)=0, \q t\in[T-\e,T],~s
\in[0,t],\\
\label{th-m-Gamma-beta2} & |\th(t,s)|\les L(1+|Z(t,s)|+|\wt Z (t,s)|),\q(t,s)\in\D[T-\e,T],\\
\nn & g(t,s,\wt Y(s),Z(t,s),\wt z(s,t))-g(t,s,\wt Y(s),\wt Z(t,s),\wt z(s,t))\\
\label{th-m-Gamma-beta3}                    &~=[Z(t,s)-\wt Z(t,s)]\th(t,s),~\q(t,s)\in\D[T-\e,T].
\end{align}
Similar to \rf{pro-1-d-Gamma-e-c-6}, we have
\bel{th-m-beta-bmo}
\|\th(\cd,\cd)\|^2_{\cl{\rm BMO}(\D[T-\e,T])}\les3L^2T+6L^2A.
\ee
For almost all $t\in[T-\e,T]$, by \autoref{lemma-Girsanov}, $W(t;\cdot)$ defined by
\bel{th-m-w}
 W(t;s)\deq W(s)-\int_0^s\th(t,r)dr,\q s\in[0,T]
\ee
 is a Brownian motion on $[0,T]$ under the equivalent probability measure $\cl{\dbP}_t$,
 which is defined by
\bel{th-m-p}
d\cl{\dbP}_t\deq\cE\{\th(t,\cd)\}_{\1n_T}d\dbP.
\ee
The corresponding expectation is denoted by $\dbE^{\bar\dbP_t}$.
Combining \rf{bsvie-m-gamma-yz1}--\rf{bsvie-m-gamma-yz2} and \rf{th-m-Gamma-beta3}--\rf{th-m-w}, we have
\begin{align}
\nn & Y(t)-\wt Y(t)+\int_t^T [Z(t,s)-\wt Z(t,s)]dW(t,s)\\
\label{th-m-yz-tiyz}&~ =\int_t^T\left[g(t,s,Y(s),Z(t,s),z(s,t))-g(t,s,\wt Y(s),Z(t,s),\wt z(s,t))\right]ds.
\end{align}
Taking square and then taking the conditional expectation $\dbE_t^{\bar\dbP_t}[\,\cd\,]=\dbE^{\bar\dbP_t}[\,\cd\,|\,\cF_t]$, we have
\begin{align}
\nn & |Y(t)-\wt Y(t)|^2+\dbE_t^{\bar\dbP_t}\[\int_t^T|Z(t,s)-\wt Z(t,s)|^2ds\]\\
\nn &~=\dbE_t^{\bar\dbP_t}\[\int_t^T\(g(t,s,Y(s),Z(t,s),z(s,t))
-g(t,s,\wt Y(s),Z(t,s),\wt z(s,t))\)ds\]^2\\
\label{th-m-yz-tiyz-cond}&~\les L^2\dbE_t^{\bar\dbP_t}\[\int_t^T\(|Y(s)-\wt Y(s)|+|z(s,t)-\wt z(s,t)|\)ds\]^2.
\end{align}
By $(Y(\cd),Z(\cd,\cd)),(\wt Y(\cd),\wt Z(\cd,\cd))\in\cB_\e$ and \autoref{lemma-BMO}, there is a constant $C>0$ (which is depending on $\|\psi(\cd)\|_\i$ and is independent of $t$) such that
\begin{align}
\nn &|Y(t)-\wt Y(t)|^2+\dbE_t\[\int_t^T|Z(t,s)-\wt Z(t,s)|^2ds\]\\
\nn&~\les C\dbE_t\[\int_t^T\(|Y(s)-\wt Y(s)|+|z(s,t)-\wt z(s,t)|\)ds\]^2\\
\label{th-m-yz-tiyz-contra}&~\les C(T-t)\dbE_t\[\int_t^T\(|Y(s)-\wt Y(s)|^2+|z(s,t)-\wt z(s,t)|^2\)ds\].
\end{align}
Thus, integrating the above on $[T-\e,T]$, we obtain
\begin{align}
\nn &\dbE\int_{T-\e}^T|Y(t)-\wt Y(t)|^2dt+\dbE\int_{T-\e}^T\int_t^T|Z(t,s)-\wt Z(t,s)|^2dsdt\\
\label{th-m-yz-tiyz-in-contra}&~\les C\e\dbE\int_{T-\e}^T\int_t^T\left[|Y(s)-\wt Y(s)|^2+|z(s,t)-\wt z(s,t)|^2\right]dsdt,
\end{align}
with a possible different constant $C>0$. By the variation of constants formula, we obtain
\begin{align}
\nn &\dbE\int_{T-\e}^T|Y(t)-\wt Y(t)|^2dt+\dbE\int_{T-\e}^T\int_t^T|Z(t,s)-\wt Z(t,s)|^2dsdt\\
\label{th-m-yz-tiyz-in-contra1}&~\les C\e\dbE\int_{T-\e}^T\int_t^T |z(s,t)-\wt z(s,t)|^2dsdt\les C\e\dbE\int_{T-\e}^T|y(t)-\wt y(t)|^2dt.
\end{align}
The constant appears above is generic (only depends on the constants $L$, $\g$, $T$, and $\|\psi(\cd)\|_\i$, and is independent of $\e>0$). Therefore, when $\e$ is small enough, $\wt\G(\cd,\cd)$ is a contraction on $\cM^2(T-\e,T)$. Consequently, BSVIE \rf{bsvie-m} admits a unique adapted solution on $[T-\e,T]$.
Further, by \rf{(Y,Z),(Y,Z)}, the unique adapted M-solution $(Y(\cd),Z(\cd,\cd))$ also belongs to $L^\i_{\dbF}(T-\e,T)\times \cl{\rm BMO}(\D[T-\e,T])$.

\ms

\vskip-1cm

\setlength{\unitlength}{.01in}
~~~~~~~~~~~~~~~~~~~~~~~~~~~~~~~~~~~~~~~~~~~~~~~\begin{picture}(230,240)
\put(0,0){\vector(1,0){170}}
\put(0,0){\vector(0,1){170}}
\put(110,0){\line(0,1){150}}
\put(150,0){\line(0,1){150}}
\put(0,110){\line(1,0){150}}
\put(0,150){\line(1,0){150}}
\thicklines
\put(0,0){\color{red}\line(1,1){150}}
\put(122,137){\makebox(0,0){$\textcircled{\small 1}$}}
\put(55,130){\makebox(0,0){$\textcircled{\small 2}$}}
\put(135,120){\makebox(0,0){$\textcircled{\small 3}$}}
\put(130,55){\makebox(0,0){$\textcircled{\small 4}$}}
\put(-10,150){\makebox(0,0)[b]{$\scriptstyle T$}}
\put(150,-12){\makebox(0,0)[b]{$\scriptstyle T$}}
\put(-15,105){\makebox(0,0)[b]{$\scriptstyle T-\e$}}
\put(105,-12){\makebox(0,0)[b]{$\scriptstyle T-\e$}}
\put(180,-5){\makebox{$t$}}
\put(0,180){\makebox{$s$}}
\put(35,80){\makebox(0,0){$\scriptstyle\D[0,T-\e]$}}
\put(75,35){\makebox(0,0){$\scriptstyle\D^*[0,T-\e]$}}
\end{picture}

\bs

\centerline{(Figure 2)}

\bs

The above determined $Y(t)$ for $t\in[T-\e,T]$ and determined
$Z(t,s)$ for $(t,s)\in\D[T-\e,T]$ (the region marked $\textcircled{\small 1}$ in the above figure) by using Type-I BSVIEs, and for $(t,s)\in\D^*[T-\e,T]$ (the region marked $\textcircled{\small 3}$ in the above figure) by using martingale representation.

\ms

{\bf Step 2.} BSVIE \rf{bsvie-m} admits a unique adapted M-solution on $[0,T]$.

\ms
By Step 1, BSVIE \rf{bsvie-m} admits a unique solution on $[T-\e,T]$. For almost every $s\in[T-\e,T]$, $\dbE_{T-\e}[Y(s)]\in L^2_{\cF_{T-\e}}(\Om)$, by martingale representation theorem, there is a unique $Z(\cd,\cd)\in L^2(T-\e,T;L^2_\dbF(0,T-\e))$ such that:
\bel{thm-m-z'}
\dbE_{T-\e}[Y(s)]=\dbE[Y(s)|+\int_0^{T-\e}Z(s,t)dW(t),\q s\in[T-\e,T].
\ee
Hence, we have uniquely determined $(Y(t),Z(t,s))$ for $(t,s)\in[T-\e,T]\times[0,T]$ (the region marked $\textcircled{\small 1}$, $\textcircled{\small 3}$ and $\textcircled{\small 4}$) and the following is well-defined:
\bel{thm-m-gs}
g^{T-\e}(t,s,z)=g(t,s,Y(s),z,Z(s,t)),\q (t,s)\in[0,T-\e]\times[T-\e,T].
\ee
Note that $[0,T-\e]\times[T-\e,T]$ is the region marked $\textcircled{\small 2}$ in the above Figure 2. Now, consider the following SFIE:
\bel{thm-m-sfie}
\psi^{T-\e}(t)=\psi(t)+\int_{T-\e}^Tg^{T-\e}(t,s,Z(t,s))ds -\int_{T-\e}^TZ(t,s)dW(s), \q t\in[0,T-\e].
\ee
Similar to the Step 2 of the proof of \autoref{thm-bsvie-1-d-exist-unique}, SFIE \rf{thm-m-sfie} admits a unique solution $(\psi^{T-\e}(\cd),Z(\cd,\cd))$
on $[0,T-\e]\times[T-\e,T]$ and the following estimate holds:
\bel{them-m-psi-l}
|\psi^{T-\e}(t)|^2\les\a(0),\q t\in[0,T-\e],
\ee
where $\a(\cd)$ solves an equation similar to \rf{thm-bsvie-1-d-exist-unique-2}. The above uniquely determined
\bel{thm-m-y-z-psi}\left\{\begin{aligned}
   Y(t), &\q t\in[T-\e,T],\\
   Z(t,s),&\q (t,s)\in \([T-\e,T]\times[0,T]\)\bigcup\([0,T-\e]\times[T-\e,T]\).
\end{aligned}\right.\ee
Now, we consider
\bel{thm-m-bsvie-[T-2e,T-e]}
Y(t)=\psi^{T-\e}(t)+\int_t^{T-\e}g(t,s,Y(s),Z(t,s),Z(s,t))ds-\int_t^{T-\e}Z(t,s)dW(s)
\ee
on $[0,T-\e]$. Since $\psi^{T-\e}(\cd)\in L^\i_{\cF_{T-\e}}(0,T-\e)$, \rf{thm-m-bsvie-[T-2e,T-e]} is a BSVIE on $[0,T-\e]$.
Then the above procedure can be repeated. Since the step-length $\e>0$ can be fixed, we then could use induction to complete the proof.
\end{proof}

\section{A Comparison Theorem for Type-I BSVIEs}\label{Comparison-thm}

Consider the following BSVIEs: For $i=1,2$,\rm
\bel{bsvie-1-d-comparison}
Y^i(t)=\psi^i(t)+\int_t^T g^i(t,s,Y^i(s),Z^i(t,s))ds-\int_t^TZ^i(t,s)dW(s),\q t\in[0,T].
\ee
We assume that the generators $g^i(\cd)$, $i=1,2$ of BSVIEs \rf{bsvie-1-d-comparison} satisfy \ref{A2}. Then by \autoref{thm-bsvie-1-d-exist-unique}, BSVIE \rf{bsvie-1-d-comparison} admits a unique adapted solution
$(Y^i(\cd),Z^i(\cd,\cd))\in L^\i_{\dbF}(0,T)\times\cl{\rm BMO}(\D[0,T])$ for any $\psi^i(\cd)\in L^\i_{\cF_T}(0,T)$.
In order to study the comparison theorem of the solutions to BSVIE \rf{bsvie-1-d-comparison}, we introduce the following BSVIE:
\bel{bsvie-1-d-comparison-bar}
\bar Y(t)=\bar\psi(t)+\int_t^T \bar g(t,s,\bar Y(s),\bar Z(t,s))ds-\int_t^T\bar Z(t,s)dW(s),\q t\in[0,T],
\ee
with the generator $\bar g(\cd)$ also satisfies \ref{A2}.
Further, we adopt the following assumption.

\begin{taggedassumption}{(C)}\label{C}\rm
Let the generator $\bar g:\D[0,T]\times\dbR\times\dbR\times\Om\to\dbR$ satisfy that $y\mapsto \bar{g}(t,s,y,z)$ is nondecreasing for any $(t,s,z)\in\D[0,T]\times\dbR$.
\end{taggedassumption}

We present the comparison theorem for BSVIE \rf{bsvie-1-d-comparison} now.

\begin{theorem}\label{thm-comparison} \sl
Let $g^1(\cd),g^2(\cd)$ and $\bar g(\cd)$ satisfy {\rm\ref{A2}} and let $\bar g(\cd)$ satisfy {\rm\ref{C}}. Suppose
\bel{g<g}g^1(t,s,y,z)\les\bar g(t,s,y,z)\les g^2(t,s,y,z),\q\forall (y,z)\in\dbR\times\dbR,~\as,~\ae~(t,s)\in\D[0,T].\ee
Then for any $\psi^1(\cd),\psi^2(\cd)\in L^\i_{\cF_T}(0,T)$ satisfying
\bel{thm-comparison-c2}
\psi^1(t)\les\psi^2(t),~\as,~\ae~t\in[0,T],
\ee
the corresponding unique adapted solutions $(Y^i(\cd),Z^i(\cd,\cd))$, $i=1,2$ of BSVIEs \rf{bsvie-1-d-comparison} satisfy
\bel{thm-comparison-c3}
Y^1(t)\les Y^2(t),\q\as,~\ae~t\in[0,T].
\ee
If, in addition, the generators $g^1(\cd)$, $g^2(\cd)$ and $\bar g(\cd)$ satisfy {\rm\ref{A3}}, and
\begin{align}
g^1(t,s,y,z)\les\bar g(t,s,y,z)\les g^2(t,s,y,z),\qq\forall (t,y,z)\in[0,T]\times\dbR\times\dbR,~\as,~\ae~s\in[0,T].
\end{align}
Then for any $\psi^1(\cd),\psi^2(\cd)\in L^\i_{\cF_T}(\Om;C^U[0,T])$ satisfying
\bel{thm-comparison-c-c2}
\psi^1(t)\les\psi^2(t),\q t\in[0,T],~\as,
\ee
the corresponding unique adapted solutions $(Y^i(\cd),Z^i(\cd,\cd))$, $i=1,2$ of BSVIEs \rf{bsvie-1-d-comparison} satisfy
\bel{thm-comparison-c-c3}
Y^1(t)\les Y^2(t),\q t\in[0,T],\q\as
\ee
\end{theorem}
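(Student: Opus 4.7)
The plan is to split the three-term hypothesis $g^1\les\bar g\les g^2$, combined with $\psi^1\les\psi^2$, into a chain of pairwise comparisons with the BSVIE of generator $\bar g$ serving as a bridge in two different incarnations. Let $\wt Y_{(i)}(\cd)$ denote the unique adapted solution of \eqref{bsvie-1-d-comparison-bar} with free term $\bar\psi:=\psi^i$, $i=1,2$. It suffices to prove the three links
\begin{equation*}
Y^1(t)\;\les\;\wt Y_{(1)}(t)\;\les\;\wt Y_{(2)}(t)\;\les\;Y^2(t),\qquad \as,~\ae~t\in[0,T],
\end{equation*}
after which \eqref{thm-comparison-c3} follows by transitivity. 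Each of the three links has the same ``monotone-bridge + Picard iteration'' structure, so I describe only the first, $Y^1\les\wt Y_{(1)}$, in which the two BSVIEs share the free term $\psi^1$ and have generators $g^1\les\bar g$.

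First, I work on a small interval $[T-\e,T]$ on which Proposition \ref{pro-l-d-Gamma-e-c} guarantees that the corresponding solution maps (one for $g^1$, one for $\bar g$) are contractions on the ball $\cB_\e$ of \eqref{1-d-b-e}. I set up two Picard iterations
\begin{align*}
Y^{1,k+1}(t) &= \psi^1(t)+\int_t^T g^1(t,s,Y^{1,k}(s),Z^{1,k+1}(t,s))ds-\int_t^T Z^{1,k+1}(t,s)dW(s),\\
\wt Y^{\,k+1}_{(1)}(t) &= \psi^1(t)+\int_t^T \bar g(t,s,\wt Y^{\,k}_{(1)}(s),\wt Z^{\,k+1}_{(1)}(t,s))ds-\int_t^T \wt Z^{\,k+1}_{(1)}(t,s)dW(s),
\end{align*}
both started from the zero pair. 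For each fixed $k$ these are BSVIEs whose generators do not depend on the current unknown $Y$, so they fall under Theorem \ref{thm-comparison-no-y}. The inductive engine is: if $Y^{1,k}(s)\les\wt Y^{\,k}_{(1)}(s)$, a.s., a.e.\ $s$, then combining $g^1\les\bar g$ with the monotonicity in $y$ of $\bar g$ from \ref{C} gives
\begin{equation*}
g^1(t,s,Y^{1,k}(s),z)\;\les\;\bar g(t,s,Y^{1,k}(s),z)\;\les\;\bar g(t,s,\wt Y^{\,k}_{(1)}(s),z),\qquad\forall z\in\dbR,
\end{equation*}
and Theorem \ref{thm-comparison-no-y}, applied with these two frozen generators and the common free term $\psi^1$, yields $Y^{1,k+1}\les\wt Y^{\,k+1}_{(1)}$. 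Passing to the contraction limit in $\cB_\e$ transports the inequality to $Y^1\les\wt Y_{(1)}$ on $[T-\e,T]$. An induction on successive subintervals $[T-k\e,T-(k-1)\e]$, identical in structure to Step 3 of the proof of Theorem \ref{thm-bsvie-1-d-exist-unique}, then propagates the comparison backward to $[0,T]$. The remaining links $\wt Y_{(1)}\les\wt Y_{(2)}$ and $\wt Y_{(2)}\les Y^2$ use exactly the same scheme, with the hypothesis pair $(\psi^1\les\psi^2,\,\bar g\text{ monotone in }y)$ for the middle link and $(\bar g\les g^2,\,\bar g\text{ monotone in }y)$ for the last.

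Under the additional hypothesis \ref{A3} and $\psi^1,\psi^2\in L^\i_{\cF_T}(\Om;C^U[0,T])$, Theorem \ref{thm-bsvie-exist-unique-c} delivers continuous sample paths for $Y^1(\cd)$ and $Y^2(\cd)$, so the ``$\as,~\ae~t$'' inequality upgrades automatically to ``for all $t\in[0,T]$, $\as$'' by path continuity, giving \eqref{thm-comparison-c-c3}. The main obstacle is the dependence of the generator on $Y(s)$ rather than $Y(t)$: one cannot simply freeze $t$ and invoke a BSDE-style change-of-measure comparison as in Theorem \ref{thm-comparison-no-y}; assumption \ref{C} on the intermediate generator is precisely what allows the pointwise inequality to be propagated through each Picard step without ever having to differentiate $g^1$ or $g^2$ in $y$.
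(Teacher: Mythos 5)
Your overall strategy coincides with the paper's: both proofs use the intermediate generator $\bar g$ as a bridge, reduce each pairwise comparison to the frozen-$y$ case handled by \autoref{thm-comparison-no-y}, use assumption \ref{C} to propagate the ordering through a Picard iteration, pass to the limit via the contraction of \autoref{pro-l-d-Gamma-e-c}, and then induct backward over subintervals of length $\e$; the continuity upgrade via \autoref{thm-bsvie-exist-unique-c} and density of a countable set of good times is also the paper's argument. The only structural difference is cosmetic: the paper fixes a single $\bar\psi$ with $\psi^1\les\bar\psi\les\psi^2$ and builds one monotone sequence $\wt Y_0=Y^1\les\wt Y_1\les\cds$ increasing to $\bar Y$, whereas you run two zero-started iterations in parallel and compare them termwise; either variant works.

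There is, however, one genuinely under-specified step: the backward propagation from $[T-\e,T]$ to $[0,T-\e]$. You assert it is ``identical in structure to Step 3 of the proof of \autoref{thm-bsvie-1-d-exist-unique}'', but that step only establishes existence and says nothing about order. After restricting to $[0,T-\e]$, the two BSVIEs in your first link no longer share a free term: the effective free terms are the SFIE solutions
\[
\psi^{1,T-\e}(t)=\psi^1(t)+\int_{T-\e}^Tg^1(t,s,Y^1(s),Z^1(t,s))\,ds-\int_{T-\e}^TZ^1(t,s)\,dW(s)
\]
and its analogue built from $(\bar g,\wt Y_{(1)},\wt Z_{(1)})$, and these are distinct $\cF_{T-\e}$-measurable random variables. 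Before the subinterval induction can continue you must prove $\psi^{1,T-\e}(t)\les\wt\psi^{\,T-\e}_{(1)}(t)$. This requires its own argument --- in the paper (see \rf{thm-comparison-p15}--\rf{thm-comparison-p18}) one performs a Girsanov change of measure to absorb the difference of the stochastic-integral terms, takes conditional expectation given $\cF_{T-\e}$, and then uses the already-established inequality $Y^1\les\wt Y_{(1)}$ on $[T-\e,T]$ together with \ref{C} and $g^1\les\bar g$ to show that the conditional expectation of the drift difference is nonpositive. Once this ordering of the new free terms is in hand, your iteration scheme on the next subinterval goes through verbatim (since \autoref{thm-comparison-no-y} tolerates ordered rather than equal free terms), so the gap is fillable; but it is the most delicate part of the whole proof and cannot be dismissed as a repetition of the existence argument.
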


\begin{proof} Let $\bar\psi(\cd)\in L^\i_{\cF_T}(0,T)$ such that
\bel{thm-comparison-p1}
\psi^1(t)\les\bar\psi(t)\les\psi^2(t),\q\as,~\ae~t\in[0,T].
\ee
Without loss of generality, let
\bel{thm-comparison-psiL}
\|\psi(\cd)\|_\i\les L,
\ee
where $\psi(\cd)=\psi^1(\cd),\psi^2(\cd),\bar\psi(\cd)$.
By \autoref{thm-bsvie-1-d-exist-unique}, BSVIE \rf{bsvie-1-d-comparison} admits a unique adapted solution
$(Y^1(\cd), Z^1(\cd,\cd))\in L^\i_{\dbF}(0,T)\times\cl{\rm BMO}(\D[0,T])$ for $i=1$. Set $\wt Y_0(\cd)= Y^1(\cd)$ and consider
\bel{thm-comparison-p2}
\wt Y_1(t)=\bar\psi(t)+\int_t^T \bar g(t,s,\wt Y_0(s),\wt Z_1(t,s))ds-\int_t^T \wt Z_1(t,s)dW(s),\q t\in[0,T].
\ee
By \autoref{thm-exist-unique-no-y}, there is a unique adapted solution $(\wt Y_1(\cd),\wt Z_1(\cd,\cd))\in L^\i_{\dbF}(0,T)\times\cl{\rm BMO}(\D[0,T])$ to the above BSVIE. By \rf{g<g}, we have
\bel{thm-comparison-p3}
g^1(t,s,\wt Y_0(s),z)\les\bar g(t,s,\wt Y_0(s),z),\q\forall z\in\dbR,~\as,~\ae~(t,s)\in\D[0,T].
\ee
Combining this and \rf{thm-comparison-p1}, by \autoref{thm-comparison-no-y}, for almost all $t\in[0,T]$, there exists a measurable set $\Om_t^1\subseteq\Om$ satisfying $\dbP(\Om_t^1)=0$ such that
\bel{thm-comparison-p4}
\wt Y_0(t)=Y^1(t)\les\wt Y_1(t),~\om\in\Om\backslash\Om_t^1,~\ae~t\in[0,T].
\ee
Next, we consider the following BSVIE
\bel{thm-comparison-p5}
\wt Y_2(t)=\bar\psi(t)+\int_t^T \bar g(t,s,\wt Y_1(s),\wt Z_2(t,s))ds-\int_t^T \wt Z_2(t,s)dW(s),\q t\in[0,T].
\ee
Let $(\wt Y_2(\cd),\wt Z_2(\cd,\cd))$ be the unique solution to the above equation.
Since $y\mapsto\bar g(t,s,y,z)$ is nondecreasing, by \rf{thm-comparison-p4}, we have
\bel{thm-comparison-p6}
\bar g(t,s,\wt Y_0(s),z)\les \bar g(t,s,\wt Y_1(s),z),\q\forall z\in\dbR,~\as,~\ae~(t,s)\in\D[0,T].
\ee
Similar to the above, for almost everywhere $t\in[0,T]$, there exists a measurable set $\Om_t^2\subseteq\Om$ satisfying $\dbP(\Om_t^2)=0$ such that
\bel{thm-comparison-p7}
\wt Y_1(t)\les \wt Y_2(t),~\om\in\Om\backslash\Om_t^2,~\ae~t\in[0,T].
\ee
By induction, we can construct a sequence $(\wt Y_k(\cd),\wt Z_k(\cd,\cd))$ and $\Om^k_t$ satisfying $\dbP(\Om^k_t)=0$ such that
\bel{thm-comparison-p8}
\wt Y_{k+1}(t)=\bar\psi(t)+\int_t^T \bar g(t,s,\wt Y_k(s),\wt Z_{k+1}(t,s))ds-\int_t^T \wt Z_{k+1}(t,s)dW(s),\q t\in[0,T],
\ee
and
\bel{thm-comparison-p9}
 Y^1(t)=\wt Y_0(t)\les \wt Y_1(t)\les \wt Y_2(t)\les\cds,\q\om\in\Om\setminus\(\bigcup_{k\ges 1}\Om^k_t\),~\ae~t\in[0,T].
\ee
Note that $\dbP[\Om\setminus(\bigcup_{k\geq 1}\Om^k_t)]=0$.
We may assume that
\bel{thm-compariosn-p26}
|\psi(t)|\les\a(0),\q t\in[0,T],
\ee
where $\psi(\cd)=\psi^1(\cd),\psi^2(\cd),\bar\psi(\cd)$ and $\a(\cd)$ solves an ODE of form \rf{thm-bsvie-1-d-exist-unique-2}.
By \autoref{pro-l-d-Gamma-e-c}, there is an $\e>0$ such that $\wt Y_k(\cd)$ is  Cauchy in $L^\i_{\dbF}(T-\e,T)$ and
\bel{thm-comparison-p10}
\lim_{k\to\i}\|\wt Y_k(\cd)-\bar Y(\cd)\|_{L^\i_{\dbF}(T-\e,T)}=0.
\ee
Combining \rf{thm-comparison-p9} and \rf{thm-comparison-p10}, we have
\bel{thm-comparison-p11}
Y^1(t)\les\bar Y(t),\q \as,~\ae~t\in[T-\e,T].
\ee
Next, consider the following SFIEs:
\begin{align}
\label{thm-comparison-p12} \psi^{1,T-\e}(t)&=\psi^1(t)+\int_{T-\e}^Tg^1(t,s,Y^1(s),Z^1(t,s))ds-\int_{T-\e}^T Z^1(t,s)dW(s),\q t\in[0,T-\e];\\
\label{thm-comparison-p13}\bar\psi^{T-\e}(t)&=\bar\psi(t)+\int_{T-\e}^T\bar g(t,s,\bar Y(s),\bar Z(t,s))ds-\int_{T-\e}^T\bar Z(t,s)dW(s), \q t\in[0,T-\e].
\end{align}
Similar to the Step 2 in \autoref{thm-bsvie-1-d-exist-unique},
the above SFIEs \rf{thm-comparison-p12} and \rf{thm-comparison-p13} admit unique solutions
$(\psi^{1,T-\e}(\cd),Z^1(\cd,\cd))$, $(\bar\psi^{T-\e}(\cd),\bar Z(\cd,\cd))\in L^\i_{\cF_{T-\e}}(0,T-\e)\times \cl{\rm BMO}([0,T-\e]\times[T-\e,T])$, respectively.
Similar to \rf{thm-bsvie-1-d-exist-unique-18}, we have
\bel{thm-comparison-p14}
|\psi^{1,T-\e}(t)|\les\a(0),\q|\bar\psi^{T-\e}(t)|\les\a(0),\q t\in[0,T-\e].
\ee
For almost all $t\in[0,T-\e]$, similar to \rf{pro-l-d-Gamma-e-c-5-2}--\rf{pro-l-d-Gamma-e-c-5-3} and \rf{pro-l-d-Gamma-e-c-7}--\rf{pro-l-d-Gamma-e-c-8}, there is a process $\th(t,\cd)$ such that:
\bel{thm-comparison-p15}
g^1(t,s,Y^1(s),Z^1(t,s))-g^1(t,s,Y^1(s),\bar Z(t,s))=\big[Z^1(t,s)-\bar Z(t,s)\big]\th(t,s),
\ee
and
\bel{thm-comparison-p16}
 W (t;s)\deq W(s)-\int_0^s\th(t,r)dr,\q s\in[0,T]
\ee
is a Brownian motion on $[0,T]$ under the corresponding equivalent probability measure $\cl\dbP_t$.
The corresponding expectation is denoted by $\dbE^{\bar\dbP_t}$.
Combining \rf{thm-comparison-p12}--\rf{thm-comparison-p13} and \rf{thm-comparison-p15}--\rf{thm-comparison-p16}, we have
\begin{align}
\nn& \psi^{1,T-\e}(t)-\bar\psi^{T-\e}(t)\\
\nn&~=\psi^1(t)-\bar\psi(t) +\int_{T-\e}^T\big[g^1(t,s,Y^1(s),\bar Z(t,s))-\bar g(t,s,\bar Y(s),\bar Z(t,s))\big]ds\\
\label{thm-comparison-p17}&~\hp{=\ } -\int_{T-\e}^T \big[Z^1(t,s)-\bar Z(t,s)\big]dW(t;s), \q t\in[0,T-\e].
\end{align}
Since $y\mapsto\bar g(t,s,y,z)$ is nondecreasing for any $(t,s,z)\in\D[0,T]\times\dbR$, by \rf{thm-comparison-p11}, we have
\bel{thm-comparison-p25}
\bar g(t,s, Y^1(s),z)\les\bar g(t,s,\bar Y(s),z),\q (t,s,z)\in[0,T]\times[T-\e,T]\times\dbR.
\ee
Taking conditional expectation $\dbE^{\bar\dbP_t}_t[\,\cd\,]\equiv\dbE^{\bar\dbP_t}[\,\cd\,|\,\cd\,]$, on the both sides of \rf{thm-comparison-p17}, by \rf{g<g}, \rf{thm-comparison-p25} and \rf{thm-comparison-p11}, we have
\begin{align}
\nn& \psi^{1,T-\e}(t)-\bar\psi^{T-\e}(t)\\
\nn&~=\dbE_t^{\bar\dbP_t}
\[\psi^1(t)-\bar\psi(t)+\int_{T-\e}^T\big[g^1(t,s,Y^1(s),\bar Z(t,s))-\bar g(t,s,\bar Y(s),\bar Z(t,s))\big]ds\]\\
\label{thm-comparison-p18}&~\les\dbE_t^{\bar\dbP_t}\[\psi^1(t)-\bar
\psi(t)+\int_{T-\e}^T\big[g^1(t,s,Y^1(s),
\bar Z(t,s))-\bar g(t,s,Y^1(s),\bar Z(t,s))\big]ds\]\\
\nn&~\les0,\q t\in[0,T-\e].
\end{align}
Now, we consider the following BSVIEs:
\begin{align}
\label{thm-comparison-p19}                           y^1(t)&=\psi^{1,T-\e}(t)+\int_t^{T-\e}g^1(t,s,y^1(s),z^1(t,s))ds-\int_t^{T-\e}z^1(t,s)dW(s),\q t\in [0,T-\e];\\
\label{thm-comparison-p20}\bar y(t)&=\bar \psi^{T-\e}(t)+\int_t^{T-\e}\bar g(t,s,\bar y(s),\bar z(t,s))ds-\int_t^{T-\e}\bar z(t,s)dW(s),\q t\in[0,T-\e].
\end{align}
By \autoref{thm-bsvie-1-d-exist-unique}, the above equations \rf{thm-comparison-p19}, \rf{thm-comparison-p20} admit unique solutions
$(y^1(\cd),z^1(\cd,\cd))$, $(\bar y(\cd),\bar z(\cd,\cd))\in L^\i_\dbF(0,T-\e)\times\cl{\rm BMO}(\D[0,T-\e])$, respectively.
By the Step 3 in the proof of \autoref{thm-bsvie-1-d-exist-unique}, we have
\begin{align}
y^1(t)=Y^1(t),~z^1(t,s)=Z^1(t,s),\q (t,s)\in\D[0,T-\e]; \\\
\bar y(t)=\bar Y(t),~\bar z(t,s)=\bar Z(t,s),\q (t,s)\in\D[0,T-\e].
\end{align}
Hence, by induction, we have
\bel{thm-comparison-p22}
Y^1(t)\les\bar Y(t),\q\as,~\ae~t\in[0,T].
\ee
Similarly, we can prove that
\bel{thm-comparison-p23}
\bar Y(t)\les Y^2(t),\q\as,~\ae~t\in[0,T].
\ee
Thus, the inequality \rf{thm-comparison-c3} holds.

\ms

Next, by what we have proved,
\bel{thm-comparison-c-p1}
Y^1(t)\les Y^2(t),\q\as,\q t\in[0,T].
\ee
Let $\{t_k\}_{k\ges 1}\subseteq[0,T]$ be all the rational numbers in $[0,T]$. For any fixed $t_k$, by \rf{thm-comparison-c-p1}, there is a $\Om_k\subseteq\Om$  satisfying $\dbP(\Om_k)=0$ such that:
\bel{thm-comparison-c-p2}
Y_1(t_k)\les Y_2(t_k),\qq\om\in\Om\backslash\Om_{t_k}.
\ee
Let $\wt\Om=\bigcup_{k\ges 1}\Om_{t_k}$, then $\dbP(\wt\Om)=0$.
By \rf{thm-comparison-c-p2}, we have
\bel{thm-comparison-c-p3}
Y_1(t)\les Y_2(t),\q t\in\{t_k\}_{k\ges1},~\om\in\Om\backslash\wt\Om.
\ee
By \autoref{thm-bsvie-exist-unique-c}, there is a $\bar\Om\subseteq\Om$ satisfying $\dbP(\bar\Om)=0$ such $Y_i(\cd\,,\om)$, $i=1,2$ are continuous for any $\om\in\Om\backslash\bar\Om$.
For any fixed $\om\in\Om\backslash(\wt\Om\cup\bar\Om)$, by \rf{thm-comparison-c-p3}, we have
\bel{thm-comparison-c-p4}
Y_1(t,\om)\les Y_2(t,\om),\q t\in\{t_k\}_{k\ges 1}.
\ee
Since $Y_i(\cd,\om)$, $i=1,2$ are continuous on $[0,T]$ and $\{t_k\}_{k\ges 1}\subseteq[0,T]$ is dense on $[0,T]$, we have
\bel{thm-comparison-c-p5}
Y_1(t,\om)\les Y_2(t,\om),\q t\in[0,T].
\ee
Note that $\dbP(\Om\backslash(\wt\Om\cup\bar\Om))=0$, we have
\bel{thm-comparison-c-p6}
Y_1(t)\les  Y_2(t),\q t\in[0,T],~\as
\ee
This completes the proof. \end{proof}

\section{Continuous-Time Equilibrium Dynamic Risk Measures}\label{application}

We have seen the so-called equilibrium recursive utility process in the introduction section, which serves as a very important motivation of studying BSVIEs. In this section, we will look another closely related  application of BSVIEs.

\ms

Static risk measures have been studied by many researchers. Among many of them, we mention Artzner--Delbaen--Eber--Heath \cite{Artzner-Delbaen-Eber-Heath 1999}, F\"ollmer--Schied \cite{Follmer-Schied 2002}, and the references cited therein. For discrete-time dynamic risk measures, we mention Riedel \cite{Riedel 2004} and Detlefsen--Scandolo \cite{Detlefsen-Scandolo 2005}, and the references cited therein.

\ms

We now look at continuous-time dynamic risk measures. Any $\xi\in L^\i_{\cF_T}(\Om)$ represents the payoff of certain European type contingent claim at the maturity time $T$. According to El Karoui--Peng--Quenez \cite{El Karoui-Peng-Quenez 1997}, we introduce the following definition.

\begin{definition}\label{dynamic risk} \rm A map $\rho:[0,T]\times L^\i_{\cF_T}(\Om)\to\dbR$ is called a {\it dynamic risk measure} if the following are satisfied:
\begin{itemize}
\item[(i)] (Adaptiveness) For any $\xi\in L^\i_{\cF_T}(\Om)$, $t\mapsto\rho(t;\xi)$ is $\dbF$-adapted;

\item[(ii)] (Monotonicity) For any $\xi,\bar\xi\in L^\i_{\cF_T}(\Om)$ with $\xi\ges\bar\xi$, one has $\rho(t;\xi)\les\rho(t;\bar\xi)$,
      for all $t\in[0,T]$;

\item[(iii)] (Translation Invariant) For any $\xi\in L^\i_{\cF_T}(\Om)$ and $c\in\dbR$, $\rho(t;\xi+c)=\rho(t;\xi)-c$.
\end{itemize}
Further, $\rho$ is said to be {\it convex} if the following holds:
\begin{itemize}
\item[(iv)] (Convexity): $\xi\mapsto\rho(t;\xi)$ is convex;
\end{itemize}
and $\rho$ is said to be {\it coherent} if the following are satisfied:
\begin{itemize}
\item[(v)] (Positive Homogeneity): For any $\xi\in L^\i_{\cF_T}(\Om)$ and $\l\ges0$, $\rho(t;\l\xi)=\l\rho(t;\xi)$;

\item[(vi)] (Subadditivity): For any $\xi,\bar\xi\in L^\i_{\cF_T}(\Om)$, $\rho(t;\xi+\bar\xi)\les\rho(t;\xi)+\rho(t;\bar\xi)$.
\end{itemize}
\end{definition}

Each item in the above definition can be naturally explained. For example, (ii) means that between two gains, the one dominantly larger one has a smaller risk; (vi) means that combining two investments will have smaller risk.
The following is a combination of the results from \cite{El Karoui-Peng-Quenez 1997} and \cite{Kobylanski 2000} (see also \cite{Briand-Hu 2006}, \cite{Briand-Hu 2008}, \cite{Briand-Richou 2017}).

\begin{proposition} \sl Let $g:[0,T]\times\dbR\to\dbR$ be measurable such that $z\mapsto g(t,z)$ is convex and grow at most quadratically. Then for any $\xi\in L^\i_{\cF_T}(\Om)$, the following BSDE:
\bel{BSDE*}Y(t)=-\xi+\int_t^Tg(s,Z(s))ds-\int_t^TZ(s)dW(s),\q t\in[0,T],\ee
admits a unique adapted solution $(Y(\cd),Z(\cd))\equiv (Y(\cd\,;\xi),Z(\cd\,;\xi))$. Let $\rho:[0,T]\times L^\i_{\cF_T}
(\Om)\to\dbR$ be defined by the following:
$$\rho(t,\xi)=Y(t;\xi),\q (t,\xi)\in[0,T]\times L^\i_{\cF_T}(\Om).$$
Then $\rho$ is a dynamic convex risk measure.

\end{proposition}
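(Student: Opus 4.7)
The existence and uniqueness part I would deduce directly from \autoref{lemma-briand-hu} applied to the generator $\bar g(s,y,z)\equiv g(s,z)$, which trivially satisfies \ref{A0}: convexity together with quadratic growth of $z\mapsto g(s,z)$ yields the local Lipschitz estimate $|g(s,z_1)-g(s,z_2)|\les L(1+|z_1|+|z_2|)|z_1-z_2|$ automatically (finite convex functions on $\dbR$ are locally Lipschitz with a linear control on the slope coming from the quadratic majorant), and the $y$-Lipschitz constant is $0$. Replacing $\xi$ by $-\xi$ (still in $L^\infty_{\cF_T}(\Om)$) gives a unique adapted solution $(Y(\cd;\xi),Z(\cd;\xi))\in L_\dbF^\infty(\Om;C[0,T])\times\cl{\rm BMO}(0,T)$. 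Property (i) (adaptiveness) is then built into the definition.

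For (iii) translation invariance, I would use a change of variables: if $(Y,Z)$ solves \rf{BSDE*} with terminal $-(\xi+c)$, then $(\wt Y,\wt Z)\deq(Y+c,Z)$ has $\wt Y(T)=-\xi$ and satisfies the same BSDE driven by $g(s,\wt Z(s))$ (since the generator does not depend on $y$ and adding a constant $c$ does not affect the stochastic integral); by uniqueness $\wt Y(t)=Y(t;\xi)$, so $Y(t;\xi+c)=Y(t;\xi)-c$, i.e.\ $\rho(t;\xi+c)=\rho(t;\xi)-c$. Property (ii) monotonicity follows from the comparison theorem for Lipschitz-quadratic BSDEs (Kobylanski, or \cite[Ch.\,7]{Zhang 2017}): if $\xi\ges\bar\xi$ then the terminal conditions satisfy $-\xi\les-\bar\xi$, while the generator is identical for both equations, so $Y(t;\xi)\les Y(t;\bar\xi)$, which is exactly $\rho(t;\xi)\les\rho(t;\bar\xi)$.

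The key step is (iv) convexity, which is where convexity of $g$ gets used. Given $\xi_1,\xi_2\in L^\infty_{\cF_T}(\Om)$, $\l\in[0,1]$, and the associated solutions $(Y^i,Z^i)$ to \rf{BSDE*} with terminal $-\xi_i$, I would set
\[
\wt Y(t)\deq\l Y^1(t)+(1-\l)Y^2(t),\qq\wt Z(t)\deq\l Z^1(t)+(1-\l)Z^2(t),
\]
so that $\wt Y(T)=-[\l\xi_1+(1-\l)\xi_2]$ and
\[
\wt Y(t)=-[\l\xi_1+(1-\l)\xi_2]+\int_t^T\bigl[\l g(s,Z^1(s))+(1-\l)g(s,Z^2(s))\bigr]ds-\int_t^T\wt Z(s)dW(s).
\]
Convexity of $z\mapsto g(s,z)$ gives $\l g(s,Z^1(s))+(1-\l)g(s,Z^2(s))\ges g(s,\wt Z(s))$, so $(\wt Y,\wt Z)$ is a super-solution of the BSDE with terminal $-(\l\xi_1+(1-\l)\xi_2)$. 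Comparing with the true adapted solution $(Y(\cd;\l\xi_1+(1-\l)\xi_2),Z(\cd;\l\xi_1+(1-\l)\xi_2))$ via the quadratic-BSDE comparison theorem yields $\wt Y(t)\ges Y(t;\l\xi_1+(1-\l)\xi_2)$, i.e.\ $\rho(t;\l\xi_1+(1-\l)\xi_2)\les\l\rho(t;\xi_1)+(1-\l)\rho(t;\xi_2)$.

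The only delicate point is the last comparison against a super-solution of a \emph{quadratic} BSDE. A standard approach is to linearize the difference $\wt Y-Y(\cd;\l\xi_1+(1-\l)\xi_2)$ by introducing a process $\th(\cd)$ with $|\th(s)|\les L(1+|\wt Z(s)|+|Z(s;\l\xi_1+(1-\l)\xi_2)|)$ (available because $g$ is locally Lipschitz with the affine modulus coming from convexity), noting that both $\wt Z$ and $Z(\cd;\l\xi_1+(1-\l)\xi_2)$ belong to $\cl{\rm BMO}(0,T)$, so $\int_0^\cd\th(s)dW(s)$ is a BMO martingale and Girsanov (as in \autoref{lemma-Girsanov}) gives an equivalent measure $\cl\dbP$ under which the inequality reduces to a conditional expectation computation. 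This is the same argument used repeatedly in \autoref{thm-comparison-no-y} and \autoref{thm-comparison}, so the whole proof reduces to bookkeeping once the four properties are identified.
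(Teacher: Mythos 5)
Your proof is correct and follows exactly the route the paper intends: the paper itself offers no written proof of this proposition, stating only that it is ``a combination of the results from'' El~Karoui--Peng--Quenez and Kobylanski (with Briand--Hu), and your argument --- well-posedness via \autoref{lemma-briand-hu} after observing that a convex, quadratically growing $z\mapsto g(s,z)$ automatically satisfies the locally Lipschitz bound in \ref{A0}, translation invariance by the shift $Y\mapsto Y+c$, monotonicity by the quadratic comparison theorem, and convexity by showing the convex combination $(\l Y^1+(1-\l)Y^2,\,\l Z^1+(1-\l)Z^2)$ is a supersolution and comparing via the linearization-plus-Girsanov device used in \autoref{thm-comparison-no-y} --- is precisely the standard argument from those references. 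No gaps.
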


One of the most interesting examples is the following.
$$Y(t)=-\xi+\int_t^T{1\over2\g}|Z(s)|^2ds-\int_t^TZ(s)dW(s),\q t\in[0,T].$$
The above admits a unique adapted solution $(Y(\cd),Z(\cd))$, and
$$\rho(t,\xi)\equiv Y(t)=\g\ln\dbE\Big[e^{-{\xi\over\g}}\Bigm|\cF_t\Big]\deq e_{\g,t}(\xi),\q t\in[0,T],$$
is called a {\it dynamic entropic risk measure} for $\xi$.

\bs

Now, if we have an $\cF_T$-measurable wealth flow process $\psi(\cd)$ instead of just a terminal payoff $\xi$, then formally, the corresponding dynamic risk should be measured via the following parameterized BSDE:
$$Y(t,r)=-\psi(t)+\int_r^Tg(s,Y(t,s),Z(t,s))ds-\int_r^TZ(t,s)dW(s)), \q(r,t)\in\D[0,T],$$
and the current dynamic risk should be $Y(t;t)$. But, similar to the introduction section, simply taking $r=t$ in the above leads to the following:
$$Y(t,t)=-\psi(t)+\int_t^Tg(s,Y(t,s),Z(t,s))ds-\int_t^TZ(t,s)dW(s)), \q t\in[0,T],$$
which is not a closed form equation for the pair $(Y(t,t),Z(t,s))$ of processes. As we indicated in the introduction, $Y(t,r)$ above has some hidden {\it time-inconsistency} nature. One expects that the dynamic risk measure should be time-consistent. Namely, the value of the risk today (for a process $\psi(\cd)$) should match the one that one expected yesterday. Therefore, it is natural to use BSVIEs to describe/measure the dynamic risk of the process $\psi(\cd)$. We now make this precise.

\ms

We call $\psi(\cd)\in L^\i_{\cF_T}(0,T)$ a position process (a name borrowed from \cite{Riedel 2004}), and $\psi(t)$ could represent the total (nominal) value of certain portfolio process which might be a combination of certain (say, European type) contingent claims (which are mature at time $T$, thus they are usually only $\cF_T$-measurable), some current cash flows (such as dividends to be received, premia to be paid), positions of stocks, mutual funds, and bonds, and so on, at time the current time $t$.
Thus, the position process $\psi(\cd)$ is merely $\cF_T$-measurable (not necessarily $\dbF$-adapted). Now, mimicking Definition \ref{dynamic risk}, we introduce the following.

\begin{definition}\label{dynamic-risk-measures}\rm
A map $\rho:[0,T]\times L^\i_{\cF_T}(0,T)\to  L^\i_{\dbF}(0,T)$ is called an {\it equilibrium dynamic risk measure} if the following hold:

\ms

(i) (Past Independence) For any $\psi_1(\cd),\psi_2(\cd)\in L^\i_{\cF_T}(0,T)$, if
$$\psi_1(s)=\psi_2(s),\q\as,~\ae~s\in[t,T],$$
for some $t\in[0,T)$, then
$$\rho(t;\psi_1(\cd))=\rho(t;\psi_2(\cd)),\q\as$$

(ii) (Monotonicity) For any $\psi_1(\cd),\psi_2(\cd)\in L^\i_{\cF_T}(0,T)$, if
$$\psi_1(s)\les\psi_2(s), \q\as,~\ae~s\in[t,T],$$
for some $t\in[0,T)$, then
$$\rho(s;\psi_1(\cd))\ges\rho(s;\psi_2(\cd)),\q\as,~s\in[t,T].$$

(iii) (Translation Invariance) There exists a deterministic integrable function $r(\cd)$ such that for any $\psi(\cd)\in L^\i_{\cF_T}(0,T)$,
$$\rho(t;\psi(\cd)+c)=\rho(t;\psi(\cd))-ce^{\int_t^Tr(s)ds},\q\as,~t\in[0,T].$$

\no Further, $\rho$ is said to be {\it convex} if the following holds:

\ms

(iv) (Convexity) For any $\psi_1(\cd),\psi_2(\cd)\in L^\i_{\cF_T}(0,T)$ and $\l\in[0,1]$,
$$\rho(t;\l\psi_1(\cd)+(1-\l)\psi_2(\cd))\les\l\rho(t;\psi_1(\cd))
+(1-\l)\rho(t;\psi_2(\cd)),\q\as,~t\in[0,T].$$

\no And $\rho$ is said to be {\it coherent} if the following are satisfied:

\ms

(v) (Positive Homogeneity) For any $\psi(\cd)\in L^\i_{\cF_T}(0,T)$ and $\l>0$,
$$\rho(t;\l\psi(\cd))=\l\rho(t;\psi(\cd)),\q\as,~t\in[0,T].$$

(vi) (Subadditivity) For any $\psi_1(\cd),\psi_2(\cd)\in L^\i_{\cF_T}(0,T)$,
$$\rho(t;\psi_1(\cd)+\psi_2(\cd))\les\rho(t;\psi_1(\cd))
   +\rho(t;\psi_2(\cd)),\q\as,~t\in[0,T].$$
\end{definition}

The word ``equilibrium'' indicates the time-consistency of the risk measure $\rho$ which is some kind of modification of the naive one. Similar situation has happened in the study of time-inconsistent optimal control problems (see \cite{Yong 2012}). The meaning of each item is similar to the static case. In (iii), the function $r(\cd)$ is the riskless interest rate.

\ms

Let us now look at the following Type-I BSVIE:
\bel{appli-absvie}
Y(t)=-\psi(t)+\int_t^Tg(t,s,Y(s),Z(t,s))ds-\int_t^TZ(t,s)dW(s),\q t\in[0,T].
\ee
We have the following result.

\begin{proposition}\label{risk-measure-rho-trans} \sl
Let the generator be given by
$$g(t,s,y,z)\equiv r(s)y+g_0(t,s,z);(t,s,y,z)\in\D[0,T]\times\dbR\times\dbR,$$
satisfying {\rm\ref{A2}}, where $r(\cd)$ is a non-negative deterministic function. Then the following are true:
\begin{enumerate}[\rm(i)]
\item The map $\psi(\cd)\mapsto\rho(t;\psi(\cd))$ is translation invariant.

\item Suppose $z\mapsto g_0(t,s,z)$ is convex, so is $\psi(\cd)\mapsto\rho(t;\psi(\cd))$.

\item Suppose $z\mapsto g_0(t,s,z)$ is positively homogeneous and sub-additive, so is $\psi(\cd)\mapsto\rho(t;\psi(\cd))$.
\end{enumerate}
\end{proposition}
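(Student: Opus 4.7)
The plan is that each statement is a property of the unique adapted solution $(Y^\psi,Z^\psi)\in L^\i_\dbF(0,T)\times\cl{\rm BMO}(\D[0,T])$ of the BSVIE with free term $-\psi$, supplied by \autoref{thm-bsvie-1-d-exist-unique} under \ref{A2}; recall $\rho(t;\psi(\cd))=Y^\psi(t)$. Because $r(\cd)\ges0$, the generator $g(t,s,y,z)=r(s)y+g_0(t,s,z)$ is itself nondecreasing in $y$, so it qualifies as the intermediate $\bar g$ in \autoref{thm-comparison} and condition \ref{C} is free.

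For (i), I would verify directly that the pair
$$(\wt Y(t),\wt Z(t,s)):=\big(Y^\psi(t)-c\,e^{\int_t^T r(u)du},\,Z^\psi(t,s)\big)$$
solves the BSVIE with terminal $\psi+c$. Substituting and using the identity
$$c\,e^{\int_t^T r(u)du}=c+\int_t^T r(s)\[c\,e^{\int_s^T r(u)du}\]ds$$
(the integrated form of $\phi'=-r\phi$, $\phi(T)=c$) collapses the equation for $\wt Y$ onto the BSVIE satisfied by $(Y^\psi,Z^\psi)$. Uniqueness then gives $\rho(t;\psi+c)=\rho(t;\psi)-c\,e^{\int_t^T r(u)du}$.

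For (ii), set $(Y_i,Z_i):=(Y^{\psi_i},Z^{\psi_i})$, $Y^\l:=\l Y_1+(1-\l)Y_2$, $Z^\l:=\l Z_1+(1-\l)Z_2$, $\psi^\l:=\l\psi_1+(1-\l)\psi_2$. Linearity gives
$$Y^\l(t)=-\psi^\l(t)+\int_t^T\[r(s)Y^\l(s)+g_0(t,s,Z^\l(t,s))+h(t,s)\]ds-\int_t^T Z^\l(t,s)dW(s),$$
where, by convexity of $g_0$ in $z$,
$$h(t,s):=\l g_0(t,s,Z_1(t,s))+(1-\l)g_0(t,s,Z_2(t,s))-g_0(t,s,Z^\l(t,s))\ges0.$$
Let $(\wt Y,\wt Z):=(Y^{\psi^\l},Z^{\psi^\l})$. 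Subtracting the BSVIE for $(\wt Y,\wt Z)$ from that for $(Y^\l,Z^\l)$, linearizing the $g_0$-difference by $g_0(t,s,Z^\l)-g_0(t,s,\wt Z)=\th(t,s)(Z^\l-\wt Z)(t,s)$ with $|\th|\les L(1+|Z^\l|+|\wt Z|)$ exactly as in the proof of \autoref{thm-comparison-no-y}, and invoking \autoref{lemma-Girsanov} for the equivalent measure $\cl\dbP_t$ built from $\th(t,\cd)$ (which is BMO by the $\cl{\rm BMO}$-bounds on $Z^\l,\wt Z$), I take conditional expectation at $t$ to obtain
$$Y^\l(t)-\wt Y(t)=\dbE^{\cl\dbP_t}_t\[\int_t^T r(s)(Y^\l-\wt Y)(s)\,ds\]+\dbE^{\cl\dbP_t}_t\[\int_t^T h(t,s)\,ds\].$$
The last term is non-negative and $r(\cd)\ges0$ is deterministic, so a backward Gronwall iteration on subintervals $[t_{k-1},t_k]$ chosen so that $\int_{t_{k-1}}^{t_k}r(s)ds\les{1\over2}$ propagates $Y^\l\ges\wt Y$ from $T$ back to $0$; this is exactly $\rho(t;\psi^\l)\les\l\rho(t;\psi_1)+(1-\l)\rho(t;\psi_2)$.

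For (iii), positive homogeneity is immediate from uniqueness: for $\l>0$, $(\l Y^\psi,\l Z^\psi)$ solves the BSVIE for $\l\psi$ because $r(s)(\l Y^\psi)=\l(r(s)Y^\psi)$ and $g_0(t,s,\l z)=\l g_0(t,s,z)$ by positive homogeneity. Sub-additivity is the verbatim analogue of (ii) with slack $h(t,s):=g_0(t,s,Z_1(t,s))+g_0(t,s,Z_2(t,s))-g_0(t,s,(Z_1+Z_2)(t,s))\ges0$ supplied by sub-additivity of $g_0$. I expect the main technical obstacle to be that the augmented generator $g+h$ implicit in (ii) and in the sub-additivity half of (iii) does \emph{not} literally satisfy \ref{A2}---$h$ is quadratic in the BMO processes $Z_i$, hence not $\om$-bounded---so one cannot invoke \autoref{thm-comparison} as a black box. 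This is precisely why the linearization--Girsanov argument is run directly: $h$, being independent of $(y,z)$, passes unchanged through the change of measure as a harmless non-negative additive term, and the backward Gronwall iteration is the one place where careful bookkeeping is needed.
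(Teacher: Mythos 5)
Your proof is correct. The paper itself omits the argument, saying only that it follows from \autoref{thm-comparison} in the manner of \cite[Corollary 3.4, Proposition 3.5]{Yong 2007}, so your write-up is essentially an execution of the intended comparison-type route: part (i) by explicit verification plus uniqueness from \autoref{thm-bsvie-1-d-exist-unique}, parts (ii)--(iii) by exhibiting $\l Y_1+(1-\l)Y_2$ (resp.\ $Y_1+Y_2$) as the solution of a BSVIE whose generator exceeds that of $Y^{\psi^\l}$ by a non-negative slack $h$, and then comparing. Where you genuinely add value is in noticing that \autoref{thm-comparison} cannot be invoked verbatim here: the slack $h(t,s)$ is a random field of quadratic growth in the BMO kernels $Z_i(t,s)$, so the augmented generator violates \ref{A2} (and absorbing $\int_t^Th(t,s)\,ds$ into the free term leaves $L^\i_{\cF_T}(0,T)$). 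Your remedy---running the linearization--Girsanov step of \autoref{thm-comparison-no-y} directly on the difference and then closing the $r(s)y$ term by a backward iteration on subintervals with $\int r\les\tfrac12$ (legitimate since \ref{A2} forces $|r|\les L$, and the quantities iterated are in $L^\i_\dbF$)---is sound and is, in substance, what the citation to \cite{Yong 2007} is silently relying on in the Lipschitz setting. The only cosmetic caveat is that your conclusion holds a.s.\ for a.e.\ $t$, which matches the regularity available without \ref{A3}; under \ref{A3} and continuous free terms it upgrades to all $t$ exactly as in the second half of \autoref{thm-comparison}.
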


%
%
%
%
%
%

By \autoref{thm-comparison}, the proof of \autoref{risk-measure-rho-trans}
is very similar to \cite[Corollary 3.4, Proposition 3.5]{Yong 2007}, we omit them here. By \autoref{risk-measure-rho-trans}, we can construct a large class of equilibrium dynamic risk measures by choosing suitable generator $g(\cd)$ of BSVIE \rf{appli-absvie}. More precisely, we have the following result.
\begin{theorem}\label{risk-measure-main} \sl
Let the generator $g(t,s,y,z)\equiv r(s)y+g_0(t,s,z);(t,s,y,z)\in\D\times\dbR\times\dbR$ satisfy {\rm\ref{A2}},
where $r(\cd)$ is a non-negative deterministic function and
$z\mapsto g_0(t,s,z)$ is convex, then $\psi(\cd)\mapsto\rho(t;\psi(\cd))$ is an equilibrium dynamic convex risk measure. If $z\mapsto g_0(t,s,z)$ is positively homogeneous and sub-additive, then $\psi(\cd)\mapsto\rho(t;\psi(\cd))$ is an equilibrium dynamic coherent risk measure.
\end{theorem}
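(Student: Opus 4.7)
The plan is to verify the six items of \autoref{dynamic-risk-measures} in turn. \autoref{risk-measure-rho-trans} already delivers half of them: its part~(i) gives Translation Invariance~(iii) unconditionally, its part~(ii) gives Convexity~(iv) under the convexity hypothesis on $z\mapsto g_0(t,s,z)$, and its part~(iii) gives Positive Homogeneity~(v) together with Subadditivity~(vi) under the corresponding hypotheses. The remaining work therefore reduces to checking Past Independence~(i) and Monotonicity~(ii), and neither of these relies on the precise shape of $g_0$ in $z$.

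For Past Independence I would exploit the causality of the Type-I BSVIE \rf{appli-absvie}. Fix any $t_0\in[0,T)$. The restriction of that equation to $\{(t,s):t_0\les t\les s\les T\}$ is itself a Type-I QBSVIE on $[t_0,T]$ with terminal horizon $T$, the same generator $g$, and free term $-\psi|_{[t_0,T]}$; no term on the right-hand side couples back to $\psi$ or $(Y,Z)$ over $[0,t_0)$. \autoref{thm-bsvie-1-d-exist-unique} applied on $[t_0,T]$ guarantees a unique adapted solution to this restricted equation. Consequently, whenever $\psi_1(s)=\psi_2(s)$ a.s.\ for a.e.\ $s\in[t_0,T]$, the corresponding unique adapted solutions must coincide on $[t_0,T]$, giving $Y_1\equiv Y_2$ there and in particular $\rho(t_0;\psi_1)=\rho(t_0;\psi_2)$. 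This ``strong'' form of past-independence, namely $Y_1\equiv Y_2$ on the whole of $[t_0,T]$, will be reused in the next step.

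For Monotonicity, assume $\psi_1(s)\les\psi_2(s)$ a.s., a.e.\ $s\in[t_0,T]$. I would truncate by setting $\wt\psi_i(s)\deq\psi_i(s)\mathbf{1}_{[t_0,T]}(s)$, so that $\wt\psi_1\les\wt\psi_2$ on all of $[0,T]$ and $\wt\psi_i\equiv\psi_i$ on $[t_0,T]$. The generator $g(t,s,y,z)=r(s)y+g_0(t,s,z)$ satisfies \ref{A2} and, because $r(\cd)\ges0$, is nondecreasing in $y$, so Assumption~\ref{C} holds with $\bar g=g^1=g^2=g$. Applying \autoref{thm-comparison} to the two BSVIEs \rf{appli-absvie} driven by the free terms $-\wt\psi_2\les-\wt\psi_1$ yields $\wt Y_2(s)\les\wt Y_1(s)$ a.s., a.e.\ $s\in[0,T]$. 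Combining this with the strong past-independence above, which identifies $\wt Y_i$ with $Y_i$ on $[t_0,T]$, gives $\rho(s;\psi_2)\ges\rho(s;\psi_1)$ for a.e.\ $s\in[t_0,T]$, which is (ii).

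I do not anticipate any deep obstacle; the only real nuisances are the sign flip caused by the free term being $-\psi$ rather than $+\psi$ (which just swaps the roles of $\psi_1$ and $\psi_2$ when invoking the comparison theorem) and the need to ``pad'' the free terms on $[0,t_0)$ so that \autoref{thm-comparison}, stated globally on $[0,T]$, is directly applicable -- both addressed by the truncation above. The coherent case is proved by the same argument, using part~(iii) of \autoref{risk-measure-rho-trans} in place of part~(ii).
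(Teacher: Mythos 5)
Your proposal is correct and follows essentially the same route the paper intends: the paper dispatches Translation Invariance, Convexity, Positive Homogeneity and Subadditivity to \autoref{risk-measure-rho-trans} and declares the rest obvious, with Monotonicity resting on the comparison theorem (\autoref{thm-comparison}, applicable since $r(\cd)\ges0$ makes $y\mapsto g(t,s,y,z)$ nondecreasing) and Past Independence on the uniqueness/causality of the Type-I BSVIE restricted to $[t_0,T]$. You have merely written out the details (the truncation of the free terms and the sign flip from $-\psi$) that the paper leaves implicit.
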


From \autoref{risk-measure-rho-trans}, the proof of the above result is obvious. According to the above results, we can have some examples of equilibrium dynamic risk measures by the choices of $g_0(t,s,z)$: If
$$g_0(t,s,z)=\bar g(t,s)|z|,\qq\bar g(t,s)\ges0,$$
then, it is sub-additive and positively homogeneous in $z$. The
corresponding equilibrium dynamic risk measure is coherent. If
$$g_0(t,s,z)=\bar g(t,s)\sqrt{1+|z|^2},\qq\bar g(t,s)\ges0,$$
then, it is convex in $z$. The corresponding equilibrium dynamic risk measure is convex. If
$$g_0(t,s,z)=\bar g(t,s)|z|^2,\qq\bar g(t,s)\ges0,$$
then one has an entropy type equilibrium dynamic risk measure.

\section{Concluding Remarks}\label{remarks}

Recursive utility process (or stochastic differential utility process) and dynamic risk measures for terminal payoff can be described by the adapted solutions to proper BSDEs. For $\cF_T$-measurable position process $\psi(\cd)$, instead of the terminal payoff $\xi$, one could also try to find its recursive utility process and/or dynamic risk. One possibility is again to use BSDEs. However, one immediately finds that the resulting processes (recursive utility or dynamic risk measure) are kind of time-inconsistent nature. Type-I BSVIEs turn out to be a proper tool for describing them. This serves one of major motivations of studying BSVIEs. Recall from \cite{Yong 2006, Yong 2008}, we know that mathematical extension of BSDEs and optimal control of forward stochastic Volterra integral equations are other two motivations. To meet the needs for the equilibrium recursive utility processes and equilibrium dynamic risk measures, we have to allow the generator of the BSVIE to have a quadratic growth in $Z(t,s)$. We have developed a theory of Type-I QBSVIEs, including the well-posedness, regularity and a comparison theorem, etc. in this paper. As a byproduct, we also have obtained the well-posedness of Type-II QBSVIEs. Then a theory of equilibrium recursive utility and equilibrium dynamic risk measures are successfully established with the results of Type-I QBSVIEs.

\bs

\no\bf Acknowledgement. \rm The authors would like to thank two anonymous referees for their suggestive comments which leads to the current version of the paper.

\section{Appendix.}

In this appendix, we will sketch an argument supporting the BSVIE model for the equilibrium recursive utility process/equilibrium dynamic risk measure of a position process $\psi(\cd)$. The idea is adopted from \cite{Yong 2012}. Let $\psi(\cd)$ be a continuous $\cF_T$-measurable process.  Let $\Pi=\{t_k~|~0\les k\les N\}$ be a partition of $[0,T]$ with $0=t_0<t_1<...<t_{N-1}<t_N=T$. The mesh size of $\Pi$ is denoted by $\|\Pi\|\deq\ds\max_{0\les i\les N-1}|t_{i+1}-t_i|$. Let
$$\psi^\Pi(t)=\sum_{k=1}^N\psi_k{\bf1}_{(t_{k-1},t_k]}(t),$$
with
$$\psi_k=\psi(t_k)\in L^2_{\cF_T}(\Om;\dbR),\qq k=1,2,\cds,N.$$
We assume that
$$\lim_{\|\Pi\|\to0}\sup_{t\in[0,T]}\dbE|\psi^\Pi(t)-\psi(t)|^2=0.$$
We first try to specify the time-consistent recursive utility process for $\psi^\Pi(\cd)$, making use of BSDEs. Then let $\|\Pi\|\to0$ to get our BSVIE time-consistent recursive utility process model for $\psi(\cd)$.

\ms

For $\{\psi^\Pi(t)~|~t\in(t_{N-1},t_N]\}=\{\psi_N\}$, its recursive utility at $t\in[t_{N-1},t_N]$ is given by $Y^N(t)$, where $(Y^N(\cd),Z^N(\cd))$ is the adapted solution to the following BSDE:
\bel{BSDE(N)}Y^N(t)=\psi_N+\int_t^Tg(s,Y^N(s),Z^N(s))ds-\int_t^TZ^N(s)dW(s),\qq t\in[t_{N-1},t_N].\ee
Here, $g:[0,T]\times\dbR\times\dbR\to\dbR$ is an aggregator. Next, for $\{\psi^\Pi(t)~|~t\in(t_{N-2},t_N]\}$, the recursive utility at $t\in(t_{N-2},t_{N-1}]$ is denoted by $Y^{N-1}(t)$ and we should have
\bel{BSDE(N-1)}\ba{ll}
\ns\ds Y^{N-1}(t)=\psi_{N-1}+\int_{t_{N-1}}^Tg(s,Y^N(s),Z^{N-1}(s))ds+\int_t^{t_{N-1}}
g(s,Y^{N-1}(s),Z^{N-1}(s))ds\\
\ns\ds\qq\qq\qq-\int_t^TZ^{N-1}(s)dW(s),\qq t\in(t_{N-2},t_{N-1}].\ea\ee
Note that due to the time-consistent requirement, we have to use the already determined $Y^N(\cd)$ in the drift term over $[t_{N-1},T]$. On the other hand, since $\psi_{N-1}$ is still merely $\cF_T$-measurable, \rf{BSDE(N-1)} has to be solved in $[t,T]$ although $t\in(t_{N-2},t_{N-1}]$. Hence, in the martingale term, $Z^{N-1}(\cd)$ has to be free to choose over the entire $[t_{N-2},T]$ and the already determined $Z^N(\cd)$ cannot be forced to use there (on $[t_{N-1},T]$). Whereas, in the drift term over $[t_{N-1},T]$, it seems to be fine to either use already determined $Z^N(\cd)$ or to freely choose $Z^{N-1}(\cd)$, since the time-inconsistent requirement is not required for $Z$ part. However, we use $Z^{N-1}(\cd)$ in the drift, which will enable us to avoid a technical difficulty for BSVIEs later.

\ms

Similarly, the recursive utility on $(t_{N-3},t_{N-2}]$ should be
$$\ba{ll}
\ns\ds Y^{N-2}(t)=\psi_{N-2}+\int_{t_{N-1}}^Tg(s,Y^N(s),Z^{N-2}(s))ds+\int_{t_{N-2}}^{t_{N-1}}
g(s,Y^{N-1}(s),Z^{N-2}(s))ds\\
\ns\ds\qq\qq\qq+\int_t^{t_{N-2}}g(s,Y^{N-2}(s),Z^{N-2}(s))ds-\int_t^TZ^{N-2}(s)dW(s),
\q t\in(t_{N-3},t_{N-2}].\ea$$
This procedure can be continued inductively. In general, we have
\begin{align*}
\ns\ds Y^k(t)&=\psi_k+\sum_{i=k+1}^N\int_{t_{i-1}}^{t_i}g(s,Y^i(s),Z^k(s))ds
+\int_t^{t_k}g(s,Y^k(s),Z^k(s))ds\\
&\qq-\int_t^TZ^k(s)dW(s), \q t\in(t_{k-1},t_k].
\end{align*}
Let us
%
%
 denote
$$Y^\Pi(t)=\sum_{k=1}^NY^k(t){\bf1}_{(t_{k-1},t_k]}(t),\qq Z^\Pi(t,s)=\sum_{k=1}^NZ^k(s){\bf1}_{(t_{k-1},t_k]}(t).$$
Then
$$Y^\Pi(t)=\psi^\Pi(t)+\int_t^Tg(s,Y^\Pi(s),Z^\Pi(t,s))ds-\int_t^TZ^\Pi(t,s)dW(s),\q
t\in[0,T].$$
Let $\|\Pi\|\to0$, by the stability of adapted solutions to BSVIEs (\cite{Yong 2008}), we obtain
\bel{BSVIE*}Y(t)=\psi(t)+\int_t^Tg(s,Y(s),Z(t,s))ds-\int_t^TZ(t,s)dW(s),\q t\in[0,T],\ee
which is the BSVIE that we expected. Moreover, it is found that if $Y(\cd)$ is a  utility process for $\psi(\cd)$, the current utility $Y(t)$ depends on the (realistic) future utilities $Y(r);\,t\les r\les T$, which is the main character of recursive utility process. Finally, we note that if we restrict $Z^{N-1}(\cd)$ on $[t_{N-1},T]$ in \rf{BSDE(N-1)}, etc., then we will end up with the following BSVIE:
$$Y(t)=\psi(t)+\int_t^Tg(s,Y(s),Z(s,s))ds-\int_t^TZ(t,s)dW(s),\q t\in[0,T],$$
which is technically difficult since in general, $s\mapsto Z(s,s)$ is not easy to define.

\ms

Finally, we would like to point out a fact about BSVIEs and BSDEs. Let us first look at the following general BSDE:
\bel{BSDE*}Y(t)=\xi+\int_t^Tg(s,Y(s),Z(s))ds-\int_t^TZ(s)dW(s),\qq t\in[0,T].\ee
Under standard conditions, for any $\xi$ in a proper space, the above BSDE admits a unique solution $(Y(\cd),Z(\cd))\equiv(Y(\cd\,;T,\xi),Z(\cd\,;T,\xi))$. By the uniqueness of adapted solutions of BSDEs, we have
$$Y(t;T,\xi)=Y(t;\t,Y(\t;T,\xi)),\q Z(t;T,\xi)=Z(t;\t,Y(\t;T,\xi)),\qq\forall0\les t<\t\les T.$$
This can be referred to as a (backward) {\it semi-group property} of BSDEs (\cite{Peng 1997}). However, there is no way to talk about the (backward) semi-group property for BSVIEs. To illustrate this point, let us look at the following simple BSVIE:
$$Y(t)=tW(T)-\int_t^TZ(t,s)dW(s),\qq t\in[0,T].$$
We can directly check that the adapted solution is given by
$$Y(t)=tW(t),\q Z(t,s)=t,\qq(t,s)\in\D[0,T].$$
We see that the above $Y(\cd)$ really could not be related to any (backward) semi-group property. The point that we want to make is that time-consistency and semi-group property are irrelevant.

\end{document}